\documentclass[11pt]{article}
\usepackage{diagbox}
\usepackage{mathrsfs}
\usepackage[enableskew]{youngtab}
\usepackage{amsfonts,amsmath,dsfont,amssymb,amsthm,stmaryrd,bbm,bm,color,comment,graphicx,aligned-overset}
\usepackage{soul}
\usepackage[hidelinks]{hyperref}
\usepackage{multirow}
\usepackage[T1]{fontenc}

\setlength{\topmargin}{-0.45 in}     
\setlength{\oddsidemargin}{0.24in}  
\setlength{\evensidemargin}{0.24in}
\setlength{\textheight}{9in}
\setlength{\textwidth}{6.1 in}
\setlength{\footskip}{0.55in}

\makeatletter
\newtheorem*{rep@theorem}{\rep@title}
\newcommand{\newreptheorem}[2]{%
\newenvironment{rep#1}[1]{%
 \def\rep@title{#2 \ref{##1}}%
 \begin{rep@theorem}}%
 {\end{rep@theorem}}}
\makeatother

\newtheorem{thm}{Theorem}[section]
\newreptheorem{thm}{Theorem}

\newtheorem{prop}[thm]{Proposition}
\newreptheorem{prop}{Proposition}

\newtheorem{defn}[thm]{Definition}

\newtheorem{lem}[thm]{Lemma}
\newreptheorem{lem}{Lemma}

\newtheorem{coro}[thm]{Corollary}
\newreptheorem{coro}{Corollary}

\newtheorem{rmk}{Remark}


\newcommand{\Var}{\mathrm{Var}}

\newcommand{\R}{\mathbb{R}}
\newcommand{\N}{\mathbb{N}}
\newcommand{\Z}{\mathbb{Z}}

\newcommand{\E}{\mathbb{E}}

\def\P{{\mathbb P}}

\newcommand{\la}{\lambda}

\def\O{\mathcal{O}}
\DeclareMathOperator{\co}{d}


\begin{document}
\title{Asymptotic Normality and Concentration Inequalities of Statistics of Core Partitions with Bounded Perimeters}

\author{Jiange Li, Yetong Sha\footnote{Corresponding author.} ~and Huan Xiong\\ ~ \\
Institute for Advanced Study in Mathematics,
   Harbin Institute of Technology\\
   Heilongjiang 150001, P.R. China\\ ~ \\
   Email:    
jiange.li@hit.edu.cn, dellenudaubg@gmail.com, 
huan.xiong.math@gmail.com}
\date{ }
\maketitle

\begin{abstract}
    Core partitions have attracted much attention since Anderson's work (2002) on the number of $(s,t)$-core partitions for coprime $s,t$. Recently, there has been a growing interest in studying the limiting distributions of the sizes of random simultaneous core partitions. In this paper, we prove the asymptotic normality of certain statistics of uniform random core partitions with bounded perimeters in the Kolmogorov and Wasserstein $W_1$ distances, including the length and size of a random (strict) $n$-core partition, the length of the Durfee square and the size of a random self-conjugate $n$-core partition. Accordingly, we prove that these statistics are subgaussian. This contrasts with the asymptotic behavior of the size of a random $(s, t)$-core partition for coprime $s,t$ studied by Even-Zohar (2022), which converges in law to Watson's $U^2$ distribution. Our results show that the distribution of the size of a random strict $(n, dn+1)$-core partition is asymptotically normal when $d \ge 3$ is fixed and $n$ tends to infinity, which is an analog of Zaleski's conjecture (2017) and covers Koml\'{o}s, Sergel, and Tusn\'{a}dy's result (2020) as a special case. Our proof integrates a variety of combinatorial and probabilistic tools, including Stein's method based on Hoeffding decomposition, Hoeffding's combinatorial central limit theorem, the Efron-Stein inequalities on product spaces and slices, and asymptotics of P\'{o}lya frequency sequences. Furthermore, our approach is potentially applicable to the study of the asymptotic normality of functionals of random variables with certain global dependence structures that can be decomposed into appropriate mixture forms.
\end{abstract}

\emph{Keywords}:
Core partitions; Normal approximation; Concentration inequalities; Hoeffding decomposition; Stein's method. \\

\tableofcontents

\section{Introduction}

\subsection{Background}

A \emph{partition} is a finite non-increasing sequence of positive integers $\lambda = (\lambda_1, \lambda_2, \cdots, \lambda_\ell)$ such that $\lambda_1 \ge \lambda_2 \ge \cdots \ge \lambda_\ell > 0$. Here, $\ell = \ell (\lambda)$ is called the \emph{length}, $\{\lambda_i\}_{i=1}^\ell$ are the \emph{parts} and $\left| \lambda \right| := \sum_{1 \leq i \leq \ell}^{} \lambda_i$ is the \emph{size} of $\lambda$. We say that $\lambda$ is \emph{strict} or with distinct parts if $\lambda_1 > \lambda_2 > \cdots > \lambda_\ell$. Each partition $\lambda$ can be visualized by its \emph{Young diagram}, which is an array of boxes arranged in left-justified rows with $\lambda_i$ boxes in the $i$-th row. The length of the \emph{Durfee square} of $\lambda$ is the largest integer $k$ such that $\lambda$ contains at least $k$ parts that are larger than or equal to~$k$ (see \cite{andrews2004integer}). The \emph{conjugate} of $\lambda$ is obtained by flipping the Young diagram of $\lambda$ along its main diagonal, i.e., turning the rows into columns. We call $\lambda$ a \emph{self-conjugate} partition if it is conjugate to itself. 

For each box $\square=(i,j)$ in the $i$-th row and the $j$-th column of the Young diagram of $\lambda$, its \emph{hook length} $h_\square=h_{ij}$ is defined to be the total number of boxes which are either directly to the right of or directly below the box together with the box itself. The maximal hook length $h_{11} = \lambda_1 + \ell - 1$ is called the \emph{perimeter} of $\lambda$. The \emph{$\beta$-set} $\beta (\lambda)$ is defined as the set of all hook lengths of the boxes in the first column of the Young diagram of $\lambda$. For example, the Young diagram and hook lengths of the partition $(6,3,2,1)$ are given in Fig. \ref{fig:1}. Thus $(6,3,2,1)$ has length $4$, size $12$, perimeter $9$, and the $\beta$-set $\beta (\lambda) = \{9, 5, 3, 1\}$.

The distributions of certain statistics of random partitions under different measures have been widely studied \cite{BaikDeiftJohansson1999, BaikRains2001, BorodinOkounkovOlshanski2000, DF16, DS17,Ivanov2006,IvanovOlshanski2002, Kessler1976,LoganShepp1977,Matsumoto,Mel10,NekrasovOkounkov2006, OkounkovRandomPartitions, okounkov2003correlation, KerovVershikLimitYD}. For example, let $\lambda$ be a uniform random partition of size $n$. Kessler and Livingston \cite{Kessler1976} showed that the average length of $\la$ satisfies
$$
\mathbb{E}[\ell(\la)] = \frac{\sqrt{6}}{2\pi} \sqrt{n} \log n + \O(\sqrt{n}).
$$
Let $\co(\la)$ be the number of distinct parts of $\la$. Goh and Schmutz \cite{goh1995number} showed that $\co(\la)$ satisfies the following asymptotic normality
$$
  \lim_{n\to\infty} \mathbb{P}\biggl(\co(\la) \leq (\sqrt{6}/\pi) \sqrt{n}+  \sqrt{  \sqrt{6}/2\pi - \sqrt{54}/\pi^3  } \cdot n^{1/4} x\biggr) = \Phi(x),
$$
where $\Phi(x)$ is the cumulative distribution function of the standard normal distribution.

A partition $\lambda$ is called an \emph{$s$-core partition} if the set of its hook lengths does not contain any multiples of $s$. Furthermore,  a partition is called an \emph{$(s_1,s_2,\cdots, s_m)$}-core partition if it is simultaneously an $s_1$-core, an $s_2$-core, $\cdots$, and an $s_m$-core partition. For example, we can see from Fig. \ref{fig:1} that $(6,3,2,1)$ is a  $(4,6,11)$-core partition. For $s \ge 2$, there are infinitely many $s$-core partitions. Anderson \cite{A02} showed that for coprime $s, t$, there are finite $(s, t)$-core partitions.

\begin{figure}[htbp] \label{6321}
    \begin{center}
    $\young(975321,531,31,1)$
    \end{center}
    \caption{The Young diagram and hook lengths of the partition
    $(6,3,2,1)$.} \label{fig:1}
\end{figure}

Core partitions arise naturally in the study of modular representation theory and algebraic combinatorics. For example, core partitions label the blocks of irreducible characters of symmetric groups (see \cite{OS07}). Hugh and Nathan \cite{HN18} connect simultaneous core partitions with rational combinatorics. Also, simultaneous core partitions are closely related to Motzkin paths and Dyck paths (see \cite{CH22,CHS20,  YYZ21}).

Some families of simultaneous core partitions, such as strict $(s, ds \pm 1)$-core partitions \cite{A12,NS16,S16,X18a,X18b,XZ21,Z17,Z19}, self-conjugate core partitions \cite{CHW16,FMS09,W16}, $(t, t + 1, \cdots, t + p)$-core partitions \cite{A12,AL15,X16,YZZ15} and $(s, ms - 1, ms + 1)$-core partitions \cite{NS16, SX23}, and their statistics have been paid much attention to in recent years. 

Their distributions have also been studied. The average size of $(s, t)$-core partitions has been studied in \cite{AHJ14,EZ15,J18,SZ15,W16}. Variances and higher moments have been computed in \cite{EZ15,TW17}. Notably, Even-Zohar \cite{E22} proved that for coprime $s, t$, the total size of a random $(s,t)$-core partition converges in law to Watson's $U^2$ distribution, proving Zeilberger's conjecture in \cite{EZ15,ZZ17}.

Furthermore, Zaleski \cite{Z17,Z19} conjectured that when $d$ is fixed and $n$ tends to infinity, the size of a uniform random $(n,dn-1)$-core partition \emph{with distinct parts} converges in law to a normal random variable. Koml\'{o}s, Sergel and Tusn\'{a}dy \cite{KST20} proved the $d = 1$ case of the conjecture with a Berry-Esseen bound. 
Xiong and Zang \cite{XZ21} computed the asymptotic formulas of moments in the general case and showed strong evidence of the validity of Zaleski's conjecture.

Motivated by the above results, in this paper, we study the distribution of the length and size of a random $n$-core partition, random strict $n$-core partition, and random self-conjugate $n$-core partition with bounded perimeters, and prove that these distributions are all asymptotically normal when $n$ tends to infinity, with explicit convergence rates in the Kolmogorov and Wasserstein $W_1$ distances. This behavior contrasts with the asymptotic distribution of the size of a random $(s, t)$-core partition for coprime $s,t$ studied by Even-Zohar \cite{E22}. Our work covers Koml\'{o}s, Sergel and Tusn\'{a}dy's result \cite{KST20} and shows that the size of a uniform random, strict $(n, dn+1)$-core partition is asymptotically normal when $n$ tends to infinity, which is an analog of Zaleski's conjecture \cite{Z19}. 

\subsection{Notations}
This subsection contains the notations that will be used throughout the paper. 

\begin{itemize}
    \item For functions $f, g: \N \to \R_+$, the notation $f \lesssim g$ means that there is an absolute constant $C$ such that $f(n) \leq C g(n) $ for all $n\in\N$.
\item For any positive integers $n$ and $k$, we set $[n] := \{1, 2, \cdots, n\}$ and denote by $\binom{[n]}{k}$ the family of all $k$-subsets of $[n]$.
    \item For an index set $I = \{x_1, \cdots, x_k\} \subset \N$ with $x_1 < \cdots < x_k$, we define the set $\tau(I)$ as 
    \begin{equation}\label{eq:tau-action}
    \tau (I) := \{x_1, x_2 + 1, \cdots, x_k + k - 1\}.
    \end{equation}
     \item For an index set $I \subset [n]$, we define the set $I^{(n)}$ as follows
    \begin{equation}\label{eq:I^n}
    I^{(n)} := \{(x_1, \cdots, x_n): x_i=0~\text{if}~i\notin I,~\text{and}~ x_i\in [d_n]~\text{if}~i\in I\}.
    \end{equation}
    The parameter $d_n$, used in the paper, is given in Section \ref{Sec1.3}.
    \item Let $X$ be a real-valued random variable. We denote by $\mathcal{L}(X)$ the distribution of $X$. We write $\mu_X:=\E [X]$ and $\sigma_X:= \sqrt{\text{Var} \left( X \right)}$ for the expectation and standard deviation, respectively.
Provided $\sigma_X > 0$, we define the normalized version of $X$ as 
\begin{equation}\label{eq:normalization}
\widehat{X}=\frac{X-\mu_X}{\sigma_X}.
\end{equation}
    \item We denote by $\mathcal{N}\left( 0, 1 \right)$ the standard normal distribution. The corresponding probability density function $\varphi (x) $ and cumulative distribution function $\Phi (x)$ are given by
    \begin{align*}
        \varphi (x) = \frac{1}{\sqrt{2 \pi}} e ^ {- x^2 / 2}, ~~~ \text{and} ~~~ \Phi (x) = \int_{- \infty}^{x} \varphi (t) dt.
    \end{align*}
    \item We define $\mathcal{B}_n := \{0, 1, \cdots, d_n\}^{n - 1}$ and 
    \begin{equation}\label{eq:SBn}
    \mathcal{SB}_n := \big\{(x_1, x_2, \cdots, x_{n - 1}) \in \mathcal{B}_n : x_i x_{i + 1} = 0\ \text{for}\ 1 \leq i \leq n - 2\big\}.
    \end{equation}
    One can check that $|\mathcal{SB}_n|=\sum_{0\leq k\leq \lfloor n/2\rfloor} {n-k \choose k}d_n^k$.
    \item We denote by $U$ a random variable taking values in $\{0, 1, \cdots, \lfloor n/2\rfloor\}$ with distribution 
    \begin{equation}\label{eq:rv-U}
    \mathbb{P}(U=k)=\frac{{n-k \choose k}d_n^k}{|\mathcal{SB}_n|}.
    \end{equation}
    \item For a general set $S$, we denote by $\mathcal{U} (S)$ the uniform distribution on $S$.
    \item We denote by $\{\gamma_i\}_{i=1}^\infty$ the moment sequence of the uniform distribution $\mathcal{U}(S)$, where 
    $S=\left\{- \frac{ d_n - 1 }{ 2 }, - \frac{ d_n - 3 }{ 2 }, \cdots, \frac{ d_n - 1 }{ 2 }\right\}$; that is, 
    \begin{equation}\label{eq:gamma_i}
    \gamma_i=\frac{1}{d_n}\sum_{x\in S}x^i.
    \end{equation}
    \item Let $\mu$ and $\nu$ be probability distributions on  $\R$. Let $F$ and $G$ be the corresponding cumulative distribution functions, i.e., $F(x)=\mu((-\infty, x])$ and $G(x)=\nu((-\infty, x])$. The Kolmogorov distance between $\mu$ and $\nu$ is defined as
    \begin{align}\label{eq:K-dist}
        d_K (\mu, \nu) :=\sup_{x\in\R} |F(x)-G(x)|, 
    \end{align}
    and the Wasserstein $W_1$ distance between $\mu$ and $\nu$ is defined as
    \begin{align}\label{eq:W-dist}
        d_W (\mu, \nu) = \int_{0}^{1} \left| F^{-1} (t) - G^{-1} (t) \right| dt = \int_{\R}^{} \left| F(x) - G(x) \right| dx.
    \end{align}
    For brevity, we write $d_{W/K}(\mu, \nu):=\max\{d_W(\mu, \nu),d_K(\mu, \nu)\}$.
\end{itemize}

\subsection{Main results} \label{Sec1.3}  




We denote by $L_{n, 1}$ and $S_{n, 1}$ the length and size of a uniform random $n$-core partition with perimeter at most $D_n$. Similarly, let $L_{n, 2}$ and $S_{n, 2}$ denote respectively the length and size of a uniform random strict $n$-core partition with perimeter at most $D_n$. Recall that the length of the Durfee square of a partition is the largest integer $k$ such that the partition contains at least $k$ parts that are larger than or equal to~$k$.
We write $L_{n, 3}$ and $S_{n, 3}$ for the length of the Durfee square and the size of a uniform random self-conjugate $n$-core partition with perimeter at most $E_n$. \emph{Unless otherwise specified, we assume that $D_n$ is divisible by $n$, $E_n$ is divisible by $2 n$ and set $d_n = D_n/n$ and $e_n =E_n/(2 n)$. For simplicity, we allow the partition to be empty.} In this paper, we prove the following asymptotic normality and concentration inequalities of $\{L_{n, i}\}_{i=1}^3$ and $\{S_{n, i}\}_{i=1}^3$.

\begin{thm} \label{thm:AsymptoticNormalityCorePartition}
For sufficiently large $n$, we have
\begin{align*}
    d_{W / K} \left( \mathcal{L} ( \widehat{L_{n, i}} ), \mathcal{N}\left( 0, 1 \right)  \right)
    & \lesssim \frac{ 1 }{ \sqrt{ n } }, ~~ i = 1, 2, 3; \\
    d_{W / K} \left( \mathcal{L} ( \widehat{S_{n, i}} ), \mathcal{N}\left( 0, 1 \right)  \right)
    & \lesssim \frac{ 1 }{ \sqrt{ n } }, ~~ i = 1, 3.
\end{align*}
When $d_n \ge 3$, we have
\begin{align*}
    d_{W / K} \left( \mathcal{L} ( \widehat{S_{n, 2}} ), \mathcal{N}\left( 0, 1 \right)  \right)
    & \lesssim \frac{ 1 }{ \sqrt{ n } }.
\end{align*}
\end{thm}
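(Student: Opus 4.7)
The plan is to first establish combinatorial bijections that encode each of the three families of core partitions with bounded perimeter as random vectors with tractable distributions. Via the abacus/$\beta$-set description, uniform random $n$-core partitions with perimeter at most $D_n = d_n n$ correspond to $\mathcal{B}_n = \{0,1,\ldots,d_n\}^{n-1}$ under its uniform measure, and strict $n$-cores with the same perimeter bound correspond to the sparse subset $\mathcal{SB}_n$ of \eqref{eq:SBn} (the distinct-parts condition forces no two consecutive abacus coordinates to both be nonzero). A parallel symmetric construction handles the self-conjugate family, again producing a product measure with parameter $e_n = E_n / (2n)$. Under each encoding, the statistics of interest become low-degree polynomials in the coordinates: the lengths $L_{n,i}$ are essentially linear statistics counting (with simple weights) the nonzero entries, while the sizes $S_{n,i}$ are essentially quadratic statistics with position-dependent coefficients determined by the shift map $\tau$ of \eqref{eq:tau-action}.

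For the two product-measure cases ($i = 1$ and $i = 3$), the coordinates are i.i.d.\ with the moment sequence $\{\gamma_i\}$ of \eqref{eq:gamma_i}, so each length is a sum of i.i.d.\ bounded random variables and each size is a quadratic form in them. The length estimate is then essentially a Berry--Esseen bound (or a direct application of Stein's method), giving the $n^{-1/2}$ rate in both Kolmogorov and Wasserstein distances. For the size, I would split the quadratic form via its Hoeffding decomposition into a linear part plus an orthogonal second-order kernel, bound the Stein discrepancy for the linear part directly, and control the second-order remainder using the Efron--Stein inequality on the product space, thereby verifying that the linear part dominates the variance and contributes the full $n^{-1/2}$ rate.

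The strict-core case ($i = 2$) is the main difficulty because the uniform measure on $\mathcal{SB}_n$ is not a product measure. The route here is the mixture form visible in the definition of $U$ in \eqref{eq:rv-U}: conditional on the support $J \subset [n-1]$ of the random vector in $\mathcal{SB}_n$ (with $|J| = U$), the nonzero coordinates are i.i.d.\ uniform on $[d_n]$, and $J$ is uniform among the $\binom{n-|J|}{|J|}$ sparse subsets of $[n-1]$ of that size. I would (i) establish concentration and a Gaussian limit for the mixing variable $U$ via asymptotics of P\'olya frequency sequences applied to the generating polynomial $\sum_k \binom{n-k}{k} d_n^k z^k$; (ii) derive conditional normal-approximation rates on each slice, in which the randomness of the sparse support is handled by Hoeffding's combinatorial CLT (after transporting, via $\tau$, a uniform sparse subset to a uniform ordinary subset of the appropriate size), while the randomness of the nonzero values on the resulting product slice is handled by the Stein/Hoeffding-decomposition/Efron--Stein package from the previous paragraph; and (iii) combine the mixing-variable CLT with the conditional bounds through the integral representation of the Wasserstein and Kolmogorov distances.

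The hardest step is controlling the variance of $S_{n,2}$ from below, which is where the hypothesis $d_n \ge 3$ enters. The size is a quadratic statistic whose variance receives contributions from both the Bernoulli-type indicator of ``position active'' and the uniform value on $[d_n]$, and for small $d_n$ these contributions can partially cancel under the covariance induced by the sparsity constraint; the condition $d_n \ge 3$ is used precisely to rule out this cancellation and to secure a lower bound on $\sigma_{S_{n,2}}^2$ of the expected polynomial order in $n$. Once this is in hand, the Hoeffding-decomposition/Stein's-method package propagates through the mixture to give the stated $n^{-1/2}$ rate. The length $L_{n,2}$ requires no such restriction because it depends only on the indicator structure of the support, to which the mixture argument applies without difficulty.
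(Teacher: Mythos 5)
Your overall architecture is essentially the paper's: the $\beta$-set and main-diagonal encodings, direct normal approximation in the (block-)independent cases $i=1,3$, and, for the strict case $i=2$, the decomposition of the uniform measure on $\mathcal{SB}_n$ as a mixture over supports, with Stein's method via Hoeffding decomposition for the conditional laws on each slice, Hoeffding's combinatorial CLT for the random support (transported by $\tau$), Efron--Stein/Bobkov inequalities to concentrate conditional variances, and the P\'olya-frequency (sum of independent Bernoullis) representation of the mixing variable $U$. The one substantive problem is your account of the hypothesis $d_n\ge 3$, which you single out as the hardest step. It is not needed to secure a lower bound on $\sigma_{S_{n,2}}^2$, and there is no cancellation to rule out: even for $d_n\in\{1,2\}$ the law of total variance already gives a contribution of the full order $n^3 d_n^4$ from the fluctuation of the conditional means $\mu_k\approx Ak+Bk^2$ (with $|A|\asymp n d_n^2$ and no sign cancellation over the range of $U$) as the support size varies. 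What $d_n\ge 3$ actually buys is conditional, per-coordinate nondegeneracy: for $X$ uniform on the centered $d_n$-point set, $\Var(X^2)=(d_n^2-1)(d_n^2-4)/180$ vanishes when $d_n\le 2$, so within a slice the quadratic part of each $g_i$ is constant and $g_i$ is affine in $X$. Then Conditions \ref{cond:NearIndependence} and \ref{cond:Nondegenerate} of Theorem \ref{thm:normalgeneral} fail: the within-slice interaction term $-\sum_{i<j}X_iX_j$, whose variance is of order $n^2d_n^4$, is no longer dominated by $\inf_i\Var(g_i(X))$, and the slice-level Stein/Hoeffding bound you invoke in your step (ii) breaks down. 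So the genuine difficulty sits inside the conditional analysis, not in a global variance estimate, and the repair you propose would not address it.

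Two smaller inaccuracies. The self-conjugate encoding is not a coordinate-wise product measure: $\mathcal{MD}_n$ imposes $x_i x_{n+1-i}=0$, so what is true is that the pairs $(X_i,X_{n+1-i})$ are independent; moreover $S_{n,3}$ and $L_{n,3}$ contain no cross terms, so they are sums of independent blocks and a single application of the Stein/Hoeffding bound with a variance lower bound suffices — no Efron--Stein remainder control or mixture machinery is needed there. Finally, for $L_{n,2}$ with $d_n=1$ the mixture degenerates ($L_{n,2}=U$ itself), and one needs the Bernoulli-sum representation of $U$ directly rather than the general slice argument; your proposal does not flag this boundary case.
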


\begin{thm} \label{thm:ConcentrationCorePartition}

    There exists an absolute constant $C > 0$ such that the following results hold. For any $r > 0$ and $i = 1, 2$, we have
    \begin{gather}
        \mathbb{P} \left( \left| L_{n, i}  
        - \mathbb{E} \left[ L_{n, i}  \right] \right| \ge r \right) 
        \leq 2 \exp \left( - C \frac{r ^ 2}{n d_n^2} \right),\label{eq:concentilln} \\
        \mathbb{P} \left( \left| S_{n, i} - \mathbb{E} \left[ S_{n, i} \right] \right| \ge r  \right)
        \leq 2 \exp \left( - C \frac{ r ^ 2 }{ n ^ 3 d_n ^ 4 } \right). 
        \label{eq:concentilsn}
    \end{gather}For any $r > 0$, we have
    \begin{gather}
        \mathbb{P} \left( \left| L_{n, 3} - \mathbb{E} \left[ L_{n, 3} \right] \right| \ge r  \right)
        \leq 2 \exp \left( - C \frac{ r ^ 2 }{ n e_n ^ 2 } \right) ,\label{eq:concentilln2} \\
        \mathbb{P} \left( \left| S_{n, 3} - \mathbb{E} \left[ S_{n, 3} \right] \right| \ge r \right)
        \leq 2 \exp \left( - C \frac{ r ^ 2 }{ n ^ 3 e_n ^ 4 } \right) \label{eq:concentilsn2}.
    \end{gather}

\end{thm}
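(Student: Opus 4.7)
The plan is to prove the subgaussian concentration by combining McDiarmid's bounded differences inequality on a product-space representation with a mixture decomposition that absorbs the non-adjacency constraint built into $\mathcal{SB}_n$. I will assume that (as developed earlier in the paper) each of the three families of random core partitions has been realized as a uniformly random element of $\mathcal{SB}_n$ (with parameter $d_n$ for $i=1,2$, and an analogous space with parameter $e_n$ for the self-conjugate case $i=3$) via a suitable $\beta$-set or abacus parametrization, so that $L_{n,i}$ and $S_{n,i}$ become explicit functions $F : \mathcal{SB}_n \to \mathbb{R}$ of the random vector $X$. The proof then reduces to subgaussian tails for $F(X) - \mathbb{E}[F(X)]$ with second-moment parameter $n d_n^2$ for length-type statistics and $n^3 d_n^4$ for size-type statistics.

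The core observation is that, conditional on the support $I := \{ i : x_i \neq 0 \} \subset [n-1]$, the random vector $X$ is uniform on the product slice $I^{(n)} \cong [d_n]^{|I|}$ defined in \eqref{eq:I^n}; equivalently, the non-zero coordinates of $X$ are i.i.d.\ uniform on $[d_n]$. On each such slice McDiarmid's inequality applies directly. The crucial combinatorial input is a single-coordinate Lipschitz estimate: changing one coordinate $x_i$ within $[d_n]$ shifts $F$ by at most $O(d_n)$ for $L_{n,i}$ and by at most $O(n d_n)$ for $S_{n,i}$. These bounds come from tracking how a bead move in the associated abacus perturbs the $\beta$-set and hence the length, Durfee side, or size of the resulting partition, the extra factor of $n$ for size reflecting that individual hook lengths are bounded by $D_n = n d_n$. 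Plugging these constants together with $|I| \leq n$ into McDiarmid yields conditional tails of the form $2 \exp(- C r^2 / (n d_n^2))$ and $2 \exp(- C r^2 / (n^3 d_n^4))$, uniformly in $I$.

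It remains to control the fluctuation of the conditional mean $\mu(I) := \mathbb{E}[F(X) \mid I]$ as $I$ varies. Here I would use the Efron-Stein inequality on the slice structure of $\mathcal{SB}_n$, combined with the mixture representation via $U$ from \eqref{eq:rv-U}: swapping an occupied for a neighboring empty site in $I$ shifts $\mu(I)$ by at most the same $O(d_n)$ or $O(n d_n)$ amount in expectation, which produces subgaussian bounds for $\mu(I) - \mathbb{E}[F(X)]$ of the same order. Writing $F(X) - \mathbb{E}[F(X)] = (F(X) - \mu(I)) + (\mu(I) - \mathbb{E}[F(X)])$ and applying a union bound on $\{ |\cdot | \geq r/2 \}$ to each piece gives the claimed tails, with $e_n$ replacing $d_n$ throughout in the self-conjugate case $i=3$.

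The main technical obstacle is verifying the single-coordinate Lipschitz estimate cleanly for each of the three families. For the unrestricted $n$-core case this follows directly from the abacus bead-shift rule, but the strict and self-conjugate variants require additional care: one must check that an admissible update of $x_i$ induces a \emph{local} $\beta$-set perturbation (touching only $O(1)$ hook lengths, each by at most $O(n d_n)$ or $O(n e_n)$) that remains compatible with the distinct-parts or symmetry constraint. Once this combinatorial step is in place, the Efron-Stein-on-slices input and the McDiarmid aggregation are standard and directly produce the absolute constant $C$ in \eqref{eq:concentilln}--\eqref{eq:concentilsn2}.
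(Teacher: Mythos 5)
Your overall architecture is the one the paper actually uses (its Theorem \ref{thm:concengeneral}): condition on the support of the $\mathcal{SB}_n$-vector so that the nonzero coordinates become i.i.d.\ uniform, apply a bounded-differences inequality on that product slice, and then control the fluctuation of the conditional mean over the random support via slice concentration and the mixture over $U$ from \eqref{eq:rv-U} (the paper composes log-moment-generating-function bounds via Lemma \ref{lem:mixcon} instead of your union bound, which is only a cosmetic difference; it also handles $i=1$ and $i=3$ directly by McDiarmid, since there the coordinates, resp.\ the pairs $(X_i,X_{n+1-i})$, are independent — no mixture machinery is needed). However, two points in your write-up are genuinely problematic. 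First, the single-coordinate Lipschitz constant you assert for the size statistics is wrong: from the explicit formula $S=\sum_i\big(\tfrac{n-1}{2}x_i^2+(i-\tfrac{n-1}{2})x_i\big)-\sum_{i<j}x_ix_j$, changing one coordinate within $\{0,\dots,d_n\}$ can move the value by $\Theta(nd_n^2)$ (a coordinate update moves up to $d_n$ beads, not one; the $\tfrac{n-1}{2}x_i^2$ term alone jumps by $\tfrac{n-1}{2}d_n^2$), not $O(nd_n)$. Your stated constant would give a subgaussian proxy $n^3d_n^2$, which is impossible since $\mathrm{Var}(S_{n,2})\gtrsim n^3d_n^4$; with the corrected constant $O(nd_n^2)$ (and $O(ne_n^2)$ in the self-conjugate case) McDiarmid over at most $n$ coordinates gives exactly the claimed $n^3d_n^4$ (resp.\ $n^3e_n^4$), so the bound survives but your justification as written does not.

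Second, the conditional-mean step is where the real work lies, and "Efron--Stein on the slice structure" does not deliver it: Efron--Stein only bounds a variance, while your decomposition $F(X)-\E F(X)=(F(X)-\mu(I))+(\mu(I)-\E F(X))$ needs a subgaussian \emph{tail} for $\mu(I)-\E F(X)$. That tail has to be assembled from two separate ingredients, both present in the paper's proof of Theorem \ref{thm:concengeneral}: (a) Bobkov's concentration inequality on the slice $\binom{[n-k]}{k}$ (Proposition \ref{prop:ConcentrationSlice}) for the fluctuation of $J\mapsto\E[f(X_{k,\tau(J)})]$ at fixed $k$, which requires checking a bounded-difference property under the shift $\tau$ (this is what Condition \ref{cond:QuasiAP} encodes; for the statistics here one can take $C_2=0$); and (b) concentration over the random size $k\sim\mathcal{L}(U)$, which the paper obtains by writing $U$ as a sum of independent Bernoullis (Corollary \ref{coro:weightPolyaFrequence}), applying McDiarmid, and proving via a coupling argument that $\big|\E[f(X_{k+1})]-\E[f(X_k)]\big|\lesssim C_1+C_2$ — the chain of estimates \eqref{eq:bound-diff-f}--\eqref{eq:bound-diff-f-term2}. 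Your proposal gestures at the mixture over $U$ but supplies neither the tail-level slice inequality nor the adjacent-$k$ coupling bound, and without these the second term of your union bound is not controlled. With the Lipschitz constants corrected and these two ingredients filled in, your plan does reduce to the paper's argument.
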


It is known that a partition is a strict $(n, dn + 1)$-core partition if and only if it is a strict $n$-core partition with perimeter at most $dn$ (e.g., see \cite{S16}). More precisely, on the one hand, it is clear that any strict $n$-core partition with perimeter at most $dn$ is a strict $(n, dn + 1)$-core partition. On the other hand, if $\lambda$ is a strict $(n, dn + 1)$-core partition with perimeter $h_{11} = h_{11} (\lambda) \ge dn + 1$, then $h_{11} - dn - 1, h_{11} - dn \in \beta (\lambda)$, which contradicts the fact that $\lambda$ is strict. Thus the perimeter of a strict $(n, dn + 1)$-core partition is at most $dn$. Hence, the following result is a special case of Theorem \ref{thm:AsymptoticNormalityCorePartition} and it generalizes Koml\'{o}s, Sergel and Tusn\'{a}dy's work \cite{KST20} on uniform random strict $(n, n+1)$-core partitions. This result is analogous to Zaleski's conjecture \cite{Z19} on random strict $(n, dn-1)$-core partitions.

\begin{coro}\label{cor:1}
   The size of a random strict $(n, dn+1)$-core partition is asymptotically normal when $d \ge 3$ is fixed and $n$ tends to infinity.
\end{coro}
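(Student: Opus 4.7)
The plan is to derive the corollary directly from the $S_{n,2}$ part of Theorem \ref{thm:AsymptoticNormalityCorePartition} by specializing the perimeter bound. First I would invoke the bijection, already sketched in the paragraph preceding the statement, between strict $(n,dn+1)$-core partitions and strict $n$-core partitions with perimeter at most $dn$: the forward direction is trivial since every hook length then lies in $\{1,\ldots,dn\}$, and for the converse one observes that a strict $(n,dn+1)$-core $\lambda$ of perimeter $h_{11}\ge dn+1$ would force both $h_{11}-dn-1$ and $h_{11}-dn$ to lie in $\beta(\lambda)$, contradicting strictness. This identification is measure-preserving with respect to the uniform law and preserves the size statistic.

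Next I would set $D_n=dn$, so that $d_n=D_n/n=d$. With this choice, the random variable $S_{n,2}$ defined in Section \ref{Sec1.3} coincides with the size of a uniform random strict $(n,dn+1)$-core partition. Since $d\ge 3$, the hypothesis $d_n\ge 3$ in Theorem \ref{thm:AsymptoticNormalityCorePartition} is met, and the theorem furnishes
\begin{equation*}
d_{W/K}\bigl(\mathcal{L}(\widehat{S_{n,2}}),\mathcal{N}(0,1)\bigr)\lesssim \frac{1}{\sqrt{n}}.
\end{equation*}
Because convergence in Kolmogorov distance to a continuous limit implies weak convergence, the normalized size $\widehat{S_{n,2}}$ converges in law to $\mathcal{N}(0,1)$ as $n\to\infty$, which is exactly the asymptotic normality claimed.

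At the level of the corollary itself there is no genuine obstacle, since all of the analytic machinery, including Stein's method, the Hoeffding decomposition, and the asymptotics of P\'olya frequency sequences, has already been packaged inside Theorem \ref{thm:AsymptoticNormalityCorePartition}. The only points I would take care to articulate clearly are that the strictness condition for the perimeter bound is the same condition used to define $S_{n,2}$, and that the fixed integer $d$ in the corollary plays the role of the parameter $d_n$ in Section \ref{Sec1.3} uniformly in $n$, so that the hypothesis $d_n\ge 3$ corresponds precisely to $d\ge 3$ and the implicit constant absorbed by $\lesssim$ remains independent of $n$.
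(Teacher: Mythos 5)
Your proposal is correct and follows essentially the same route as the paper: set $D_n = dn$ so that $d_n = d$, identify strict $(n,dn+1)$-core partitions with strict $n$-core partitions of perimeter at most $dn$ via the same $\beta$-set argument (both $h_{11}-dn-1$ and $h_{11}-dn$ would lie in $\beta(\lambda)$, contradicting strictness), and then apply the $S_{n,2}$ case of Theorem \ref{thm:AsymptoticNormalityCorePartition} with $d_n \ge 3$. The extra remarks you make (measure preservation under the identification, Kolmogorov convergence implying weak convergence) are fine and only make explicit what the paper leaves implicit.
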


The asymptotic normality and concentration inequalities of $L_{n, 1}, L_{n, 3}, S_{n, 1}, S_{n, 3}$ directly follow from the quantified version of the classical central limit theorem and the Efron-Stein inequality, respectively. The main task of this paper is to prove the asymptotic normality and concentration inequalities of $L_{n, 2}$ and $S_{n, 2}$. In fact, we prove analogs of Theorems \ref{thm:AsymptoticNormalityCorePartition} and \ref{thm:ConcentrationCorePartition} for more general functionals under certain technical conditions. The approach that we developed is probably applicable for establishing the asymptotic normality of functionals of random variables with certain global dependence structures that can be decomposed into appropriate mixture forms.

\begin{thm} \label{thm:normalgeneral}
Let $d_n \ge 2$. Suppose that the function $f: \R ^ {n - 1} \to \R$ can be written as follows
\begin{align*}
    f (x_1, \cdots, x_{n - 1})
    = \sum_{i = 1}^{n - 1} g_i (x_i)
    + a \sum_{i < j}^{} x_i x_j
    - a \binom{n - 1}{2} \frac{ (d_n + 1) ^ 2 }{ 4 },
\end{align*}
where the function $g_i: \R \to \R$ has root 
$- \frac{ d_n + 1 }{ 2 }$ and $a\in \R$ is a constant. Let $X$ be a random variable uniformly distributed on $S=\left\{ - \frac{ d_n - 1 }{ 2 }, - \frac{ d_n - 3 }{ 2 }, \cdots, \frac{ d_n - 1 }{ 2 } \right\}$. We further assume that the following conditions hold.
\begin{enumerate}
    \item \label{cond:NearIndependence} 
    It holds that 
    \begin{align*}
        a ^ 2 n ^ 2 d_n ^ 4
        \lesssim \inf_{i \in [n - 1]} \text{Var} ( g_i(X)).
    \end{align*}
    \item \label{cond:ArithmeticProgression} For each $x \in S$, the sequence $\{g_i(x)\}_{i=1}^{n-1}$ is an arithmetic progression.  
    \item \label{cond:Boundedness} There exists a universal constant $C > 0$ such that for all $i \in [n - 1]$,
    \begin{align*}
        \sup_{x\in S} g_i ^ 2 (x) < C \inf_{j \in [n - 1]} \text{Var} \left( g_{j}(X) \right).
    \end{align*}
    \item \label{cond:Nondegenerate} If $a \neq 0$, then there exists a universal constant $C \in (0, 1)$ such that for all $i \in [n - 1]$,
    \begin{align*}
        \text{Cov} \left( X, g_i (X) \right) ^ 2
        \leq C \text{Var} \left( X \right) \text{Var} \left( g_i (X) \right).
    \end{align*}
\end{enumerate}
Consider the random variable $W = f ( X_1, \cdots, X_{n - 1} ) $, where $( X_1, \cdots, X_{n - 1} )$ has the distribution such that $\left( X_1 + \frac{ d_n + 1 }{ 2 }, \cdots, X_{n - 1} + \frac{ d_n + 1 }{ 2 } \right) $ is uniformly distributed on $\mathcal{SB}_n$ (which is defined in \eqref{eq:SBn}). 
For sufficiently large $n$, we have
\begin{align*}
    d_{W / K} \left( \mathcal{L} ( \widehat{W} ),~ \mathcal{N}\left( 0, 1 \right) \right) 
    \lesssim \frac{ 1 }{ \sqrt{ n } }.
\end{align*}
\end{thm}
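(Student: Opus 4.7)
The plan is to exploit the mixture description of the law of $(X_1,\ldots,X_{n-1})$: by partitioning $\mathcal{SB}_n$ according to the set $I\subseteq[n-1]$ of ``active'' coordinates (those $i$ with $X_i\neq-(d_n+1)/2$), the vector is drawn by first sampling the admissible (non-adjacent) subset $I$ and then sampling $(X_i)_{i\in I}$ i.i.d.\ uniform on $S$, with $X_i=-(d_n+1)/2$ for $i\notin I$. Since $g_i(-(d_n+1)/2)=0$, the linear part $A:=\sum_i g_i(X_i)$ reduces to $\sum_{i\in I} g_i(X_i)$, and after the shift $Y_i=X_i+(d_n+1)/2$ the quadratic part $a\sum_{i<j}X_iX_j-a\binom{n-1}{2}(d_n+1)^2/4$ expands into pair sums charging only pairs in $I$. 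The first step is to show, via conditions (1) and (3), that $A$ dominates the quadratic remainder in $L^2$, namely $\|W-\E W-(A-\E A)\|_2\lesssim\sigma_A/\sqrt n$, so that a CLT for $\widehat A$ at rate $1/\sqrt n$ transfers to $\widehat W$ at the same rate in both $d_K$ and $d_W$.

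To prove the CLT for $A$ I condition on $I$. In the \emph{inner layer}, given $I$, $A$ is a sum of $|I|$ independent bounded centered random variables, so the classical Berry-Esseen bound together with the boundedness from condition (3) yields a normal approximation for $\mathcal L(A\mid I)$ with rate $1/\sqrt{|I|}$. For the \emph{outer layer}, condition (2) lets me write $g_i(x)=\alpha(x)+i\beta(x)$, whence $\E[A\mid I]=|I|\mu_\alpha+\mu_\beta\sum_{i\in I}i$ and $\Var(A\mid I)$ is a low-degree polynomial in $(|I|,\sum_{i\in I}i,\sum_{i\in I}i^2)$. The marginal law of $|I|$ is the random variable $U$ from \eqref{eq:rv-U}, and the weight sequence $\{\binom{n-k}{k}d_n^k\}_k$ is a Pólya frequency sequence, so $U$ is asymptotically normal with expectation of order $n$ and rate $1/\sqrt n$. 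Conditional on $|I|=k$, the bijection $\{i_1<\cdots<i_k\}\mapsto\{i_j-(j-1)\}_j$ identifies the law of $I$ with the uniform law on $\binom{[n-k]}{k}$, on which Hoeffding's combinatorial CLT gives a rate-$1/\sqrt k$ CLT for $\sum_{i\in I}i$, while an Efron-Stein bound on the slice yields $1/\sqrt n$-concentration for $\sum_{i\in I}i^2$. Joining these, $\E[A\mid I]$ is asymptotically normal at rate $1/\sqrt n$ and $\Var(A\mid I)$ is concentrated at its mean within $O(1/\sqrt n)$.

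The main obstacle lies in combining the two layers cleanly. Since $\Var(A\mid I)$ is not constant, normalizing $A$ by its unconditional standard deviation $\sigma_A$ mismatches the inner Gaussian scale, so one must first replace $\mathcal N(\E[A\mid I],\Var(A\mid I))$ by $\mathcal N(\E[A\mid I],\E\Var(A\mid I))$ at cost $O(1/\sqrt n)$ and then combine additively with the outer Gaussian approximation of $\E[A\mid I]$. Condition (4) enters here when $a\neq 0$ to prevent $g_i(X)$ from collapsing onto the direction of $X$, which would otherwise couple with the quadratic term to produce a degenerate $\chi^2$-type limit; it guarantees a non-trivial residual variance of $g_i(X)$ after projection onto $X$, so that the linear CLT produces a genuine Gaussian component whose scale matches $\sigma_A$. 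A standard smoothing or characteristic-function argument in the spirit of the Chen-Goldstein-Shao Stein framework then allows the sum of an inner Gaussian and an outer asymptotically Gaussian component to be bounded by the sum of their individual rates, yielding the final $1/\sqrt n$ rate in both Kolmogorov and Wasserstein metrics once the quadratic remainder is absorbed.
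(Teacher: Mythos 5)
Your reduction in the first paragraph is where the argument breaks. You claim that conditions (1) and (3) give $\|W-\E W-(A-\E A)\|_2\lesssim\sigma_A/\sqrt n$ with $A=\sum_i g_i(X_i)$, i.e.\ that the quadratic part $R:=a\sum_{i<j}X_iX_j-a\binom{n-1}{2}\frac{(d_n+1)^2}{4}$ is negligible after normalization. This is false under the stated hypotheses. Writing $T=\sum_i X_i=-\frac{(n-1)(d_n+1)}{2}+\frac{d_n+1}{2}|I|+\sum_{i\in I}X_i$ and $R=\frac a2\bigl(T^2-\sum_iX_i^2\bigr)+\text{const}$, the dominant fluctuation of $R$ is $a\cdot\E T\cdot\bigl(\frac{d_n+1}{2}|I|+\sum_{i\in I}X_i-\E[\cdot]\bigr)$, which has standard deviation of order $|a|\,n d_n\cdot\sqrt n\,d_n=|a|\,n^{3/2}d_n^2$. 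Condition (1) only guarantees $|a|\,n\,d_n^2\lesssim\sqrt{\inf_i\Var(g_i(X))}$, while $\sigma_A\sim\sqrt{n\inf_i\Var(g_i(X))}$; hence $\mathrm{sd}(R)$ can be of the same order as $\sigma_A$, not smaller by the factor $\sqrt n$ you need. The motivating case $S_{n,2}$ ($a=-1$, $d_n$ fixed, $\Var(g_i(X))\asymp n^2d_n^4$) saturates condition (1), and there $\mathrm{sd}(R)\asymp\sigma_W\asymp n^{3/2}d_n^2$: discarding the quadratic term changes the limit law, so no CLT for $\widehat A$ alone can transfer to $\widehat W$ at rate $1/\sqrt n$. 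Your own invocation of condition (4) is a symptom of this: if $R$ really were negligible, no non-collinearity of $g_i(X)$ with $X$ would be required, since $A$ is just a sum of independent terms given $I$.

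The paper avoids this by never separating off the quadratic term. Conditional on the active set $\tau(J)$, it keeps the full Hoeffding decomposition $W^{\tau(J)}-\mu_{\tau(J)}=\sum_i W_i+\sum_{\{i,j\}}W_{\{i,j\}}$, in which the quadratic part contributes both the second-order components $W_{\{i,j\}}=aX_iX_j$ (handled by the Stein bound of Proposition \ref{prop:hoePrivault} with $d=2$, using condition (1) to control them against $\sum_i\E[W_i^2]$) and, crucially, the term $-a(n-1-|J|)\frac{d_n+1}{2}X_i$ inside the first-order components $W_i$, which is of the same order as $g_i$ itself; condition (4) is exactly what keeps $\E[W_i^2]\gtrsim\Var(g_i(X))$ there. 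Moreover the conditional mean is $\mu_k=Ak+Bk^2$ with $B=\frac{a(d_n+1)^2}{8}\neq0$, so your outer layer (which you set up only for a mean linear in $(|I|,\sum_{i\in I}i)$) must also treat a quadratic functional of $U$; the paper does this with a second Hoeffding-decomposition Stein argument in the Bernoulli representation of $U$ (Step 5). Your inner/outer machinery (Berry--Esseen given $I$, Hoeffding's combinatorial CLT for $\sum_{i\in I}i$, Pólya-frequency normality of $U$, Efron--Stein concentration of the conditional variance, and mixture-comparison lemmas) matches the paper's Steps 1--4 for the linear structure, but without the corrected treatment of the quadratic interaction the proof does not go through.
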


\begin{rmk}
    Note that the convergence rate obtained is independent of $d_n$. 
\end{rmk}


\begin{rmk}
Our proof combines Stein's method based on Hoeffding decomposition, the Efron-Stein inequality, Hoeffding's combinatorial central limit theorem, and asymptotics of P\'{o}lya frequency sequences. The method employed in \cite{KST20} for $d_n = 1$ cannot be easily extended to the general case $d_n \ge 2$, since $W$ follows a mixture distribution and the asymptotic normality of each mixing component is a problem to handle, which does not appear in the case $d_n = 1$. We use Stein's method based on Hoeffding decomposition to deal with this problem. Also, we avoid the lengthy computations in \cite{KST20} by applying the Efron-Stein inequality.
\end{rmk}

\begin{rmk} \label{rmk:AsymptoticNormalityConditionRemark}
Let us provide explanations for the conditions required in Theorem \ref{thm:normalgeneral}.
\begin{enumerate}
    \item The assumption $d_n \ge 2$ is for simplicity and is compatible with the hidden assumption that $\inf_{i \in [n - 1]} \text{Var} \left( g_i (X)\right) > 0$. For given $W$, the case $d_n = 1$ can be analyzed in a manner similar to that of Theorem \ref{thm:normalgeneral}. The only exception is that some steps may be omitted and the inequalities that involve $\inf_{i \in [n - 1]} \text{Var} \left( g_i(X) \right)$ need to be replaced by ad-hoc computations.
    \item Condition \ref{cond:NearIndependence} is to bound the influence of higher order terms in $f$.
    \item Condition \ref{cond:ArithmeticProgression} is needed to apply Hoeffding's combinatorial central limit theorem.
    \item Condition \ref{cond:Boundedness} implies that functions $g_i$ are balanced in some sense.
    \item Condition \ref{cond:Nondegenerate} implies that functions $g_i$ are nonlinear if $a \neq 0$.
\end{enumerate}
\end{rmk}

\begin{rmk}
    Let $\lambda$ be a uniform random self-conjugate $n$-core partition with perimeter at most $E_n$. Suppose that $E_n$ is divisible by $2n$. Define
\begin{align*}
    M_{n, 3} ^ {(k)} := \sum_{x \in MD (\lambda)}^{} x ^ k
    = \sum_{1 \leq i < \frac{ n + 1 }{ 2 }}^{} \left( \sum_{x \in MD (\lambda)_{2 i - 1}}^{} x ^ k + \sum_{x \in MD (\lambda)_{2 (n - i) + 1}}^{} x ^ k \right). 
\end{align*}
Here, $MD (\lambda)$ is the set of hook lengths of boxes on the main diagonal of $\lambda$. Then $M_{n, 3} ^ {(0)} = L_{n, 3}$ and $M_{n, 3} ^ {(2)} = S_{n, 3}$. By the same argument in Theorem \ref{thm:normalgeneral}, one can prove that $M_{n, 3} ^ {(k)}$ is also asymptotically normal. One might ask if similar statistics of random strict $n$-core partitions with perimeters at most $D_n$ are asymptotically normal, with an explicit convergence rate, where $D_n$ is divisible by $n$. The main obstacle is that Hoeffding's combinatorial central limit theorem cannot be applied at higher orders, i.e., Condition \ref{cond:ArithmeticProgression} in Theorem \ref{thm:normalgeneral} fails.    
\end{rmk}




We also derive the following general results on concentration inequalities.

\begin{thm} \label{thm:concengeneral}
Suppose that the function $f: \{0, 1, \cdots, d_n\} ^ {n - 1} \to \R$ satisfies the following conditions:
\begin{enumerate}
    \item \label{cond:BoundDiff} There exists an absolute constant $C_1$ such that for any $i \in [n - 1]$ and $x_1, \cdots, x_i, x_i', \\ x_{i + 1}, \cdots, x_{n - 1} \in \{0, 1, \cdots, d_n\}$,
    \begin{align*}
        \left| f (x_1, \cdots, x_i, \cdots x_{n - 1}) - f (x_1, \cdots, x_i', \cdots, x_{n - 1}) \right| \leq C_1.
    \end{align*}
    \item \label{cond:QuasiAP} There exists an absolute constant $C_2$ such that for all $0\leq k\leq \lfloor n/2\rfloor$ and $I, J \in \binom{[n - k]}{k}$ such that $\left| I \Delta J \right|  = 2$, it holds that
    \begin{align*}
        \left| \big( \E[f(X_I)] - \E [f(X_{\tau(I)})] \big)
        - \big( \E[f(X_J)] - \E [f(X_{\tau(J)})] \big) \right| 
        \leq C_2,
    \end{align*}
    where $X_I, X_{\tau(I)}, X_J, X_{\tau(J)}$ are uniformly distributed on $I^{(n)}, \tau(I)^{(n)}, J^{(n)}, \tau(J)^{(n)}$, respectively. (The definitions of $\tau(I)$ and $I^{(n)}$ are given in \eqref{eq:tau-action} and \eqref{eq:I^n}, respectively).
\end{enumerate}
Suppose that $X$ is selected uniformly at random  from $\mathcal{SB}_n$. Then there exists an absolute constant $K > 0$ such that for $r > 0$,
\begin{align} \label{eq:concensbn}
    \mathbb{P} \left( \left| f (X) - \E [ f (X) ] \right| \ge r \right)
    \leq 2 \exp \left( - \frac{ r ^ 2 }{ nK(C_1 + C_2) ^ 2  } \right).
\end{align}
\end{thm}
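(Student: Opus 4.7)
The plan is to prove the subgaussian tail bound via a three-layer decomposition of the uniform law on $\mathcal{SB}_n$ according to the number $U$ of nonzero coordinates, applying subgaussian concentration tools at each layer. Realize $X \sim \mathcal{U}(\mathcal{SB}_n)$ as $X = X(U, I, Y)$, where $U$ has the distribution from \eqref{eq:rv-U}, conditionally on $U = k$ the set $I$ is uniform on $\binom{[n-k]}{k}$ (so that $\tau(I)$ is the support of $X$), and $Y \in [d_n]^k$ is iid uniform with $X_{(\tau(I))_j} = Y_j$. Denote $\psi(k, I) := \mathbb{E}[f(X) \mid U = k, I] = \mathbb{E}[f(X_{\tau(I)})]$ and $\phi(k) := \mathbb{E}[f(X) \mid U = k]$.

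First, fix $(U, I) = (k, I_0)$. By Condition \ref{cond:BoundDiff}, $f(X)$ has bounded differences $C_1$ in $Y \in [d_n]^k$, so McDiarmid's inequality gives subgaussian concentration of $f(X)$ around $\psi(k, I_0)$ with variance proxy of order $k C_1^2$. Next, conditioning only on $U = k$, the set $I$ is uniform on the slice $\binom{[n-k]}{k}$. Decomposing $\psi(k, I) = \mathbb{E}[f(X_I)] - h(I)$ with $h(I) := \mathbb{E}[f(X_I)] - \mathbb{E}[f(X_{\tau(I)})]$, Condition \ref{cond:BoundDiff} implies that $I \mapsto \mathbb{E}[f(X_I)]$ has slice-bounded-differences $2C_1$ (a swap in $I$ alters $X_I$ in two coordinates), while Condition \ref{cond:QuasiAP} directly gives $h$ slice-bounded-differences $C_2$. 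A standard concentration inequality on the slice $\binom{[n-k]}{k}$ --- for instance via Maurey's coupling with a uniform random permutation, or the modified log-Sobolev inequality on slices --- then yields subgaussian concentration of $\psi(k, I)$ around $\phi(k)$ with variance proxy of order $k(C_1+C_2)^2$.

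For the outer layer, the pmf of $U$ is a P\'olya frequency sequence and hence log-concave, so $U$ concentrates around $\mathbb{E} U$ in a subgaussian manner with $\Var(U) \lesssim n$. The remaining ingredient is a Lipschitz-type bound $|\phi(k+1) - \phi(k)| \lesssim C_1 + C_2$, which I would establish by coupling a uniform independent $k$-set with a uniform independent $(k{+}1)$-set (adjoining one element and then tracking the $\tau$-image) and invoking Conditions \ref{cond:BoundDiff} and \ref{cond:QuasiAP} to control the resulting change in $\phi$.

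Combining the three subgaussian estimates by chaining the conditional moment generating functions --- inner conditioned on $(U, I)$, middle conditioned on $U$, outer for $U$ --- and using the bound $U \leq \lfloor n/2 \rfloor$ gives the desired inequality \eqref{eq:concensbn} with variance proxy of order $n(C_1+C_2)^2$. The hardest step will be the third-layer Lipschitz estimate for $\phi$: here the dimension of the inner slice and product spaces changes with $k$, and Condition \ref{cond:QuasiAP} must interact with the $\tau$-bijection in exactly the right way to absorb the non-product structure of $\mathcal{SB}_n$.
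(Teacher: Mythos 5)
Your plan follows essentially the same route as the paper's proof: the same three-layer mixture decomposition of $\mathcal{U}(\mathcal{SB}_n)$ (values, support set on a slice, number of nonzero entries), McDiarmid at the inner product layer, a swap-bounded-differences concentration inequality on the slice (the paper uses Bobkov's) applied to $\psi(k,\cdot)$ with constant $2C_1+C_2$, and at the outer layer the P\'olya-frequency/Bernoulli-sum representation of $U$ combined with the Lipschitz estimate $|\phi(k+1)-\phi(k)|\lesssim C_1+C_2$, which the paper establishes by exactly the set-coupling you sketch (obtaining $5C_1+2C_2$), all chained through conditional logarithmic moment generating functions as in the paper's mixture lemma. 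The proposal is correct and matches the paper's argument.
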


\begin{rmk}
Condition \ref{cond:BoundDiff} is the bounded difference condition, and Condition \ref{cond:QuasiAP} is needed to apply Bobkov's concentration inequality on slices, i.e., Proposition \ref{prop:ConcentrationSlice}.
\end{rmk}

\subsection{Outline}

The rest of the paper is organized as follows. In the next section, we will give an overview of the proof of Theorem \ref{thm:normalgeneral}, characterize $\{L_{n, i}\}_{i=1}^3$ and $\{S_{n, i}\}_{i=1}^3$, and list the techniques that will be used to prove the main results, including Stein's method based on Hoeffding decomposition, Hoeffding's combinatorial central limit theorem, the Efron-Stein inequalities, asymptotics of P\'{o}lya frequency sequences, comparisons between normal mixtures and concentration inequalities of mixture distributions. Proofs of the main theorems will be given in Section $3$.

\section{Preliminaries}

\subsection{Proof outline of Theorem \ref{thm:normalgeneral}} \label{sec:thm:normalgeneral}


First recall that $\mathcal{SB}_n = \{(x_1, \cdots, x_{n - 1}) \in \{0, 1, \cdots, d_n\}^{n - 1} : x_i x_{i + 1} = 0\ \text{for}\ 1 \leq i \leq n - 2\}$ and $W = f (X_1, \cdots, X_{n - 1})$, where $\left( X_1 + \frac{d_n + 1}{2}, \cdots, X_{n - 1} + \frac{d_n + 1}{2} \right)  \sim \mathcal{U} \left( \mathcal{SB}_n \right) $. One of the main difficulties of showing the asymptotic normality of $W$ is that most known tools for bounding $d_{W / K} ( \mathcal{L} ( \widehat{W} ), \mathcal{N}( 0, 1 ) )$ assume that $X_1, \cdots, X_{n - 1}$ are at most locally dependent. In our case, however, $X_1, \cdots, X_{n-1}$ have a weak global dependence structure. 
We note that the distribution of $W$ can be represented as the mixture of asymptotically normal mixing components. We further exploit this mixing structure and establish the asymptotic normality of $W$ according to the following informal principle.


Let $X$ be the mixture of $\{X_{\theta}\}_{\theta \in \Theta}$ with $\eta$ being the \emph{mixing law}, i.e., the parameter $\theta$ is distributed according to $\eta$. Then $X$ is asymptotically normal, provided that the following conditions hold: (1) For each $\theta\in \Theta$, the random variable $X_{\theta}$ is asymptotically normal; (2) The random mean $\mu_{X_{\theta}}$ is asymptotically normal or the variance $\text{Var} \left( \mu_{X_{\theta}} \right)$ is small for $\theta \sim \eta$; (3) The variance $\sigma_{X_{\theta}}^2$ concentrates around its mean $\E_{\theta\sim\eta} [\sigma_{X_{\theta}}^2]$ in a proper sense.
This idea is based on the fact that compounding a normal distribution with the mean distributed according to another normal distribution and variance fixed yields again a normal distribution. More precisely, we have for $a, b, x \in \R$ that
\begin{align*}
    \int_{\R}^{} \Phi (ax + bt) \varphi (t) dt = \Phi \left( \frac{ax}{\sqrt{ 1 + b^2 }} \right). 
\end{align*}
More details can be found in \cite{KST20} (Lemma 16).  

We will show in Section \ref{subsection:proofnormalgeneral} that $W$ can be written as the mixture of $W ^ {\tau (J)}$ with weight $d_n ^ {|J|}/|\mathcal{SB}_n|$, where $J \in \cup_{0 \leq k \leq \left\lfloor n / 2 \right\rfloor }^{} \binom{[n - k]}{k}$. Here, $W^{J} = f ( X ^ J ) $ and $X ^ J = (X_1, \cdots, X_{n - 1})$ satisfies that $X_j = - \frac{ d_n + 1 }{ 2 }$ for $j \not \in J$, and $X_j \sim \mathcal{U} \left( \left\{ - \frac{ d_n - 1 }{ 2 }, - \frac{ d_n - 3 }{ 2 }, \cdots, \frac{ d_n - 1 }{ 2 } \right\}  \right)$ for $j \in J$. To prove the asymptotic normality of $W$, we will construct random variables $\{W ^ {(i)}\}_{i=1}^4$ with the same mean and variance as $W$ to interpolate between $W ^ {(0)} = W$ and $W ^ {(5)} \sim \mathcal{N}\left( \mu_W, \sigma_W ^ 2 \right)$. We will establish upper bounds of $d_{W / K} \big( \mathcal{L} ( \widehat{W ^ {(i)}} ), \mathcal{L} ( \widehat{W ^ {(i + 1)}} ) \big)$ for $i=0, 1, \cdots, 4$, via various tools. In the context where condition (1) holds, we will apply Stein's method based on Hoeffding decomposition (i.e., Proposition \ref{prop:hoePrivault}); in the context where condition (2) holds, Hoeffding's combinatorial central limit theorem will be used (i.e., Lemma \ref{lem:hoeffccltexample}); in the context where condition (3) holds, we will employ Efron-Stein inequalities on product spaces and slices (i.e., Propositions \ref{prop:ConcentrationSlice} and \ref{prop:BoundDiffConcentra}); estimates of distances between normal mixtures are also used (i.e., Lemmas \ref{lem:normaldistancekolwass} and \ref{lem:mixdis}).

Now we give detailed constructions of the random variables $\{W ^ {(i)}\}_{i=1}^4$ (more precisely, their distributions).

\textbf { Step 1.} To approximate $\mathcal{L}(W)$, we replace each mixing components $\mathcal{L}(W^{\tau(J)})$ by the normal distribution $\mathcal{N}\big( \mu_{\tau (J)}, \sigma_{\tau (J)} ^ 2\big)$, while keeping the mixing law unchanged. Here, we have $\mu_{\tau (J)} = \mu_{W ^ {\tau (J)}}$ and $\sigma_{\tau (J)} = \sigma_{W ^ {\tau (J)}}$. This yields the distribution $\mathcal{L}({W^{(1)}})$. It is easy to see that $\mu_{W^{(1)}} = \mu_{W}$ and $\sigma_{W^{(1)}} = \sigma_{W}$. Then, by triangle inequality, we have
    \begin{align*}
        d_{W / K} \left( \mathcal{L} ( \widehat{W} ), \mathcal{L} ( \widehat{W ^ {(1)}} )  \right) 
        \leq \E_{J\sim\mathcal{U}(S)} \left[ d_{W / K} \left( \mathcal{L} \left( \frac{W ^ {\tau (J)} - \mu_{W}}{\sigma_{W}} \right), \mathcal{N}\left( \frac{\mu_{\tau (J)} - \mu_{W}}{\sigma_{W}}, \frac{\sigma_{\tau (J)}^2}{\sigma_{W}^2} \right) \right)   \right],
    \end{align*}
    where $S=\cup_{0 \leq k \leq \left\lfloor n / 2 \right\rfloor }^{} \binom{[n - k]}{k}$. We will derive Hoeffding's decomposition of $W ^ {\tau (J)}$ and bound the right-hand side of the inequality via Stein's method (i.e., Proposition \ref{prop:hoePrivault}).

         
        \textbf { Step 2.} For brevity, we write $S_k={[n-k] \choose k}$ for $0\leq k\leq \lfloor n/2\rfloor$. To approximate $\mathcal{L}(W^{(1)})$, we replace each mixing component $\mathcal{N}\big( \mu_{\tau (J)}, \sigma_{\tau (J)} ^ 2 \big)$ by $\mathcal{N}\big( \mu_{\tau (J)}, \E_{J'\sim \mathcal{U}(S_{|J|})} \big[ \sigma_{\tau (J')}^2 \big]\big)$, while keeping the mixing law unchanged. This gives the distribution  $\mathcal{L}(W^{(2)})$. In other words, we replace the variance $\sigma_{\tau (J)} ^ 2$ by the average variance of $\sigma_{\tau (J')} ^ 2$ over $J'$ such that $|J'|=|J|$. One can check that $\sigma_{W^{(2)}} = \sigma_{W^{(1)}}$. (It relies on the fact that for each $k$ the random variables $W ^ {\tau (J)}$ with $J \in S_k$ have the same weight). By Lemma \ref{lem:normaldistancekolwass}, we have 
    \begin{align*} 
        d_K \left( \mathcal{L} ( \widehat{W^{(1)}} ), \mathcal{L} ( \widehat{W^{(2)}} ) \right) 
        \leq \E_{k \sim \mathcal{L} (U)} 
        \left[ \frac{\sqrt{ \text{Var}_{J \sim \mathcal{U}(S_k)} \big( \sigma_{\tau (J)}^2\big) }}
        {\E_{J \sim \mathcal{U}(S_k)} \big[ \sigma_{\tau (J)}^2 \big] } \right],
    \end{align*}
    where the distribution of $U$ is given in \eqref{eq:rv-U}. One can then apply Efron-Stein's inequality on slices (i.e., Proposition \ref{prop:ConcentrationSlice}) to give an upper bound of $\text{Var}_{I \sim \mathcal{U}(S_k)}( \sigma_{\tau (I)}^2)$. The Wasserstein $W_1$ distance between $\widehat{W^{(1)}}$ and $\widehat{W^{(2)}}$ can be bounded in a similar manner.
    
 
 \textbf { Step 3.} We denote by $\mu_k$ and $\sigma_k ^ 2$ the mean and variance, respectively, of the uniform mixture of $W ^ {\tau (J)}$ with $J \sim \mathcal{U}(S_k)$. We define $\mathcal{L} ( W^{(3)} )$ as the mixture of $\mathcal{N}\left( \mu_{k}, \sigma_{k}^2 \right)$ with $k\sim\mathcal{L}(U)$.  One can check that $\mu_{W^{(3)}} = \mu_{W^{(2)}}$  and $\sigma_{W^{(3)}} = \sigma_{W^{(2)}}$. By Lemma \ref{lem:mixdis}, we have 
\begin{equation*}
d_{K} \left( \mathcal{L} ( \widehat{W^{(2)}} ), \mathcal{L} ( \widehat{W^{(3)}} ) \right) \lesssim  \E_{k\sim \mathcal{L}(U)} 
    \left[ \frac{ \sigma_{\mu_{\tau (J)}} \cdot d_W \left( \mathcal{L} \left( \widehat{\mu_{\tau (J)}} \right), \mathcal{N}\left( 0, 1 \right)  \right)}{ \sqrt{ \E_{J'\sim \mathcal{U}\left({[n-k] \choose k}\right)} [ \sigma_{\tau (J')} ^ 2 ] }  }  \ \middle| \ 0 < k < \frac{ n }{ 2 } \right]  .
 \end{equation*}
Then we use Hoeffding's combinatorial central limit theorem (i.e., Lemma \ref{lem:hoeffccltexample}) to bound the right-hand side of the inequality above. The Wasserstein $W_1$ distance between $\widehat{W^{(2)}}$ and $\widehat{W^{(3)}}$ can be bounded in a similar manner.

   
  \textbf { Step 4.} To approximate $\mathcal{L} \left( W^{(3)} \right)$, we define $\mathcal{L} \left( W^{(4)} \right)$ via replacing $\mathcal{N}\left( \mu_{k}, \sigma_{k}^2 \right)$ by $\mathcal{N}\left( \mu_{k}, \E [ \sigma_{U}^2 ] \right)$ where the distribution of $U$ is given in \eqref{eq:rv-U}, while keeping the mixing law unchanged. Then $\mu_{W^{(4)}} = \mu_{W^{(3)}}$  and $\sigma_{W^{(4)}} = \sigma_{W^{(3)}}$. By Lemma \ref{lem:normaldistancekolwass}, we have
    \begin{align*}
        d_K \left( \mathcal{L} ( \widehat{W^{(3)}} ), \mathcal{L} ( \widehat{W^{(4)}} ) \right) 
        \leq \frac{ \sqrt{ \text{Var} \left( \sigma_U ^ 2 \right) } }{ \E \left[ \sigma_{U}^2 \right] }.
    \end{align*}
    Then we use Efron-Stein inequality on product spaces (i.e., Proposition \ref{prop:BoundDiffConcentra}) to bound the variance $\text{Var}( \sigma_U ^ 2 ) $. The estimate of the Wasserstein $W_1$ distance can be obtained in a similar manner.
    
    
\textbf { Step 5.} By Lemma \ref{lem:mixdis}, we have $$
d_{W / K} \left( \mathcal{L} ( \widehat{W^{(4)}} ), \mathcal{N}\left( 0, 1 \right)  \right)\lesssim \frac{ \sigma_{\mu_U} }{ \sqrt{ \E [ \sigma_U ^ 2 ]  } } d_W \left( \mathcal{L} \left( \widehat{\mu_{U}} \right), \mathcal{N}\left( 0, 1 \right)  \right) .
$$
Then we use Stein's method based on Hoeffding decomposition (i.e., Proposition \ref{prop:hoePrivault}) to prove the asymptotic normality of $\mu_{k}$.

\begin{rmk}
In Steps 1 and 5, we use Stein's method based on Hoeffding decomposition to prove the asymptotic normality. There are other ways to apply Stein's method, for example, the exchangeable pairs method. However, other methods have to bound the Kolmogorov distance and Wasserstein $W_1$ distance separately. So we adopt the method from \cite{PS22}.
\end{rmk}

\subsection{Statistics of core partitions and $\beta$-sets}\label{subsec:beta set}

Recall that $L_{n, 1}$ and $S_{n, 1}$ denote respectively the length and size of a uniform random $n$-core partition with perimeter at most $D_n$; $L_{n, 2}$ and $S_{n, 2}$ denote respectively the length and size of a uniform random strict $n$-core partition with perimeter at most $D_n$. Here, $(D_n)_{n \ge 1}$ is a sequence of positive integers divisible by $n$ and we define $d_n := D_n / n$. In this subsection, we give characterizations of $L_{n, 1}, L_{n, 2}, S_{n, 1}, S_{n, 2}$. 

For a partition $\lambda = (\lambda_1, \lambda_2, \cdots, \lambda_\ell)$, its $\beta$-set $\beta(\lambda)$ is defined as the set of \emph{first-column hook lengths} in the Young diagram of $\lambda$, i.e., $\beta(\lambda) = \{h_{i1} : 1 \leq i \leq \ell\}$, where $h_{i1} = \lambda_i +  \ell - i$ and the maximal element $h_{11} = \lambda_1 + \ell - 1$ is called the \emph{perimeter} of $\lambda$. Note that a partition is uniquely determined by its $\beta$-set. 

Let $\mathcal{C}_n$ denote the set of $n$-core partitions with perimeters at most $D_n$ and let $ \mathcal{CD}_n$ denote the set of strict $n$-core partitions with perimeters at most $D_n$. Let $\mathcal{B}_n := \{0, 1, \cdots, d_n\}^{n - 1}$ and recall that $\mathcal{SB}_n := \{(x_1, x_2, \cdots, x_{n - 1}) \in \mathcal{B}_n : x_i x_{i + 1} = 0\ \text{for}\ 1 \leq i \leq n - 2\}$.

It is known that a partition $\lambda$ is an $n$-core partition if and only if for any $h \in \beta(\lambda)$ such that $h \ge n$, we have $h - n \in \beta(\lambda)$ (e.g., see \cite{A02, B71, OS07, X18a}). For $i=1, 2, \cdots, n-1$, we define 
$\beta(\lambda)_i := \beta(\lambda) \cap \left( n \Z + i \right) = \beta(\lambda) \cap \{nk+i : k \in \Z\}$.
It is clear that $\beta(\lambda) = \cup_{i=1}^{n- 1}\beta(\lambda)_i$. Write $x_i (\lambda) = \left| \beta(\lambda)_i \right|$.  Then we have $\beta(\lambda)_i = \{i, i + n, \cdots, i + (x_i (\lambda) - 1)n\}$ provided that $x_i(\lambda) > 0$. An $n$-core partition $\lambda$ has perimeter at most $D_n$ if and only if $x_i (\lambda) \leq d_n$ for all $1 \leq i \leq n-1$. Also, an $n$-core partition $\lambda$ is strict if and only if there don't exist $h_1, h_2 \in \beta(\lambda)$ such that $ \left| h_1 - h_2 \right| = 1$, and hence $x_i(\lambda) x_{i + 1}(\lambda) = 0$ for all $1 \leq i \leq n - 2$. Therefore, the map 
\begin{equation}\label{eq:bijection}
\eta: \lambda \to (x_1(\lambda), x_2(\lambda), \cdots, x_{n - 1}(\lambda))
\end{equation}
builds a bijection from $\mathcal{C}_n$ to $\mathcal{B}_n$, as well as a bijection from $\mathcal{CD}_n$ to $\mathcal{SB}_n$. We refer to \cite{X18a} for more details.

For brevity, we write $x_i$ for $x_i (\lambda)$. Then one can compute from its $\beta$-set $\beta(\lambda)$ the length $\ell (\lambda)$ and size $|\lambda|$ of $\lambda$ as
\begin{align} 
    \ell (\lambda) & = \left| \beta(\lambda) \right| = \sum_{i = 1}^{n - 1} x_i, \label{eq:length}\\
    \left| \lambda \right| 
    & = \sum_{h \in \beta(\lambda)}^{} h - \frac{\left| \beta(\lambda) \right| \left( \left| \beta(\lambda) \right| - 1 \right)}{2} \notag\\
    & = \sum_{i = 1}^{n - 1} \left( i x_i + \frac{x_i (x_i - 1)}{2}n \right) - \frac{\sum_{i = 1}^{n - 1} x_i \left( \sum_{i = 1}^{n - 1} x_i - 1 \right) }{2} \notag\\
    & = \sum_{i = 1}^{n - 1} \left( \frac{n - 1}{2} x_i^2 + \left( i - \frac{n - 1}{2} \right) x_i  \right) - \sum_{1 \leq i < j \leq n - 1}^{} x_i x_j. \label{eq:size}
\end{align}

Owing to the bijection \eqref{eq:bijection} from $\mathcal{C}_n$ to $\mathcal{B}_n$, and from $\mathcal{CD}_n$ to $\mathcal{SB}_n$, as well as
representations of the length and size in \eqref{eq:length} and \eqref{eq:size}, respectively, we obtain the following characterizations of $L_{n, 1}$, $L_{n, 2}$, $S_{n, 1}$, $S_{n, 2}$.

\begin{lem} \label{lem:ls}
    Let $X_1, X_2, \cdots, X_{n - 1}$ be independent and uniformly distributed on $\{0, 1, \cdots, d_n\}$. Then we have
    \begin{align*}
        L_{n, 1} & = \sum_{i = 1}^{n - 1} X_i, \\
        S_{n, 1} & = \sum_{i = 1}^{n - 1} \left( \frac{n - 1}{2} X_i^2 + \left( i - \frac{n - 1}{2} \right) X_i  \right) - \sum_{1 \leq i < j \leq n - 1}^{} X_i X_j.
    \end{align*}
    Let $(X_1, X_2, \cdots, X_{n - 1})$ be uniformly distributed on $\mathcal{SB}_n$. Then we have
    \begin{align*}
        L_{n, 2} & = \sum_{i = 1}^{n - 1} X_i, \\
        S_{n, 2} & = \sum_{i = 1}^{n - 1} \left( \frac{n - 1}{2} X_i^2 + \left( i - \frac{n - 1}{2} \right) X_i  \right) - \sum_{1 \leq i < j \leq n - 1}^{} X_i X_j.
    \end{align*}
\end{lem}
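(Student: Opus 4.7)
\textbf{Proof plan for Lemma \ref{lem:ls}.}
The statement is essentially a book-keeping corollary of the bijection $\eta$ from $\mathcal{C}_n$ to $\mathcal{B}_n$ (and from $\mathcal{CD}_n$ to $\mathcal{SB}_n$) together with the closed-form expressions for $\ell(\lambda)$ and $|\lambda|$ in \eqref{eq:length} and \eqref{eq:size}. My plan is therefore to transport the uniform measure on $\mathcal{C}_n$ (resp.\ $\mathcal{CD}_n$) through $\eta$ and then substitute the corresponding variables into those formulas.

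First, I would recall that the map $\eta:\lambda\mapsto(x_1(\lambda),\ldots,x_{n-1}(\lambda))$ defined in \eqref{eq:bijection} is a bijection $\mathcal{C}_n\to\mathcal{B}_n$. Since bijections push uniform measures to uniform measures, if $\lambda$ is uniformly distributed on $\mathcal{C}_n$, then $\eta(\lambda)$ is uniformly distributed on $\mathcal{B}_n=\{0,1,\ldots,d_n\}^{n-1}$. Because the latter is a product set, this uniform distribution factors as a product of independent $\mathcal{U}(\{0,1,\ldots,d_n\})$ marginals, giving the independence claim for the first case. An identical argument applies to $\eta:\mathcal{CD}_n\to\mathcal{SB}_n$ in the second case, yielding $(X_1,\ldots,X_{n-1})\sim\mathcal{U}(\mathcal{SB}_n)$ (no independence here, since $\mathcal{SB}_n$ is not a product set).

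Next, given these distributional identifications, I would invoke \eqref{eq:length} and \eqref{eq:size}: for every partition $\lambda\in\mathcal{C}_n$ (or $\mathcal{CD}_n$), writing $x_i=x_i(\lambda)$, one has
\begin{align*}
    \ell(\lambda) &= \sum_{i=1}^{n-1} x_i,\\
    |\lambda| &= \sum_{i=1}^{n-1}\left(\frac{n-1}{2}x_i^2+\Bigl(i-\frac{n-1}{2}\Bigr)x_i\right)-\sum_{1\leq i<j\leq n-1} x_i x_j.
\end{align*}
Applying these pointwise identities to a uniformly random $\lambda$ and setting $X_i:=x_i(\lambda)$ gives the stated expressions for $L_{n,1},S_{n,1}$ (when $\lambda\sim\mathcal{U}(\mathcal{C}_n)$) and for $L_{n,2},S_{n,2}$ (when $\lambda\sim\mathcal{U}(\mathcal{CD}_n)$).

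There is no genuine obstacle here: all combinatorial work (the bijection $\eta$, the characterization of $n$-core partitions via $\beta$-sets, the perimeter constraint $x_i\leq d_n$, the strictness condition $x_i x_{i+1}=0$, and the size formula \eqref{eq:size}) has been developed in Subsection~\ref{subsec:beta set}. The only thing to verify carefully is that the uniform distribution on a Cartesian product equals the product of uniform marginals, which is immediate, and that substitution into the deterministic identities \eqref{eq:length} and \eqref{eq:size} preserves distributional equalities. The proof is therefore short and essentially a direct assembly of the preceding material.
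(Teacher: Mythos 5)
Your proposal is correct and matches the paper's own (implicit) argument: the paper derives Lemma \ref{lem:ls} exactly by pushing the uniform measure through the bijection $\eta$ of \eqref{eq:bijection} and substituting into the deterministic identities \eqref{eq:length} and \eqref{eq:size}. No gaps; the independence observation for the product set $\mathcal{B}_n$ is the only extra (and immediate) remark needed.
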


\subsection{Statistics of self-conjugate core partitions and main diagonals}\label{subsec:self-conjugate}

Let $\lambda$ be a self-conjugate partition and let $MD(\lambda)$ be the set of hook lengths of boxes on the main diagonal of $\lambda$.
It is easy to see that a self-conjugate partition $\lambda$ is uniquely determined by its main diagonal hooks. Also, the set $MD(\lambda)$ consists of odd positive numbers. Ford, Mai and Sze \cite{FMS09} gave the following characterization of $MD(\lambda)$ for a self-conjugate $n$-core partition $\lambda$.
\begin{prop}[\cite{FMS09}, Proposition 3] \label{prop:MD} 
A self-conjugate partition $\lambda$ is an $n$-core if and only if the following conditions hold: \newline
(1) $MD (\lambda)$ is a set of distinct odd integers; \newline
(2) if  $h \in MD(\lambda)$ and $h > 2n$, then $h-2n$ is also in $MD(\lambda)$;  \newline 
(3) if $h,s \in MD(\lambda)$, then $h+s \neq 0 \pmod{2n}$.
\end{prop}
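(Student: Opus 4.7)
The plan is to translate the $n$-core condition, which is naturally phrased on the $\beta$-set (as recalled in Section~\ref{subsec:beta set}), into a condition on $MD(\lambda)$ via an explicit relationship between the two that is forced by self-conjugacy.

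Condition~(1) is immediate: since $\lambda$ is self-conjugate, the main diagonal box $(i,i)$ has equal arm and leg length $\lambda_i - i$, so $h_{ii} = 2(\lambda_i - i) + 1$ is odd, and these are distinct because $\lambda_i - i$ strictly decreases in $i$ (up to the Durfee size).

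For the remaining directions, write $MD(\lambda) = \{2a_1 + 1 > 2a_2 + 1 > \cdots > 2a_d + 1\}$, where $d$ is the Durfee size and $a_i = \lambda_i - i$. Decomposing $\lambda$ into its Durfee square together with a right wing $\alpha$ and a lower wing (the transpose of $\alpha$, by self-conjugacy), a direct calculation of the first-column hook lengths, splitting $i \leq d$ (which passes through the Durfee square) from $i > d$ (which lies inside the lower wing, so is read off from the classical $\beta$-set/conjugate-partition duality applied to $\alpha$), yields the explicit formula
\begin{equation*}
\beta(\lambda) = \{a_1 + a_i + 1 : 1 \leq i \leq d\} \cup \bigl(\{1, 2, \ldots, a_1\} \setminus \{a_1 - a_i : 1 \leq i \leq d\}\bigr).
\end{equation*}
I would verify this by checking both sides on small examples and then confirming the Durfee decomposition in general.

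I would then invoke the classical criterion that $\lambda$ is $n$-core iff every $h \in \beta(\lambda)$ with $h \geq n$ satisfies $h - n \in \beta(\lambda)$, and chase it through the decomposition above. Subtracting $n$ from a ``large'' element $a_1 + a_i + 1 \geq n$ either lands in another large element (forcing $a_i - n = a_j$ for some $j$, which under $m \mapsto (m-1)/2$ is exactly condition~(2) applied to $m_i = 2a_i + 1$) or falls in the small range $\{0, 1, \ldots, a_1\}$; in the latter case, the result either equals $0$ or coincides with an excluded position $a_1 - a_j$, either of which forces $a_i + a_j + 1 \equiv 0 \pmod n$ for some $j$, equivalently $(2a_i + 1) + (2a_j + 1) \equiv 0 \pmod{2n}$, i.e.\ a failure of condition~(3). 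A parallel analysis for subtraction inside the small part of $\beta(\lambda)$ reduces, via condition~(2), to the same scenario, and conversely, assuming (1)--(3), one checks that every admissible subtraction by $n$ stays inside $\beta(\lambda)$. The main obstacle will be the bookkeeping: verifying that every forbidden collision between $a_1 + a_i + 1 - n$ and an excluded position $a_1 - a_j$ (or the boundary value $0$) corresponds precisely to a violation of (2) or (3), with no spurious constraint from the Durfee geometry. A cleaner alternative, if the direct casework grows unwieldy, is to encode $\lambda$ via its beads on the $2n$-abacus equipped with the reflection symmetry induced by self-conjugacy, under which $n$-coreness becomes top-compactness of each runner and condition~(3) becomes the absence of a bead paired with its own mirror image across the symmetry axis.
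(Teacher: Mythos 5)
The paper offers no proof of this proposition at all: it is imported verbatim from Ford--Mai--Sze \cite{FMS09}, so the only comparison to make is with the literature, and your sketch amounts to a correct, self-contained argument of the standard kind. Your structural formula is right: writing $a_i=\lambda_i-i$ for $1\le i\le d$, self-conjugacy forces the symmetry $x\in\beta(\lambda)\Leftrightarrow (2a_1+1)-x\notin\beta(\lambda)$ on $\{0,1,\dots,2a_1+1\}$, which is exactly the decomposition
\[
\beta(\lambda)=\{a_1+a_i+1:1\le i\le d\}\cup\bigl(\{1,\dots,a_1\}\setminus\{a_1-a_j:1\le j\le d\}\bigr),
\]
and your case analysis does convert the criterion ``$h\in\beta(\lambda),\ h\ge n\Rightarrow h-n\in\beta(\lambda)$'' into conditions (2) and (3): a large-to-large subtraction forces $a_i-n=a_j$, i.e.\ condition (2); landing on $0$ or on an excluded value $a_1-a_j$ forces $a_i+a_j+1=n$, i.e.\ a sum equal to $2n$ in $MD(\lambda)$; the small-element case reduces to (2) exactly as you say; and in the converse direction single subtractions suffice. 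The one point you should make explicit rather than leave to ``bookkeeping'' is that condition (3) is a congruence modulo $2n$, whereas one subtraction of $n$ only detects the exact equality $(2a_i+1)+(2a_j+1)=2n$. To rule out $a_i+a_j+1=mn$ with $m\ge 2$ for an $n$-core you must iterate: the chain $h,h-n,\dots,h-mn$ with $h=a_1+a_i+1$ stays in $\beta(\lambda)$ step by step but terminates at the excluded value $a_1-a_j$ (or at $0$), a contradiction; equivalently, induct on $m$ using condition (2), which your large-to-large case has already established. With that spelled out the proof is complete; your alternative via the $2n$-runner abacus with the reflection symmetry induced by self-conjugacy is essentially the argument used in the literature and would compress the casework, at the cost of setting up the symmetric bead model.
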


Assume that $\lambda$ is a self-conjugate $n$-core partition with perimeter at most $E_n$. Similar to the discussions of the $\beta$-set of partitions in Section \ref{subsec:beta set}, we define $MD (\lambda)_i := MD (\lambda) \cap (2 n \Z + i)$ for $0 \leq i \leq 2 n - 1$ and $x_i = \left| MD (\lambda)_i \right|$. By Proposition \ref{prop:MD}, the following holds.
\begin{enumerate}
    \item[(1)] For even $i$, we have $x_i = 0$;
    \item[(2)] For $0 \leq i \leq n$, we have $x_i x_{2 n - i} = 0$.
    \item[(3)] For $0 \leq i \leq 2 n - 1$, we have $MD (\lambda)_i = \{i, 2 n + i, \cdots, 2 (x_i - 1)n + i\}$, provided $x_i > 0$.
\end{enumerate}
So $x_0 = x_n = 0$. One can check that
\begin{align*}
    \ell (\lambda) &= \sum_{i = 1}^{n} x_{2 i - 1},\\
    \left| \lambda \right| 
    & = \sum_{h \in MD (\lambda)}^{} x 
    = \sum_{1 \leq i \leq n, i \neq \frac{ n + 1 }{ 2 } }^{} \sum_{h \in MD (\lambda)_{2 i - 1}}^{} h \\
    & = \sum_{1 \leq i \leq n, i \neq \frac{ n + 1 }{ 2 } }^{} \left( (2 i - 1) x_{2 i - 1} + n x_{2 i - 1} (x_{2 i - 1} - 1) \right) \\
    & = \sum_{1 \leq i \leq n, i \neq \frac{ n + 1 }{ 2 } }^{} \left( n x_{2 i - 1} ^ 2 + (2 i - n - 1) x_{2 i - 1} \right). 
\end{align*}
Recall that $L_{n, 3}$ and $S_{n, 3}$ denote respectively the length of the Durfee square and size of a uniform random self-conjugate $n$-core partition with perimeter at most $E_n$. \emph{Assume that $E_n$ is divisible by $2n$} and $e_n =  E_n / 2 n $. Let $\mathcal{SC}_n$ denote the set of self-conjugate $n$-core partitions with perimeters at most $E_n$ and $\mathcal{MD}_n := \{(x_1, \cdots, x_n) \in \{0, 1, \cdots, e_n\} ^ n: x_i x_{n + 1 - i} = 0\ \text{for}\ 1 \leq i \leq n \}$. The map $\zeta: \lambda \to (x_1, x_2, \cdots, x_n)$ gives a bijection from $\mathcal{SC}_n$ to $\mathcal{MD}_n$. We have the following characterization of $L_{n, 3}$ and $S_{n, 3}$.

\begin{lem} \label{lem:SelfConjugateDistribution}
Let $(X_1, \cdots, X_n)$ be uniformly distributed on $\mathcal{MD}_n$. Then we have
\begin{align*}
    L_{n, 3} & = \sum_{1 \leq i \leq n, i \neq \frac{ n + 1 }{ 2 }}^{} X_i, \\
    S_{n, 3} & = \sum_{1 \leq i \leq n, i \neq \frac{ n + 1 }{ 2 } }^{} \left( n X_i ^ 2 + (2 i - n - 1) X_i \right).
\end{align*}
\end{lem}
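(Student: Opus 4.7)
The plan is to transport the uniform distribution through the bijection $\zeta : \mathcal{SC}_n \to \mathcal{MD}_n$ already extracted from Proposition \ref{prop:MD}, and then read off the Durfee square length and size of a self-conjugate $n$-core partition directly from its main-diagonal hook data.

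First I would pin down the indexing convention. The analysis in Section \ref{subsec:self-conjugate} uses $x_i := |MD(\lambda)_i|$ for $0 \leq i \leq 2n-1$, which vanishes at every even $i$ and at $i = 0, n$. Setting $y_j := x_{2j-1}$ for $1 \leq j \leq n$ collects the nontrivial data into a vector in $\{0, 1, \cdots, e_n\}^n$, and the constraint $x_{2j-1}\, x_{2n-(2j-1)} = 0$ translates exactly to $y_j y_{n+1-j} = 0$. Thus $\zeta(\lambda) = (y_1, \cdots, y_n)$ lands in $\mathcal{MD}_n$, and Proposition \ref{prop:MD} makes $\zeta$ a bijection, since its three conditions uniquely recover $\lambda$ from $(y_1, \cdots, y_n)$. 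Pushing the uniform distribution on $\mathcal{SC}_n$ through $\zeta$ therefore yields the uniform distribution on $\mathcal{MD}_n$, which is precisely the law of $(X_1, \cdots, X_n)$ in the lemma.

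Next I would derive the two identities. For the Durfee length, I would use the classical fact that a self-conjugate partition has Durfee square length equal to the number of boxes on its main diagonal, so $L_{n,3} = |MD(\lambda)| = \sum_{j=1}^n y_j$. For the size, I would invoke the standard hook-decomposition of self-conjugate partitions: the hooks rooted at the main diagonal are disjoint and together tile the Young diagram, so $|\lambda| = \sum_{h \in MD(\lambda)} h$. By Proposition \ref{prop:MD}, whenever $y_j > 0$, the slice $MD(\lambda)_{2j-1}$ is the arithmetic progression $\{(2j-1) + 2kn : 0 \leq k \leq y_j - 1\}$, whose sum evaluates to
\[
\sum_{k=0}^{y_j-1}\bigl((2j-1)+2kn\bigr) = (2j-1) y_j + n y_j (y_j - 1) = n y_j^2 + (2j - n - 1) y_j .
\]
Summing over $j \in [n]$ yields the claimed expression for $S_{n,3}$.

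The only subtlety, and thus the main (mild) obstacle, is handling the parity of $n$ in the excluded index $j = (n+1)/2$. When $n$ is odd, this index corresponds to $i = n$ in the original indexing and is forced to vanish by the self-conjugacy condition $y_{(n+1)/2}^2 = 0$ rather than by the ambient bound $y_j \leq e_n$; when $n$ is even the index is not an integer and its exclusion is vacuous. Either way, dropping this single vanishing term from both sums matches the notation in the statement and completes the reduction.
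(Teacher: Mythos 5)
Your proposal is correct and follows essentially the same route as the paper: Section \ref{subsec:self-conjugate} likewise uses Proposition \ref{prop:MD} to identify each residue class $MD(\lambda)_{2i-1}$ with the arithmetic progression $\{2i-1, 2n+2i-1, \cdots, 2(x_{2i-1}-1)n+2i-1\}$, sums it to get $n x_{2i-1}^2 + (2i-n-1)x_{2i-1}$, and transports the uniform measure through the bijection $\zeta:\mathcal{SC}_n\to\mathcal{MD}_n$. Your explicit treatment of the excluded index $i=\frac{n+1}{2}$ and of the diagonal-hook tiling only makes precise steps the paper leaves implicit.
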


\subsection{Stein's method based on Hoeffding decomposition}


Given a family of independent random variables $\{X_i\}_{i=1}^n$, the family
$\{\mathcal F_J\}_{J\subset [n]}$ of $\sigma$-algebras are defined as $\mathcal F_J:=\sigma (X_j\ : \ j\in J)$.

\begin{defn} \label{def:HoeffdingDecomposition}
A centered $\mathcal F_{[n]}$-measurable random variable $W_n$ admits a Hoeffding decomposition if it can be written as 
\begin{align} \label{eq:Hd}
    W_n=\sum_{J\subset [n]}W_J,
\end{align}where $\{W_J\}_{J\subset [n]}$ is a family of random variables such that $W_J$ is ${\cal F}_J$-measurable, and 
\begin{align*}
    \E[W_J \mid \mathcal{F}_K]=0, \qquad J\not\subseteq K\subset [n].
\end{align*}
\end{defn}

We need the following result from \cite{PS22}.

\begin{prop}[\cite{PS22}, Theorem 4.1] \label{prop:hoePrivault}
Let $1 \leq d \leq n$. Let $W_n$ be a centered random variable with unit variance and finite fourth moment. Suppose that $W_n$ admits the Hoeffding decomposition \eqref{eq:Hd} with $|J|\leq d$. Then there exists a constant $C_d>0$ depending only on $d$ such that
\begin{align*}
d_{W / K} (W_n, \mathcal{N}\left( 0, 1\right) ) 
&\leq C_d  \left\{ \sum_{0 \leq l < i \leq d} \sum_{| J | = i - l} \E \left[ \left( \sum_{\substack{| K | = l\\ K \cap J = \emptyset}} \E \left[ W_{J \cup K}^2 \mid \mathcal{F}_J \right]  \right) ^ 2  \right] \right. \\
& ~~~+ \sum_{1 \leq l < i \leq d} \sum_{\substack{J_1 \cap J_2 = \emptyset\\| J_1 | = | J_2 | = i - l}} \E \left[ \left( \sum_{\substack{| K | = l\\ K \cap (J_1 \cup J_2) = \emptyset}} \E \left[ W_{J_1 \cup K} W_{J_2 \cup K} \mid \mathcal{F}_{J_1 \cup J_2} \right]  \right) ^2 \right] \\
&~~~ \left. + \sum_{1\leq l < i \leq d} \sum_{| J | = i - l} \E \left[ \left( \sum_{\substack{| K | = l\\ K \cap J = \emptyset}} \E \left[ W_K W_{J \cup K} \mid \mathcal{F}_J \right]  \right) ^2 \right] \right\} ^ {1 / 2}.
\end{align*}
\end{prop}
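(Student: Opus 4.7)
The proof plan is to deploy Stein's method through the Malliavin-type calculus on the underlying product probability space $(\Omega, \mathcal{F}_{[n]}, \P)$. I would first introduce the discrete Malliavin derivative $D_i W_n := W_n - \E[W_n \mid \mathcal{F}_{[n]\setminus\{i\}}]$, which, thanks to the orthogonality property in Definition \ref{def:HoeffdingDecomposition}, admits the clean expression $D_i W_n = \sum_{J \ni i} W_J$. Dual to it is the pseudo-inverse of the Ornstein--Uhlenbeck generator, which acts on Hoeffding components as $(-L^{-1}) W_J = W_J / |J|$, so that $-D_i L^{-1} W_n = \sum_{J \ni i} W_J / |J|$. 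The Stein--Malliavin covariance identity then gives $\E[W_n f(W_n)] = \E[f'(W_n)\,\Gamma_n]$ (up to a finite-difference correction absorbed into the constant $C_d$), where
\begin{align*}
\Gamma_n \;=\; \sum_{i=1}^n (D_i W_n)\bigl(-D_i L^{-1} W_n\bigr) \;=\; \sum_{\substack{J_1, J_2 \\ J_1 \cap J_2 \neq \emptyset}} \frac{|J_1 \cap J_2|}{|J_2|}\, W_{J_1} W_{J_2}.
\end{align*}

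The next step is to reduce normal approximation to variance control of $\Gamma_n$. Since orthogonality of the Hoeffding components yields $\E[\Gamma_n] = \Var(W_n) = 1$, the standard Stein bound for both the Kolmogorov and Wasserstein distances gives $d_{W/K}(W_n, \mathcal{N}(0,1)) \lesssim \sqrt{\Var(\Gamma_n)}$. Expanding $\Var(\Gamma_n)$ as a quadruple sum over $(J_1, J_2, J_1', J_2')$, I would eliminate vanishing terms via the \emph{covering property} of Hoeffding products: $\E[W_{J_1} W_{J_2} W_{J_1'} W_{J_2'}] = 0$ unless every index appearing in some $J_r^{(\prime)}$ is covered by at least one other set in the quadruple; otherwise one can peel off a $W_J$ and condition on complementary variables to make the expectation vanish. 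This restricts attention to a combinatorially constrained family of quadruples.

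The combinatorial heart of the proof is then to repackage the surviving quadruples. Writing each pair as a disjoint union of a common part $K$ and a symmetric-difference part, parametrized by $l = |K|$ and $i = |J \cup K|$ with $0 \leq l < i \leq d$, yields three qualitatively distinct configurations: (i) matched pairs, which after applying the tower property produce $\E\bigl[(\sum_{K}\E[W_{J\cup K}^2 \mid \mathcal{F}_J])^2\bigr]$; (ii) nontrivially overlapping but distinct pairs, producing the term with $J_1 \cap J_2 = \emptyset$ and a joint product $W_{J_1 \cup K} W_{J_2 \cup K}$; and (iii) an asymmetric pairing producing the linear cross term $\E[W_K W_{J \cup K} \mid \mathcal{F}_J]$. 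Cauchy--Schwarz, Jensen, and the tower property then assemble $\Var(\Gamma_n)$ into the square of the right-hand side of the claimed bound. The main obstacle is precisely this bookkeeping step: verifying that every surviving contribution falls into exactly one of the three configurations with the correct conditioning $\sigma$-algebra, and that the combinatorial weights are absorbed into the universal constant $C_d$. A secondary technical point is that in the discrete product-space setting the Malliavin derivative $D_i$ is not a true derivation, so the Stein identity carries a remainder depending on the Hoeffding order $d$; this is the source of $C_d$ and is handled by a telescoping finite-difference argument.
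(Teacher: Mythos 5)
This proposition is not proved in the paper at all: it is quoted verbatim from \cite{PS22} (Theorem 4.1) and used as a black box, so there is no internal argument to compare yours against. Your sketch does follow the same general route as the cited source — Stein's method combined with a discrete Malliavin/Ornstein--Uhlenbeck structure built on the Hoeffding decomposition — and the algebraic setup is sound: $D_i W_n=\sum_{J\ni i}W_J$, $-D_iL^{-1}W_n=\sum_{J\ni i}W_J/|J|$, the formula for $\Gamma_n$, and $\E[\Gamma_n]=\Var(W_n)=1$ by orthogonality are all correct.

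As a proof, however, the proposal has genuine gaps at exactly the points where the real work lies. First, the ``Stein--Malliavin covariance identity'' $\E[W_nf(W_n)]=\E[f'(W_n)\Gamma_n]$ is false as stated in the discrete setting, and the error is not a deterministic quantity that can be ``absorbed into the constant $C_d$'': because $D_i$ is not a derivation, the integration by parts produces a random remainder (second-order difference terms of $f$ evaluated along the components), and bounding its expectation is where terms such as $\E[W_J^4]$ (the $l=0$ part of the first sum) enter the right-hand side; your telescoping remark does not explain how these remainders are dominated by the three stated families of terms. Second, the Kolmogorov distance cannot be dispatched by the same one-line reduction $d_{W/K}\lesssim\sqrt{\Var(\Gamma_n)}$: for indicator test functions the Stein solution is not smooth, the remainder involves evaluating the solution across its discontinuity, and handling this (typically via concentration or smoothing arguments) is a separate and substantial part of \cite{PS22}; your bound as written only plausibly covers the Wasserstein case. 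Third, the combinatorial reduction of $\Var(\Gamma_n)$ to the three configurations — which you yourself flag as ``the main obstacle'' — is asserted rather than carried out, including the covering/vanishing criterion and the verification that the conditioning $\sigma$-algebras $\mathcal{F}_J$, $\mathcal{F}_{J_1\cup J_2}$ come out as in the statement. So the plan is the right one, but the proposal stops short of a proof precisely at its three hardest steps.
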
 
  





\subsection{Hoeffding's combinatorial central limit theorem} \label{subsec:HoeCCLT}

Let $A=(a_{ij})_{i, j=1}^m$ be an $m\times m$ matrix of real numbers. Let $\pi$ be a uniform random permutation of $[m]$ and let $W=\sum_{i = 1}^{m} a_{i\pi(i)}$. Hoeffding \cite{H51} proved the central limit theorem for $W$. Now we introduce a quantitative version of Hoeffding's classical result, which will be used in our proof of the main theorems. Let 
$$
a_{i\cdot} = \frac{1}{m}\sum_{j=1}^{m} a_{ij},~~~~ a_{\cdot j} = \frac{1}{m}\sum_{i=1}^{m} a_{ij},~~~~
a_{\cdot \cdot} = \frac{1}{m^2}\sum_{i,j = 1}^{m} a_{ij}
$$
and $\dot{a_{ij}} = a_{ij} - a_{i\cdot} - a_{\cdot j} + a_{\cdot \cdot}$. We have the following proposition.


\begin{prop}[\cite{B84,CF15,G07,H51}] \label{prop:hoePrivaultcclt}
    For $m \ge 3$ and all $A$ with $\sigma_W^2 > 0$, we have
    \begin{align} \label{eq:cclt}
        d_{W / K} \left( \mathcal{L} ( \widehat{W} ), \mathcal{N}\left( 0, 1 \right)  \right) 
        \leq \frac{451}{m \sigma_W^3} \sum_{i, j=1}^{m} \left| \dot{a_{ij}} \right| ^3. 
    \end{align}
\end{prop}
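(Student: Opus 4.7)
The plan is to prove this quantitative combinatorial CLT via Stein's method of exchangeable pairs.

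First I would reduce to the doubly-centered case. For any permutation $\pi$, direct computation gives $\sum_i a_{i\pi(i)} = \sum_i \dot{a}_{i\pi(i)} + m\, a_{\cdot\cdot}$, since $\sum_i a_{i\cdot}$, $\sum_i a_{\cdot \pi(i)}$ and $m a_{\cdot\cdot}$ are all deterministic. Replacing $a$ by $\dot a$ therefore only shifts $W$ by a constant, leaves $\sigma_W^2$ and the right-hand side of \eqref{eq:cclt} unchanged. Hence I may assume that $a$ is doubly centered, i.e.\ $a_{i\cdot}=a_{\cdot j}=0$ (so $a=\dot a$), and $\E[W]=0$.

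Next I would construct the standard exchangeable pair: given $\pi$, draw $(I,J)$ uniformly from the unordered pairs of distinct indices in $[m]$, set $\pi'=\pi\circ (I\,J)$, and $W'=\sum_i a_{i\pi'(i)}$. Then $(W,W')$ is exchangeable, and the only rows affected by the swap are $I$ and $J$, so
$$W - W' = a_{I\pi(I)} + a_{J\pi(J)} - a_{I\pi(J)} - a_{J\pi(I)}.$$
Averaging over $(I,J)$ and using the double-centering identities $\sum_k a_{kj}=\sum_k a_{ik}=0$, the sums telescope to give the clean linear-regression relation $\E[W-W'\mid \pi] = \tfrac{2}{m-1}\, W$, so the pair is Stein-linear with $\lambda=2/(m-1)$.

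I would then invoke the general exchangeable-pair machinery in both Kolmogorov and Wasserstein-$1$ forms, which bounds $d_{W/K}(\mathcal{L}(\widehat W),\mathcal{N}(0,1))$ in terms of $\bigl(\mathrm{Var}\!\bigl(\tfrac{1}{2\lambda}\E[(W-W')^2\mid\pi]\bigr)\bigr)^{1/2}/\sigma_W^2$ and $\tfrac{1}{\lambda\sigma_W^3}\E|W-W'|^3$. Once conditioned on $\pi$ and averaged over $(I,J)$, both error terms reduce, via the double-centering of $\dot a$ together with Cauchy--Schwarz and power-mean estimates, to sums controlled by $m^{-1}\sum_{i,j}|\dot a_{ij}|^3$, giving an overall bound of the form $C \cdot m^{-1}\sigma_W^{-3}\sum_{i,j}|\dot a_{ij}|^3$.

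The main obstacle will be the sharp numerical bookkeeping needed to obtain the specific constant $451$ that simultaneously handles both Kolmogorov and Wasserstein distances; for this I would defer directly to the cited works \cite{B84,CF15,G07,H51}, where exactly this constant is established by delicate refinements of the concentration-inequality variant of Stein's method. Since the result is used only as a black box in our later proofs, rederiving the optimal constant here would be unnecessary.
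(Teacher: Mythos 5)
The paper does not prove this proposition at all: it is imported verbatim from the literature, with the remark after Lemma \ref{lem:hoeffccltexample} recording that the Kolmogorov bound is due to Bolthausen \cite{B84} with the constant $451$ supplied by Chen and Fang \cite{CF15}, and the Wasserstein bound to Goldstein \cite{G07} via zero-bias coupling. Since you ultimately defer to exactly these references and note the result is used only as a black box, your treatment matches the paper's.

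Two cautions about the sketch you do give. First, the reduction to the doubly-centered array and the exchangeable-pair regression identity $\E[W-W'\mid\pi]=\tfrac{2}{m-1}W$ are correct, but the claim that both error terms then ``reduce, via Cauchy--Schwarz and power-mean estimates, to sums controlled by $m^{-1}\sum_{i,j}|\dot a_{ij}|^3$'' hides the genuinely hard step: the conditional-variance term $\mathrm{Var}\bigl(\E[(W-W')^2\mid\pi]\bigr)$ does not succumb to such routine estimates at the third-moment order with a usable constant. This is precisely why Bolthausen needed an inductive argument and Chen--Fang a concentration-inequality refinement of Stein's method; so this part is more than ``sharp numerical bookkeeping.'' Second, Goldstein's $W_1$ bound is obtained by zero-bias coupling rather than by the exchangeable-pair machinery you describe, so attributing the Wasserstein half of \eqref{eq:cclt} to the same exchangeable-pair bound is a slight misstatement. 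Neither issue affects the paper, since (like the paper) you rely on \cite{B84,CF15,G07,H51} for the actual inequality, but the sketch should not be read as a self-contained proof.
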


\begin{lem} \label{lem:hoeffccltexample}
Let $J$ be a uniform random element of $\binom{[m]}{k}$ and set $W_{m, k} = \sum_{x \in J}^{} x$. Then we have
\begin{align*}
    d_{W / K} \left( \mathcal{L} \big(\widehat{W_{m, k}}\big), \mathcal{N}( 0, 1 )  \right) 
    \lesssim \sqrt{ \frac{m}{k (m - k)} }.
\end{align*}
\end{lem}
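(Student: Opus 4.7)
The plan is to realize $W_{m,k}$ as a Hoeffding permutation statistic and then apply Proposition \ref{prop:hoePrivaultcclt} directly. Define the $m \times m$ matrix $A = (a_{ij})$ by $a_{ij} = j$ if $i \leq k$ and $a_{ij} = 0$ if $i > k$. If $\pi$ is a uniform random permutation of $[m]$, then $\{\pi(1), \ldots, \pi(k)\}$ is a uniform random $k$-subset of $[m]$, so $\sum_{i=1}^m a_{i\pi(i)} = \sum_{i=1}^k \pi(i)$ has the same distribution as $W_{m,k}$.

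Next I would compute the row/column means explicitly: $a_{i\cdot} = (m+1)/2$ for $i \leq k$ and $a_{i\cdot} = 0$ otherwise; $a_{\cdot j} = kj/m$; and $a_{\cdot\cdot} = k(m+1)/(2m)$. A short calculation yields
\begin{align*}
\dot{a}_{ij} = \frac{m-k}{m}\Bigl(j - \tfrac{m+1}{2}\Bigr) \text{ for } i \leq k, \qquad \dot{a}_{ij} = -\frac{k}{m}\Bigl(j - \tfrac{m+1}{2}\Bigr) \text{ for } i > k.
\end{align*}
Summing over $i,j$, and using the elementary estimate $\sum_{j=1}^m |j - (m+1)/2|^3 \lesssim m^4$, gives
\begin{align*}
\sum_{i,j=1}^m |\dot{a}_{ij}|^3 \lesssim m^4 \cdot \frac{k(m-k)\bigl(k^2 + (m-k)^2\bigr)}{m^3} = m\, k(m-k)\bigl(k^2 + (m-k)^2\bigr).
\end{align*}

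For the variance, I would use the classical sampling-without-replacement formula: $\sigma_W^2 = \tfrac{k(m-k)(m+1)}{12}$, whence $\sigma_W^3 \gtrsim \bigl(k(m-k)\bigr)^{3/2} m^{3/2}$. Plugging everything into \eqref{eq:cclt} and using $k^2 + (m-k)^2 \leq m^2$ yields
\begin{align*}
d_{W/K}\bigl(\mathcal{L}(\widehat{W_{m,k}}),\, \mathcal{N}(0,1)\bigr)
\lesssim \frac{1}{m \sigma_W^3}\sum_{i,j} |\dot{a}_{ij}|^3
\lesssim \frac{k^2 + (m-k)^2}{\bigl(k(m-k)\bigr)^{1/2} m^{3/2}}
\lesssim \sqrt{\frac{m}{k(m-k)}},
\end{align*}
which is the claim. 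There is no real obstacle here: the argument is essentially just picking the right matrix $A$ so that Hoeffding's bound applies, and then a careful but routine computation of the row/column corrections $\dot{a}_{ij}$ and of $\sigma_W^2$. The only care needed is to keep the dependence on both $k$ and $m-k$ symmetric so that the final bound degenerates correctly in both the sparse ($k$ small) and dense ($k$ close to $m$) regimes.
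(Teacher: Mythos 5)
Your proposal is correct and follows essentially the same route as the paper: your matrix $a_{ij}=j\cdot\mathbf{1}\{i\le k\}$ is exactly the outer product of $(1^k,0^{m-k})$ and $(1,\dots,m)$ used there, and the computation of $\dot a_{ij}$, the variance $k(m-k)(m+1)/12$, and the application of Proposition \ref{prop:hoePrivaultcclt} all coincide with the paper's argument (the paper simply bounds $k^2+(m-k)^2\le m^2$ one step earlier).
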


\begin{proof}[Proof of Lemma \ref{lem:hoeffccltexample}]
    
    Let $\alpha = (1^k, 0^{m-k})$, i.e., $\alpha_1 = \alpha_2 = \cdots = \alpha_k = 1$ and $\alpha_{k + 1} = \cdots = \alpha_m = 0$, and $x = (1, 2, \cdots, m)$. Let $A = (a_{ij})_{i, j=1}^m$ be the outer product of $\alpha$ and $x$. Then we have $W_{m, k} = \sum_{i = 1}^{m} a_{i \pi (i)}$, where $\pi$ is a uniform random permutation of $[m]$. One can check that
    \begin{align*}
        \dot{a_{ij}} &= \left( \alpha_i - \frac{k}{m} \right) \left( x_j - \frac{m + 1}{2} \right), \\
        \sigma_{W_{m, k}}^2 & = \frac{1}{m - 1} \sum_{i, j=1}^{m} \dot{a_{ij}}^2 = \frac{k (m - k) (m + 1)}{12}, 
    \end{align*}
    and
    \begin{align*}
        \sum_{i, j=1}^{m} \left|\dot{a_{ij}}\right|^3 
        & = \sum_{i=1}^{m} \left| \alpha_i - \frac{k}{m} \right|^3 \cdot\sum_{j=1}^{m} \left| x_j - \frac{m + 1}{2} \right|^3 \\
        & \leq m^4 \left( k \frac{(m - k)^3}{m^3} + (m - k) \frac{k^3}{m^3} \right) \\
        & \leq m ^ 3 k (m - k).
    \end{align*}Then the desired inequality follows by plugging the corresponding quantities into Proposition \ref{prop:hoePrivaultcclt}.
\end{proof}

\begin{rmk}
The bound for the Kolmogorov distance in \eqref{eq:cclt} was first obtained by Bolthausen \cite{B84} without the explicit constant. The constant $451$ was obtained by Chen and Fang \cite{CF15}. Goldstein \cite{G07} obtained the bound for the Wasserstein $W_1$ distance using zero-bias coupling. 
\end{rmk}

\subsection{P\'{o}lya frequency sequences} \label{sec:PolyaFrequence}

Let $\mathbf{p} = (p_0, p_1, \cdots, p_m)$ be a sequence of nonnegative real numbers summing up to $1$. Let $f(x) = \sum_{k=0}^{m} p_k x^k$ be its generating function. The sequence $\mathbf{p}$ is called a \emph{P\'{o}lya frequency (PF) sequence} if and only if the polynomial $f(x)$ is constant or has only real roots. PF sequences arise in combinatorics quite often, see Harper \cite{H67}.

Suppose that $\mathbf{p}$ is a PF sequence. Let $\mu = \sum_{i=1}^{m} i p_i$ and $\sigma^2 = \sum_{i=1}^{m} i^2 p_i - \mu^2$ denote respectively the mean and variance of the probability distribution generated by $\mathbf{p}$. We have the following tail bound for $\mathbf{p}$ from Pitman \cite{P97}.

\begin{prop}[\cite{P97}, Proposition 1] \label{prop:polyafreqdef}
    Let $(a_0, \cdots, a_n)$ be a sequence of nonnegative real numbers associated with the polynomial $A (z) := \sum_{k = 0}^{n} a_k z^k$ such that $A (1) > 0$. The following conditions are equivalent:
    \begin{enumerate}
        \item[(i)] The polynomial $A (z)$ is either constant or has only real roots;
        \item[(ii)] The normalized sequence $( \frac{ a_0 }{ A (1) }, \cdots, \frac{ a_n }{ A (1) } )$ is the distribution of the number $S_n$ of successes in $n$ independent trials with probability $p_i$ of success on the $i$-th trial, for some sequence of probabilities $0 \leq p_i \leq 1$. The roots of $A (z)$ are then given by $-(1 - p_i)/p_i$ for $i$ with $p_i > 0$.   
    \end{enumerate}
\end{prop}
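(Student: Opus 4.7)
\textbf{Proof proposal for Proposition \ref{prop:polyafreqdef}.} The statement is an equivalence of two purely algebraic descriptions of the same polynomial, so my plan is to translate each side into an explicit factorization of $A(z)/A(1)$ into linear factors of the form $(1-p) + pz$ and match coefficients. No probabilistic or analytic machinery is needed beyond the observation that the generating function of a sum of independent random variables is the product of their generating functions.

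For the direction (ii) $\Rightarrow$ (i), I would start from the hypothesis that $a_k/A(1) = \mathbb{P}(S_n = k)$ where $S_n = \sum_{i=1}^n B_i$ and $B_i \sim \mathrm{Bernoulli}(p_i)$ are independent. Since $\mathbb{E}[z^{B_i}] = (1-p_i) + p_i z$, independence gives
\begin{equation*}
\frac{A(z)}{A(1)} = \mathbb{E}[z^{S_n}] = \prod_{i=1}^{n}\bigl((1-p_i)+p_i z\bigr).
\end{equation*}
Each factor with $p_i > 0$ is linear with a real root at $-(1-p_i)/p_i$, while factors with $p_i = 0$ contribute the constant $1$. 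Hence $A(z)$ is either constant (if all $p_i = 0$) or a scalar multiple of a product of linear factors with real roots, proving (i) together with the explicit root formula.

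For the direction (i) $\Rightarrow$ (ii), I would first observe that any nonnegative real root $r > 0$ of $A$ would force $\sum_k a_k r^k = 0$, and since $a_k \geq 0$ and $r^k > 0$, this would give $a_k = 0$ for all $k$, contradicting $A(1) > 0$. Thus every real root is $\leq 0$. Writing $m = \deg A$ and letting $a_m > 0$ be the leading coefficient, under (i) I can factor
\begin{equation*}
A(z) = a_m \prod_{j=1}^{m}(z + r_j), \qquad r_j \geq 0.
\end{equation*}
Normalizing each factor by $1+r_j$ and setting $p_j := 1/(1+r_j) \in (0,1]$, so that $1-p_j = r_j/(1+r_j)$, I obtain
\begin{equation*}
\frac{A(z)}{A(1)} = \prod_{j=1}^{m}\bigl((1-p_j) + p_j z\bigr).
\end{equation*}
If $m < n$, I pad with $n - m$ extra Bernoullis having parameter $0$ (which contribute trivial factors equal to $1$), producing exactly $n$ independent Bernoulli trials whose sum $S_n$ has generating function $A(z)/A(1)$. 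Reading off coefficients gives $a_k/A(1) = \mathbb{P}(S_n = k)$, and the roots match the claimed form $-(1-p_j)/p_j$ for indices with $p_j < 1$; the case $p_j = 1$ corresponds to $r_j = 0$, i.e.\ a root at the origin.

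I do not expect a genuine obstacle: the equivalence is essentially a coefficient-by-coefficient translation. The only points demanding a little care are (a) ruling out positive real roots via nonnegativity of the coefficients, (b) handling degenerate situations such as $a_m = 0$ for $m < n$, $p_j \in \{0,1\}$, and $A$ being a nonzero constant, and (c) keeping track of the bijection between the roots of $A$ and the Bernoulli parameters so that the root formula $-(1-p_i)/p_i$ is stated consistently for indices with $p_i > 0$.
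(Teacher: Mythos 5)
Your proposal is correct. Note that the paper itself gives no proof of Proposition \ref{prop:polyafreqdef}: it is quoted verbatim from Pitman \cite{P97}, and your argument is exactly the standard one behind that result — identify $A(z)/A(1)$ with the probability generating function $\prod_i\bigl((1-p_i)+p_iz\bigr)$ of a sum of independent Bernoulli variables, and in the converse direction use nonnegativity of the coefficients together with $A(1)>0$ to force all real roots to be nonpositive, then normalize each linear factor $(z+r_j)/(1+r_j)$ to read off $p_j=1/(1+r_j)$ and pad with Bernoulli$(0)$ trials up to $n$. Two cosmetic points: the phrase ``nonnegative real root $r>0$'' should simply say ``positive real root''; and your caveat about $p_j=1$ is unnecessary, since the stated root formula $-(1-p_i)/p_i$ already covers that case, giving the root $0$ when $r_j=0$.
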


\begin{prop}[\cite{P97}, inequality (11)] \label{prop:tail}
    For all $r > 0$, we have
    \begin{align*}
        \sum_{0 \leq i \leq \mu - r}^{} p_i \leq \exp \left( - \frac{r^2}{2 \mu} \right). 
    \end{align*}
\end{prop}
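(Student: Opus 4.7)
The plan is to reduce the desired tail bound to a classical lower-tail Chernoff inequality for sums of independent Bernoulli variables. By Proposition \ref{prop:polyafreqdef}, the PF hypothesis on $\mathbf{p}$ yields a representation of $(p_0, p_1, \ldots, p_m)$ as the probability mass function of $S_m = \sum_{i=1}^m B_i$, where $B_1, \ldots, B_m$ are independent Bernoulli random variables with parameters $q_1, \ldots, q_m \in [0,1]$; in particular, $\mu = \sum_{i=1}^m q_i$. I would observe that the left-hand side of the claimed inequality equals $\P(S_m \leq \mu - r)$, and that the statement is vacuous when $r > \mu$, so I would restrict attention to $0 < r \leq \mu$.

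The main step is to apply the exponential Markov inequality: for each $t > 0$,
\[
\P(S_m \leq \mu - r) \leq e^{t(\mu - r)} \E[e^{-tS_m}] = e^{t(\mu - r)} \prod_{i=1}^m \bigl(1 - q_i(1 - e^{-t})\bigr) \leq \exp\bigl(\mu(t - 1 + e^{-t}) - tr\bigr),
\]
where the final bound comes from applying $1 - x \leq e^{-x}$ factor by factor. Minimizing the exponent in $t > 0$ then leads to the choice $t^{\ast} = -\ln(1 - r/\mu)$, and after simplification the bound takes the form $\exp(-\mu\, h(r/\mu))$ with $h(x) := (1-x)\ln(1-x) + x$.

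To finish, I would verify the elementary inequality $h(x) \geq x^2/2$ on $[0,1]$ by Taylor's theorem: $h(0) = h'(0) = 0$ and $h''(x) = 1/(1-x) \geq 1$ on $[0,1)$. Taking $x = r/\mu$ yields $\mu\, h(r/\mu) \geq r^2/(2\mu)$, from which the desired inequality $\P(S_m \leq \mu - r) \leq \exp(-r^2/(2\mu))$ follows at once. There is no serious conceptual obstacle in this argument; it is the standard Chernoff bound for a sum of independent Bernoullis, packaged through the Bernoulli representation supplied by Proposition \ref{prop:polyafreqdef}. The only care needed is minor bookkeeping when some $q_i$ equals $0$ or $1$ (which merely degenerates the product) and the boundary case $r = \mu$, which is handled by continuity.
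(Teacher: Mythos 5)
Your argument is correct and is essentially the argument behind the cited result: the paper itself gives no proof of Proposition \ref{prop:tail}, quoting it from Pitman, and Pitman's inequality (11) rests on exactly the same reduction you use, namely the Bernoulli representation from Proposition \ref{prop:polyafreqdef} followed by the standard lower-tail Chernoff bound $\exp(-\mu h(r/\mu))$ with $h(x)=(1-x)\ln(1-x)+x\ge x^2/2$. Your handling of the degenerate cases ($r\ge\mu$, $q_i\in\{0,1\}$) is also fine, so nothing further is needed.
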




\begin{coro} \label{coro:weightPolyaFrequence}
Let $U$ be the random variable defined in \eqref{eq:rv-U}. Then there exist independent Bernoulli random variables $Y_1, \cdots, Y_{\lfloor n/2\rfloor}$ such that $U = \sum_{1 \leq i \leq \lfloor n /2 \rfloor }^{} Y_i$.
\end{coro}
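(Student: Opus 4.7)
The plan is to apply Proposition \ref{prop:polyafreqdef} to the probability generating polynomial of $U$. From \eqref{eq:rv-U}, up to the normalization by $|\mathcal{SB}_n|$, this polynomial is
\[
A(z) = \sum_{k=0}^{\lfloor n/2\rfloor} \binom{n-k}{k} d_n^k z^k,
\]
and Proposition \ref{prop:polyafreqdef} says that realizing $U$ as the number of successes in $\lfloor n/2\rfloor$ independent Bernoulli trials is equivalent to $A(z)$ having only real roots. Since $d_n>0$, the positive rescaling $y=d_n z$ reduces this to the real-rootedness of the Fibonacci-type polynomial
\[
F_n(y) := \sum_{k=0}^{\lfloor n/2\rfloor} \binom{n-k}{k} y^k.
\]

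The next step is to establish the three-term recursion
\[
F_n(y) = F_{n-1}(y) + y F_{n-2}(y), \qquad F_0(y) = F_1(y) = 1,
\]
which is immediate from Pascal's identity $\binom{n-k}{k} = \binom{n-1-k}{k} + \binom{n-2-(k-1)}{k-1}$ after an index shift. From here there are two natural routes to conclude. The cleanest is to observe that $\binom{n-k}{k}$ counts the number of $k$-matchings in the path graph $P_n$, so $F_n$ is exactly the matching generating polynomial of $P_n$, and then invoke the Heilmann--Lieb theorem, which guarantees that the matching polynomial of any graph has only real roots. Alternatively, one proves by induction on $n$ the stronger statement that the negative roots of $F_{n-1}$ and $F_{n-2}$ are all real and strictly interlace; the recurrence then forces a sign change of $F_n$ at each root of $F_{n-1}$, yielding $\lfloor n/2\rfloor$ simple negative roots of $F_n$ and propagating the interlacing to the next step.

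Because every coefficient of $F_n$ is positive, any real root is automatically negative, which matches the formula $-(1-p_i)/p_i$ in Proposition \ref{prop:polyafreqdef} and produces probabilities $p_i\in(0,1]$. The desired decomposition $U=\sum_{i=1}^{\lfloor n/2\rfloor} Y_i$ then follows by reading off the trials given by the Proposition. The only point that needs genuine care is the interlacing bookkeeping in the induction (or, in the matching-polynomial approach, matching the sign convention in Heilmann--Lieb with our all-positive generating polynomial); I do not foresee this as a substantive obstacle, since both arguments are standard.
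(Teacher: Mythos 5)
Your proposal is correct, and it shares the paper's skeleton — both reduce the claim to real-rootedness of the probability generating polynomial of $U$ and then invoke Proposition \ref{prop:polyafreqdef} — but it verifies the real-rootedness by a genuinely different route. The paper writes the generating function in closed (Binet-type) form,
\[
g_n (z) = \frac{ 1 }{ \sqrt{ 1 + 4 d_n z } } \left( \left( \frac{ 1 + \sqrt{ 1 + 4 d_n z } }{ 2 } \right) ^ {n + 1} - \left( \frac{ 1 - \sqrt{ 1 + 4 d_n z } }{ 2 } \right) ^ {n + 1} \right),
\]
and reads off directly that all roots are real and lie in $z \leq -1/(4 d_n)$, where the two terms have equal modulus and cancel. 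You instead rescale by $y = d_n z$ (harmless since $d_n>0$), derive the three-term recursion $F_n = F_{n-1} + y F_{n-2}$ from Pascal's identity, and conclude real-rootedness either by an interlacing induction or by recognizing $F_n$ as the matching generating polynomial of the path $P_n$ and citing Heilmann--Lieb. Both are sound; your degree and sign bookkeeping in the interlacing induction works out (with the parity-dependent degrees $\lfloor n/2\rfloor$ versus $\lfloor (n-1)/2\rfloor$), and positivity of the coefficients indeed forces all roots to be strictly negative, giving $p_i\in(0,1)$ in Pitman's characterization. What each buys: the paper's argument is shorter once the closed form is accepted, since the roots are located explicitly; your recursion/interlacing version is more self-contained (no closed-form identity to verify), and the matching-polynomial identification is more general, at the cost of importing Heilmann--Lieb and translating its sign convention. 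A minor side note: your polynomial correctly includes the $k=0$ term, whereas the paper's displayed sum starts at $k=1$, which is a small slip there rather than in your write-up.
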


\begin{proof}
The distribution of $U$ is $\big\{\binom{n - k}{k} d_n ^ k\big\}_{1\leq k \leq \lfloor n/2\rfloor}$, whose generating function is
    \begin{align*}
        g_n (z) = \sum_{k=1}^{\lfloor n/2\rfloor} \binom{n - k}{k} d_n ^ k z ^ k = \frac{ 1 }{ \sqrt{ 1 + 4 d_n z } } \left( \left( \frac{ 1 + \sqrt{ 1 + 4 d_n z } }{ 2 } \right) ^ {n + 1} - \left( \frac{ 1 - \sqrt{ 1 + 4 d_n z } }{ 2 } \right) ^ {n + 1} \right).
    \end{align*}
It is not hard to see that $g_n (z)$ has only real roots $z \leq - 1/ (4 d_n) $ such that $\left| 1 + \sqrt{ 1 + 4 d_n z } \right| = \left| 1 - \sqrt{ 1 + 4 d_n z } \right|$. Then the statement follows from Proposition \ref{prop:polyafreqdef}. 
\end{proof}



\begin{lem} \label{lem:weimean}
Let $U$ be the random variable defined in \eqref{eq:rv-U}. We have 
$$
\frac{5 - \sqrt{ 5 }}{10} n + O (1) \leq \E [U] \leq \frac{n}{2}.
$$
\end{lem}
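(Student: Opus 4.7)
The plan is to treat the two inequalities separately. The upper bound $\E[U] \leq n/2$ is immediate since $U$ takes values in $\{0, 1, \ldots, \lfloor n/2\rfloor\}$. For the lower bound, I would work with the closed-form generating function already written down in the proof of Corollary \ref{coro:weightPolyaFrequence},
$$g_n(z) := \sum_{k=0}^{\lfloor n/2 \rfloor} \binom{n-k}{k} d_n^k z^k = \frac{\phi_+(z)^{n+1} - \phi_-(z)^{n+1}}{\sqrt{1+4d_n z}},\qquad \phi_\pm(z) = \frac{1 \pm \sqrt{1+4d_n z}}{2},$$
so that $\E[U] = g_n'(1)/g_n(1)$, and reduce the problem to an analysis of this ratio.

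My first step would be a monotonicity reduction. Treating $d_n \ge 1$ as a continuous parameter and differentiating $\mathbb{P}(U=k) = \binom{n-k}{k} d_n^k / g_n(1)$ directly, a one-line calculation yields $\partial \E[U]/\partial d_n = \Var(U)/d_n \ge 0$. Hence it suffices to prove the lower bound in the extreme case $d_n = 1$, where $g_n(1)$ becomes a Fibonacci number and $\phi_\pm(1)$ specialise to the golden ratio $\phi = (1+\sqrt 5)/2$ and its conjugate $\psi = (1-\sqrt 5)/2$.

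My second step is to compute $g_n'(1)/g_n(1)$ at $d_n = 1$ by logarithmic differentiation of the closed form, which produces an expression of the form
$$\E[U] = \frac{(n+1)(\phi^n + \psi^n)}{\sqrt 5\,(\phi^{n+1} - \psi^{n+1})} - \frac{2}{5}.$$
Since $\phi\psi = -1$, one has $|\psi|/\phi = 1/\phi^2 < 1$, so the first summand equals $(n+1)/(\sqrt 5\,\phi)$ up to a multiplicative factor $1 + O(\phi^{-2n})$; a routine rationalisation gives $1/(\sqrt 5 \,\phi) = (5-\sqrt 5)/10$. Combining these yields $\E[U] = \frac{5-\sqrt 5}{10}\,n + O(1)$, as required.

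The main obstacle is the bookkeeping in the logarithmic derivative: the $-2d_n/(1+4d_n z)$ contribution coming from the $(1+4d_n z)^{-1/2}$ prefactor must be separated cleanly from the $(n+1)$-term produced by differentiating $\phi_+^{n+1} - \phi_-^{n+1}$, and one has to verify that the $(\psi/\phi)^n$-tail contributes at most a uniformly bounded $O(n\phi^{-2n}) = o(1)$ correction. Beyond that, only elementary algebra is involved.
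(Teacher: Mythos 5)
Your proposal is correct, and its skeleton matches the paper's: the upper bound is trivial, and the lower bound is obtained by reducing to the case $d_n=1$ and then using the $\frac{5-\sqrt5}{10}n+O(1)$ asymptotic for that case. The differences are in how the two steps are carried out. For the reduction, the paper proves the discrete inequality
$\sum_k\binom{n-k}{k}\sum_k k\binom{n-k}{k}d_n^k \ge \sum_k k\binom{n-k}{k}\sum_k\binom{n-k}{k}d_n^k$
by expanding the difference as $\sum_{i<j}\binom{n-i}{i}\binom{n-j}{j}(d_n^j-d_n^i)(j-i)\ge 0$ (a Chebyshev-sum argument), which directly gives $\E_{d_n}[U]\ge\E_1[U]$; you instead differentiate in the continuous parameter $d$ and use the identity $\partial_d\,\E[U]=\Var(U)/d\ge 0$, which is correct (a short computation with $N(d)=\sum_k k a_k d^k$, $D(d)=\sum_k a_k d^k$ gives exactly this) and yields the same comparison for $d_n\ge 1$; the two arguments are essentially discrete and continuous versions of the same correlation inequality. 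For the $d_n=1$ asymptotic, the paper simply cites \cite{KST20}, whereas you make the lemma self-contained by logarithmic differentiation of the closed form $g_n(z)=(\phi_+^{n+1}-\phi_-^{n+1})/\sqrt{1+4z}$ at $z=1$, arriving at
$\E[U]=\frac{(n+1)(\phi^n+\psi^n)}{\sqrt5\,(\phi^{n+1}-\psi^{n+1})}-\frac{2}{5}$,
which I checked is exactly right (e.g.\ it gives $1/2$ at $n=2$ and $2/3$ at $n=3$), and since $|\psi/\phi|=\phi^{-2}<1$ and $1/(\sqrt5\,\phi)=(5-\sqrt5)/10$, the stated $O(1)$ error follows. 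One small bookkeeping point: your sum correctly starts at $k=0$ (the paper's proof of Corollary \ref{coro:weightPolyaFrequence} writes the sum from $k=1$, but its closed form includes the $k=0$ term), so your formula is the one consistent with the distribution of $U$. In short: your route trades the paper's one-line appeal to \cite{KST20} for an explicit golden-ratio computation, and its discrete rearrangement inequality for an equivalent variance identity; both are valid, and yours has the advantage of being fully self-contained.
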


\begin{proof}

    It is obvious that $\E [U] \leq n/2$. For the other inequality, notice that
    \begin{align*}
        &\ \ \ \ \sum_{k=1}^{\lfloor n/2\rfloor} \binom{n - k}{k} \sum_{k=1}^{\lfloor n/2\rfloor} \binom{n - k}{k} d_n ^ k k - \sum_{k=1}^{\lfloor n/2\rfloor} \binom{n - k}{k} k \sum_{k=1}^{\lfloor n/2\rfloor} \binom{n - k}{k} d_n ^ k \\
        & = \sum_{i < j}^{} \binom{n - i}{i} \binom{n - j}{j} (d_n ^ j - d_n ^ i) (j - i) \\
        & \ge 0.
    \end{align*}
    Therefore, we have
    \begin{align*}
        \E [U] = \frac{ \sum_{1\leq k\leq \lfloor n/2 \rfloor}^{} \binom{n - k}{k} d_n ^ k k }{ \sum_{1\leq k\leq \lfloor n/2 \rfloor}^{} \binom{n - k}{k} d_n ^ k } 
        \ge \frac{ \sum_{1\leq k\leq \lfloor n/2 \rfloor}^{} \binom{n - k}{k} k }{ \sum_{1\leq k\leq \lfloor n/2 \rfloor} \binom{n - k}{k} } 
        = \frac{5 - \sqrt{ 5 }}{10} n + O (1).
    \end{align*}The last equality is obtained in \cite{KST20} by the probability generating function.
\end{proof}

\begin{coro} \label{coro:lowtail}
    For sufficiently large $n$, we have
    \begin{align*}
        \mathbb{E} \left[ \min \left\{ \frac{ 1 }{ \sqrt{ U } }, 1 \right\}  \right]
        \lesssim \frac{ 1 }{ \sqrt{ n } }.
    \end{align*}
\end{coro}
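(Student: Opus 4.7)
The plan is to split the expectation according to whether $U$ is comparable to its mean or not. By Lemma \ref{lem:weimean}, there is an absolute constant $c > 0$ (e.g.\ $c = (5 - \sqrt{5})/20$) such that $\mu := \E[U] \ge c n$ for all sufficiently large $n$. On the event $\{U \ge \mu/2\}$ one has $1/\sqrt{U} \le \sqrt{2/\mu} \lesssim 1/\sqrt{n}$, so this event contributes at most $O(1/\sqrt{n})$ to $\E[\min\{1/\sqrt{U}, 1\}]$. On the complementary event $\{U < \mu/2\}$, I would use the crude bound $\min\{1/\sqrt{U}, 1\} \le 1$, so its contribution is at most $\P(U < \mu/2)$.

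To control this probability, I invoke the P\'{o}lya frequency structure established in Corollary \ref{coro:weightPolyaFrequence} and apply Proposition \ref{prop:tail} with $r = \mu/2$. Since the distribution of $U$ is generated by a PF sequence with mean $\mu$, this yields
\begin{align*}
\P(U \le \mu/2) \le \exp\left(-\frac{\mu}{8}\right) \le \exp\left(-\frac{cn}{8}\right),
\end{align*}
which is exponentially small in $n$ and hence $o(1/\sqrt{n})$. Combining the two contributions gives
\begin{align*}
\E\left[\min\left\{\frac{1}{\sqrt{U}},\, 1\right\}\right]
\le \sqrt{\frac{2}{\mu}} + \P(U < \mu/2)
\lesssim \frac{1}{\sqrt{n}} + e^{-cn/8}
\lesssim \frac{1}{\sqrt{n}},
\end{align*}
which is the desired estimate.

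I do not anticipate any significant obstacle: the argument is essentially just a one-sided concentration estimate. The mean of $U$ is linear in $n$ by Lemma \ref{lem:weimean}, the lower tail decays exponentially by the PF property established in Corollary \ref{coro:weightPolyaFrequence}, and away from the lower tail the integrand is already of order $1/\sqrt{n}$ deterministically. The only mild technical point is to ensure that the hypothesis of Proposition \ref{prop:tail} is satisfied for the full sequence $\{\P(U = k)\}$, which is exactly what Corollary \ref{coro:weightPolyaFrequence} provides.
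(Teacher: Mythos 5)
Your proof is correct and follows essentially the same route as the paper: split the expectation at a lower-tail threshold (you use $\mu/2$, the paper uses $n/6$), bound the tail probability by combining the P\'{o}lya frequency tail bound of Proposition \ref{prop:tail} with the linear lower bound on $\E[U]$ from Lemma \ref{lem:weimean}, and use the deterministic bound $1/\sqrt{U}\lesssim 1/\sqrt{n}$ on the complementary event. The only cosmetic difference is the choice of threshold and your explicit remark that Corollary \ref{coro:weightPolyaFrequence} supplies the PF hypothesis, which the paper leaves implicit.
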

    
\begin{proof}
    
        Combining Proposition \ref{prop:tail} and Lemma \ref{lem:weimean}, we have for sufficiently large $n$ that
        \begin{align*}
            \P \left( U \leq \frac{n}{6} \right) 
            \leq \exp \left( - \frac{\left( \E [U] - \frac{n}{6} \right) ^2}{2 \E [U]} \right) 
            \leq \exp \left( - \frac{ \left( \frac{ 5 - \sqrt{ 5 } }{ 10 } n - \frac{ n }{ 6 } + O (1) \right) ^ 2 }{ n } \right) 
            \leq e^{- 0.001 n}.
        \end{align*}
        Hence, we have for $n$ large enough that
        \begin{align*}
            \mathbb{E} \left[ \min \left\{ \frac{ 1 }{ \sqrt{ U } }, 1 \right\}  \right] 
            & = \mathbb{E} \left[ \min \left\{ \frac{ 1 }{ \sqrt{ U } }, 1 \right\} \ \middle| \ U \leq \frac{ n }{ 6 } \right]
            \mathbb{P} \left( U \leq \frac{ n }{ 6 } \right) \\
            &\ \ \  + \mathbb{E} \left[ \frac{ 1 }{ \sqrt{ U } } \ \middle| \ \frac{ n }{ 6 } < U \leq \left\lfloor \frac{ n }{ 2 } \right\rfloor  \right]
            \mathbb{P} \left( \frac{ n }{ 6 } < U \leq \left\lfloor \frac{ n }{ 2 } \right\rfloor \right) \\
            & \leq \mathbb{P} \left( U \leq \frac{ n }{ 6 } \right) + \sqrt{ \frac{ 6 }{ n } } \\
            & \lesssim \frac{ 1 }{ \sqrt{ n } }.
        \end{align*} 
\end{proof}
    


\subsection{Distances between normal mixtures}

The following result bounds the Kolmogorov and Wasserstein $W_1$ distances of two normal distributions with the same mean, respectively. The proof given by \cite{NP12} (for the case $\sigma_1, \sigma_2 > 0$) is based on Stein's method. The result is trivial for $\sigma_1 \sigma_2 = 0$.

\begin{lem}[\cite{NP12}, Proposition 3.6.1] \label{lem:normaldistancekolwass}
    Let $\mathcal{N}\left( \mu, \sigma_1^2 \right)$ and $ \mathcal{N}\left( \mu, \sigma_2^2 \right) $ be two normal distributions with variances $\sigma_1^2$ and $\sigma_2^2$ such that $\sigma_1^2 + \sigma_2^2 > 0$. Then we have
    \begin{align*}
        d_K \left( \mathcal{N}\left( \mu, \sigma_1^2 \right), \mathcal{N}\left( \mu, \sigma_2^2 \right) \right)  
        & \leq 
        \frac{\left| \sigma_1^2 - \sigma_2^2 \right|}{\max\{\sigma_1^2, \sigma_2^2\}}, \\
        d_W \left( \mathcal{N}\left( \mu, \sigma_1^2 \right), \mathcal{N}\left( \mu, \sigma_2^2 \right) \right) 
        & \leq 
        \sqrt{ \frac{ 2 }{ \pi} } \frac{\left| \sigma_1^2 - \sigma_2^2 \right|}{\max\{\sigma_1, \sigma_2\}}.
    \end{align*}
\end{lem}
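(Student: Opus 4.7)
The plan is to reduce to the mean-zero case (both metrics are translation-invariant) and, by symmetry of the bounds, assume $\sigma_1\geq\sigma_2$. The edge case $\sigma_2=0$ can be checked directly: the Kolmogorov distance between $\mathcal{N}(0,\sigma_1^2)$ and $\delta_0$ equals $1/2<1$, and the Wasserstein distance equals $\sigma_1\,\mathbb{E}|\mathcal{N}(0,1)|=\sigma_1\sqrt{2/\pi}$, which matches the right-hand side exactly. So the substantive case is $\sigma_1>\sigma_2>0$.

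The core step is a one-shot application of Stein's method, where the key point is to center the method at the \emph{larger} variance $\sigma_1^2$. For a test function $h$ (an indicator of a half-line for $d_K$, $1$-Lipschitz for $d_W$), let $f$ solve the Stein equation for $\mathcal{N}(0,\sigma_1^2)$,
\[
\sigma_1^2 f'(x)-xf(x)=h(x)-\mathbb{E}[h(Z_1)],\qquad Z_1\sim\mathcal{N}(0,\sigma_1^2).
\]
Evaluating both sides at $Z_2\sim\mathcal{N}(0,\sigma_2^2)$, taking expectations, and applying Gaussian integration by parts $\mathbb{E}[Z_2 f(Z_2)]=\sigma_2^2\,\mathbb{E}[f'(Z_2)]$ collapses the expression to the clean identity
\[
\mathbb{E}[h(Z_2)]-\mathbb{E}[h(Z_1)]=(\sigma_1^2-\sigma_2^2)\,\mathbb{E}[f'(Z_2)].
\]
Consequently, in either metric it suffices to control $\|f'\|_\infty$ uniformly over the corresponding test class.

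To bound $\|f'\|_\infty$, I would invoke the classical estimates for the \emph{standard} normal Stein solution and transfer them by rescaling. Writing $f(x)=\tilde f(x/\sigma_1)/\sigma_1$, one checks that $\tilde f$ is the standard-normal Stein solution for the rescaled test function $\tilde h(y)=h(\sigma_1 y)$, so $\|f'\|_\infty=\|\tilde f'\|_\infty/\sigma_1^2$. For $h=\mathbf{1}_{\cdot\leq z}$ the rescaled test is still an indicator, and the classical bound $\|\tilde f'\|_\infty\leq 1$ yields $\|f'\|_\infty\leq 1/\sigma_1^2$, which combined with the identity above gives the Kolmogorov inequality. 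For $1$-Lipschitz $h$, $\tilde h$ is $\sigma_1$-Lipschitz, and the classical bound $\|\tilde f'\|_\infty\leq\sqrt{2/\pi}\cdot\|\tilde h'\|_\infty$ yields $\|f'\|_\infty\leq\sqrt{2/\pi}/\sigma_1$, which gives the Wasserstein inequality. The different denominators $\max\{\sigma_1^2,\sigma_2^2\}$ versus $\max\{\sigma_1,\sigma_2\}$ thus track the different scalings of the two Stein constants under the change of variables that normalizes the target variance.

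There is no serious conceptual obstacle; the proof is really a one-line comparison via Stein's method. The only care required is (i) choosing the Stein target to be the distribution with the larger variance, so that $\|f'\|_\infty$ acquires the $\sigma_1^{-2}$ (resp.\ $\sigma_1^{-1}$) factor matching the $\max$ in the denominator, and (ii) bookkeeping how the Lipschitz constant of the test function rescales when passing to the standard-normal Stein equation, which is what distinguishes the Kolmogorov and Wasserstein bounds.
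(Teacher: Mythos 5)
Your proof is correct, and it follows essentially the same route as the source the paper relies on: the paper does not prove this lemma itself but cites \cite{NP12} (Proposition 3.6.1), whose argument is exactly this Stein's-method comparison — plug the second Gaussian into the Stein equation of the one with larger variance, use Gaussian integration by parts to get $\mathbb{E}[h(Z_2)]-\mathbb{E}[h(Z_1)]=(\sigma_1^2-\sigma_2^2)\,\mathbb{E}[f'(Z_2)]$, and invoke the classical bounds $\|f'\|_\infty\leq 1/\sigma_1^2$ (indicators) and $\|f'\|_\infty\leq\sqrt{2/\pi}/\sigma_1$ (Lipschitz tests) — together with the trivial treatment of the degenerate case $\sigma_1\sigma_2=0$, which you also handle correctly.
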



We obtain the following lemma.

\begin{lem} \label{lem:mixdis}
    Let $\sigma, \sigma_* > 0$. Suppose that $X$ is a Gaussian mixture of $\mathcal{N}(\mu, \sigma^2)$, where $\mu$ is a random variable with mean $\mu_0$ and variance $\sigma_*^2$. We write $\widehat{X} := (X-\mu_0)/\sqrt{\sigma^2+\sigma_*^2}$ and $\widehat{\mu} := (\mu-\mu_0)/\sigma_*$. Then we have
    \begin{align*}
        d_{K} \left( \mathcal{L} ( \widehat{X} ) , \mathcal{N}(0, 1) \right) 
        & \leq \frac{ 1 }{ \sqrt{ 2 \pi } } \frac{ \sigma_* }{ \sigma } d_W \left( \mathcal{L} \left( \widehat{\mu} \right), \mathcal{N}\left( 0, 1 \right)  \right). 
        \\
        d_W \left( \mathcal{L} ( \widehat{X} ) , \mathcal{N}(0, 1) \right) 
        & \leq \frac{\sigma_*}{\sqrt{ \sigma^2 + \sigma_*^2 }} d_W \left( \mathcal{L} \left( \widehat{\mu} \right) , \mathcal{N}\left( 0, 1 \right) \right). 
    \end{align*}
\end{lem}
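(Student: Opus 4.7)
The plan is to exploit the stability of the Gaussian under convolution. Set $\tau := \sqrt{\sigma^2 + \sigma_*^2}$ and realize the mixture as $\widehat{X} = (\sigma_* \widehat{\mu} + \sigma Z)/\tau$, where $Z \sim \mathcal{N}(0,1)$ is independent of $\mu$; this follows from $X = \mu + \sigma Z$ together with $\mu - \mu_0 = \sigma_* \widehat{\mu}$. If $Z'$ is an auxiliary standard Gaussian independent of $Z$, then $N := (\sigma_* Z' + \sigma Z)/\tau \sim \mathcal{N}(0,1)$. Both bounds will be obtained by comparing $\widehat{X}$ to such an $N$, with the coupling of $\widehat{\mu}$ and $Z'$ chosen to suit the metric at hand.

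For the Wasserstein estimate, I will take an optimal $W_1$-coupling $(\widehat{\mu}, Z')$, so that $\E|\widehat{\mu} - Z'| = d_W(\mathcal{L}(\widehat{\mu}), \mathcal{N}(0,1))$, and pick $Z$ independent of this pair. Then $\widehat{X} - N = (\sigma_*/\tau)(\widehat{\mu} - Z')$, so that $\E|\widehat{X} - N| = (\sigma_*/\tau)\, d_W(\mathcal{L}(\widehat{\mu}), \mathcal{N}(0,1))$. Since $d_W$ is the infimum over couplings and $N \sim \mathcal{N}(0,1)$, this yields exactly the stated bound.

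For the Kolmogorov estimate, I will condition on $\mu$ to write
\[
F_{\widehat{X}}(x) = \E\!\left[\Phi\!\left(\tfrac{\tau x - \sigma_* \widehat{\mu}}{\sigma}\right)\right],
\]
and use the Gaussian convolution identity recalled in the outline (cf.\ Lemma~16 of \cite{KST20}) to express
\[
\Phi(x) = \E\!\left[\Phi\!\left(\tfrac{\tau x - \sigma_* Z'}{\sigma}\right)\right].
\]
Subtracting these gives $F_{\widehat{X}}(x) - \Phi(x) = \E[h_x(\widehat{\mu})] - \E[h_x(Z')]$, where $h_x(u) := \Phi((\tau x - \sigma_* u)/\sigma)$. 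A direct differentiation yields $|h_x'(u)| = (\sigma_*/\sigma)\,\varphi((\tau x - \sigma_* u)/\sigma) \leq \sigma_*/(\sigma\sqrt{2\pi})$ uniformly in $x$ and $u$, so $h_x$ is Lipschitz with that constant. The Kantorovich--Rubinstein dual formulation of $W_1$ then gives $|F_{\widehat{X}}(x) - \Phi(x)| \leq (\sigma_*/(\sigma\sqrt{2\pi}))\, d_W(\mathcal{L}(\widehat{\mu}), \mathcal{N}(0,1))$, and taking the supremum over $x$ completes the proof.

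Both arguments are quite short once the decomposition $\widehat{X} = (\sigma_*\widehat{\mu} + \sigma Z)/\tau$ is set up. The only mildly subtle point is recognizing that the Wasserstein inequality benefits from a direct coupling on the sample-space level, whereas the Kolmogorov inequality benefits from the dual Lipschitz estimate, which captures the smoothing effect of the conditional Gaussian through the uniform bound $\varphi \leq 1/\sqrt{2\pi}$ on its density; this smoothing is precisely the reason the Kolmogorov bound acquires the factor $1/\sqrt{2\pi}$ and uses $\sigma$ instead of $\tau$ in the denominator.
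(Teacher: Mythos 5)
Your proof is correct, and it rests on the same decomposition the paper uses, namely writing $\widehat{X}=(\sigma_*\widehat{\mu}+\sigma Z)/\sqrt{\sigma^2+\sigma_*^2}$ with $Z$ an independent standard Gaussian and invoking the compound-normal identity; the two arguments differ only in packaging. For the Kolmogorov bound, your Lipschitz estimate $|h_x'(u)|\leq \sigma_*/(\sigma\sqrt{2\pi})$ combined with the Kantorovich--Rubinstein dual of $W_1$ is, after a single integration by parts, exactly the paper's computation, which bounds $\varphi\leq 1/\sqrt{2\pi}$ and changes variables in $\int_{\R}\big|\P(\widehat{\mu}\leq u)-\Phi(u)\big|\,du$; the same smoothing mechanism produces the factor $\frac{1}{\sqrt{2\pi}}\frac{\sigma_*}{\sigma}$ in both. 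For the Wasserstein bound, the paper manipulates the CDF difference directly via Fubini and a change of variables, whereas you pass to an optimal coupling of $\widehat{\mu}$ with a standard Gaussian and use $d_W\big(\mathcal{L}(\widehat{X}),\mathcal{N}(0,1)\big)\leq \E\,|\widehat{X}-N|$; this is legitimate because the paper's own definition $d_W=\int_0^1|F^{-1}(t)-G^{-1}(t)|\,dt$ is precisely the quantile-coupling cost (and $\widehat{\mu}$ has finite first moment since $\mu$ has finite variance), so the optimal coupling exists and the coupling upper bound is the standard equivalence of the two representations of $W_1$ on $\R$. Your route is slightly more structural and avoids the double-integral bookkeeping; the paper's is more self-contained in that it needs only Fubini and the CDF-integral definition, with no appeal to duality or couplings.
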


\begin{proof}
One can check that $\E [X] = \mu_0$ and $\Var (X) = \sigma^2 + \sigma_*^2$. Hence, $\widehat{X}$ is the normalized version of $X$. We write $F(y)=\P\big(\widehat{X}\leq y\big)$ for $y \in \R$. Recall that $\varphi$ and $\Phi$ are the probability density and cumulative distribution functions of the standard normal distribution, respectively. One can use Fubini's theorem to show that
    \begin{align*}
        F (y)  &= \int_{\R}^{} \varphi (u) \P \left( \frac{\mu - \mu_0}{\sigma_*} \leq \frac{\sqrt{ \sigma^2 + \sigma_*^2 } y - \sigma u}{\sigma_*} \right) d u,\\
        \Phi (y) &= \int_{\R}^{} \varphi (u) \Phi \left( \frac{\sqrt{ \sigma^2 + \sigma_*^2 } y - \sigma u}{\sigma_*} \right) d u.
    \end{align*}
    For the Kolmogorov distance, we have
    \begin{align*}
        \left| F(y) - \Phi \left( y \right) \right| 
        &\leq \int_{\R}^{} \varphi (u) \left| \P \left( \frac{\mu - \mu_0}{\sigma_*} \leq \frac{\sqrt{ \sigma^2 + \sigma_*^2 } y - \sigma u}{\sigma_*} \right) - \Phi \left( \frac{\sqrt{ \sigma^2 + \sigma_*^2 } y - \sigma u}{\sigma_*} \right) \right|   d u \\
        & \leq \frac{ 1 }{ \sqrt{ 2 \pi } } 
        \int_{\R}^{} \left| \P \left( \frac{\mu - \mu_0}{\sigma_*} \leq \frac{\sqrt{ \sigma^2 + \sigma_*^2 } y - \sigma u}{\sigma_*} \right) - \Phi \left( \frac{\sqrt{ \sigma^2 + \sigma_*^2 } y - \sigma u}{\sigma_*} \right) \right|   d u \\
        & = \frac{ 1 }{ \sqrt{ 2 \pi } } \frac{ \sigma_* }{ \sigma }
        \int_{\R}^{} \left| \P \left( \frac{\mu - \mu_0}{\sigma_*} \leq u \right) - \Phi \left( u \right) \right|   d u \\
        & = \frac{ 1 }{ \sqrt{ 2 \pi } } \frac{ \sigma_* }{ \sigma } d_W \left( \mathcal{L} \left( \widehat{\mu} \right), \mathcal{N}\left( 0, 1 \right)  \right).
    \end{align*}
    For Wasserstein $W_1$ distance, we have
    \begin{align*}
        &\ \ \ \ \int_{\R}^{} \left| F(y) - \Phi (y) \right| dy \\
        & = \int_{\R}^{} \left| \int_{\R}^{} \varphi (u) \left( \P \left( \frac{\mu - \mu_0}{\sigma_*} \leq \frac{\sqrt{ \sigma^2 + \sigma_*^2 } y - \sigma u}{\sigma_*} \right) - \Phi \left( \frac{\sqrt{ \sigma^2 + \sigma_*^2 } y - \sigma u}{\sigma_*} \right) \right) du \right| dy \\
        & \leq \int_{\R}^{} dy \int_{\R}^{} \varphi (u) \left| \P \left( \frac{\mu - \mu_0}{\sigma_*} \leq \frac{\sqrt{ \sigma^2 + \sigma_*^2 } y - \sigma u}{\sigma_*} \right) - \Phi \left( \frac{\sqrt{ \sigma^2 + \sigma_*^2 } y - \sigma u}{\sigma_*} \right) \right|  du \\
        & = \int_{\R}^{} \varphi (u) du \int_{\R}^{} \left| \P \left( \frac{\mu - \mu_0}{\sigma_*} \leq \frac{\sqrt{ \sigma^2 + \sigma_*^2 } y - \sigma u}{\sigma_*} \right) - \Phi \left( \frac{\sqrt{ \sigma^2 + \sigma_*^2 } y - \sigma u}{\sigma_*} \right) \right| dy \\
        & =  \frac{\sigma_*}{\sqrt{ \sigma^2 + \sigma_*^2 }} \int_{\R}^{} \varphi (u) du \int_{\R}^{} \left| \P \left( \frac{\mu - \mu_0}{\sigma_*} \leq y \right) - \Phi \left( y \right) \right| dy \\
        & \leq \frac{\sigma_*}{\sqrt{ \sigma^2 + \sigma_*^2 }} d_W \left( \mathcal{L} \left( \widehat{\mu} \right) , \mathcal{N}\left( 0, 1 \right)  \right) \int_{\R}^{} \varphi (u) du \\
        & = \frac{\sigma_*}{\sqrt{ \sigma^2 + \sigma_*^2 }} d_W \left( \mathcal{L} \left( \widehat{\mu} \right), \mathcal{N}\left( 0, 1 \right)  \right).
    \end{align*}
\end{proof}

\subsection{Concentration inequalities of mixture distributions}

For a probability distribution $\mu$, the logarithmic moment generating function $\alpha_{\mu}(\lambda)$ and the tail probability $\beta_{\mu} (r)$ are defined respectively as
\begin{align*}
    \alpha_{\mu} (\lambda) &= \log_{} \mathbb{E}_{\mu} \big[ e ^ {\lambda (X - \mathbb{E}_{\mu} \left[ X \right])} \big],~~\lambda\in\R\\
    \beta_{\mu} (r) &= \mathbb{P}_{\mu} \left( \left| X - \mathbb{E}_{\mu} \left[ X \right] \right| \ge r \right),~~r>0.
\end{align*}

\begin{prop}[\cite{BLM13}, Theorem 2.1] \label{prop:subgaudef}
The following results hold.

\begin{enumerate}
    \item[(1)] Suppose that there exists a constant $C>0$ such that $\alpha_{\mu} (\lambda) \leq C \lambda^2$/4 for all $\lambda \in \R$, then we have $\beta_{\mu} (r) \leq 2 e^{-r^2/C}$ for all $ r > 0$.
    \item[(2)] Suppose that there exists a constant $C>0$ such that $\beta_{\mu} (r) \leq 2 e^{-r^2/C}$ for all $r > 0$, then we have $\alpha_{\mu} (\lambda) \leq 4 C \lambda^2$ for all $\lambda \in \R$.
\end{enumerate}
\end{prop}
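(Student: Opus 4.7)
The plan is to prove the two implications separately using standard Chernoff-style arguments, since this is a classical equivalence between subgaussian moment generating function bounds and subgaussian tails.

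For part (1), I would use the Chernoff method. Set $Y := X - \E_{\mu}[X]$. For any $\lambda > 0$, Markov's inequality gives
\[
\mathbb{P}_{\mu}(Y \geq r) \;=\; \mathbb{P}_{\mu}\!\left(e^{\lambda Y} \geq e^{\lambda r}\right) \;\leq\; e^{-\lambda r}\,\E_{\mu}[e^{\lambda Y}] \;\leq\; \exp\!\left(-\lambda r + \tfrac{C\lambda^2}{4}\right).
\]
Optimizing the right-hand side in $\lambda$ by setting $\lambda = 2r/C$ produces the bound $e^{-r^2/C}$. Applying the same argument to $-Y$ with $\lambda > 0$ gives the symmetric lower-tail bound, and summing the two yields $\beta_{\mu}(r) \leq 2 e^{-r^2/C}$.

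For part (2), I would convert the tail bound into moment bounds and then plug these into the Taylor expansion of the moment generating function. Using the layer-cake identity,
\[
\E_{\mu}[|Y|^k] \;=\; k\int_0^{\infty} r^{k-1}\,\mathbb{P}_{\mu}(|Y| \geq r)\,dr \;\leq\; 2k\int_0^{\infty} r^{k-1} e^{-r^2/C}\,dr \;=\; k\, C^{k/2}\,\Gamma(k/2),
\]
after the substitution $s = r^2/C$. Since $\E_{\mu}[Y] = 0$, the MGF expands as
\[
\E_{\mu}[e^{\lambda Y}] \;=\; 1 + \sum_{k \geq 2} \frac{\lambda^k\,\E_{\mu}[Y^k]}{k!},
\]
and substituting the moment bound above gives a power series majorant in $|\lambda|\sqrt{C}$ that can be summed in closed form (splitting the even and odd terms and using $k\,\Gamma(k/2) = 2\,\Gamma(k/2+1)$ together with elementary factorial inequalities such as $\Gamma(k/2+1)/k! \leq 1/\lfloor k/2 \rfloor!$) to produce the target bound $\alpha_{\mu}(\lambda) \leq 4C\lambda^2$. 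Taking logarithms and using $\log(1+x) \leq x$ finishes the argument.

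The main obstacle is purely computational and lies in part (2): one must carefully handle the odd moments and reconcile the $\Gamma$-function factors with the factorials in Taylor's expansion in order to recover the constant $4C$ claimed in the statement. A cleaner alternative I would consider if the bookkeeping became awkward is a symmetrization trick: introduce an independent copy $Y'$, note that $Y - Y'$ is symmetric with tail $\mathbb{P}(|Y-Y'| \geq r) \leq 4 e^{-r^2/(4C)}$ by a union bound, and use Jensen's inequality $\E[e^{\lambda Y}] \leq \E[e^{\lambda(Y-Y')}] = \E[\cosh(\lambda(Y-Y'))]$ (valid since $\E[Y']=0$) to bound the MGF through the even-moment series only, which automatically avoids the odd-moment issue at the cost of a slightly worse constant that is still compatible with the factor $4$ in the statement.
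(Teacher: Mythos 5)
The paper itself does not prove this proposition; it is quoted directly from \cite{BLM13} (Theorem 2.1), so there is no internal proof to compare against. Your primary argument is the standard textbook proof and it is sound. Part (1) is exactly the Chernoff bound applied to $\pm\lambda$ with the optimal choice $\lambda=2r/C$, and is complete as written. For part (2), the layer-cake estimate $\E|Y|^k\le kC^{k/2}\Gamma(k/2)=2C^{k/2}\Gamma(k/2+1)$ is correct, and inserting it into the Taylor series (which starts at $k=2$ precisely because $\E Y=0$) with the crude bound $\Gamma(k/2+1)/k!\le 1/\lfloor k/2\rfloor!$ yields $\E e^{\lambda Y}\le 1+2(1+u)\bigl(e^{u^2}-1\bigr)$ with $u=|\lambda|\sqrt{C}$; one then verifies $\log\bigl(1+2(1+u)(e^{u^2}-1)\bigr)\le 4u^2$ for all $u\ge 0$ (the left side behaves like $2u^2$ near $0$ and like $u^2+\log(2u+2)$ at infinity), so the constant $4C$ indeed comes out, with a lot of slack. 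Do also justify the interchange of expectation and summation, e.g.\ via $\E e^{|\lambda||Y|}<\infty$, which the Gaussian tail guarantees.

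The one point I would push back on is the fallback symmetrization route: with the union-bound tail $\P(|Y-Y'|\ge r)\le 4e^{-r^2/(4C)}$, the resulting even-moment bound $\E\bigl[(Y-Y')^{2m}\bigr]\le 4(4C)^m m!$ gives $\E\bigl[\cosh\bigl(\lambda(Y-Y')\bigr)\bigr]\le 1+4\bigl(e^{2C\lambda^2}-1\bigr)$, whose logarithm is approximately $8C\lambda^2$ as $\lambda\to 0$; so that alternative, as described, does \emph{not} stay within the stated $4C\lambda^2$ and would not prove part (2). If you want the symmetrized variant, bound the moments of $Y-Y'$ directly, e.g.\ $\E\bigl[(Y-Y')^{2m}\bigr]\le 4^m\E[Y^{2m}]\le 2(4C)^m m!$, rather than passing through the tail of $Y-Y'$; then the small-$\lambda$ coefficient is exactly $4C$ and the argument goes through. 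In short: keep the direct moment computation as your main proof and either drop the symmetrization remark or fix its constant bookkeeping.
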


We obtain the following concentration inequality of mixture distributions.
\begin{lem} \label{lem:mixcon}
Let $\mu$ be the mixture of $\{\mu_{\theta}\}_{\theta \in \Theta}$ with the mixing distribution $\nu$, i.e., $\theta\sim \nu$. Then we have
\begin{align*}
    \alpha_{\mu} (\lambda) 
    \leq \sup_{\theta \in \Theta} \alpha_{\mu_{\theta}} (\lambda) 
    + \log_{} \int_{\Theta}^{} e ^ {\lambda \left( \mathbb{E}_{\mu_{\theta}} \left[ X \right] - \mathbb{E}_{\mu} \left[ X \right] \right) } \nu (d \theta).
\end{align*}
\end{lem}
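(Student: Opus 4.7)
The plan is to unfold the definition of $\alpha_\mu(\lambda)$ directly, conditioning on the mixing variable $\theta$ and then factoring out a term that is centered with respect to $\mu_\theta$. Since $\mu$ is the mixture of $\{\mu_\theta\}_{\theta \in \Theta}$ with mixing law $\nu$, for any measurable test function $h$ we have $\E_\mu[h(X)] = \int_\Theta \E_{\mu_\theta}[h(X)]\, \nu(d\theta)$, so applying this to $h(x) = e^{\lambda(x - \E_\mu[X])}$ yields
\begin{align*}
\E_\mu\bigl[e^{\lambda(X - \E_\mu[X])}\bigr] = \int_\Theta \E_{\mu_\theta}\bigl[e^{\lambda(X - \E_\mu[X])}\bigr] \, \nu(d\theta).
\end{align*}

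The key trick is to recenter the inner expectation around $\E_{\mu_\theta}[X]$ instead of $\E_\mu[X]$. Writing $X - \E_\mu[X] = (X - \E_{\mu_\theta}[X]) + (\E_{\mu_\theta}[X] - \E_\mu[X])$, the second summand is a constant under $\mu_\theta$, so it pulls out of the expectation, giving
\begin{align*}
\E_{\mu_\theta}\bigl[e^{\lambda(X - \E_\mu[X])}\bigr]
= e^{\lambda(\E_{\mu_\theta}[X] - \E_\mu[X])} \cdot e^{\alpha_{\mu_\theta}(\lambda)}.
\end{align*}

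Substituting back and then bounding $e^{\alpha_{\mu_\theta}(\lambda)}$ by $e^{\sup_{\theta' \in \Theta} \alpha_{\mu_{\theta'}}(\lambda)}$ pointwise in $\theta$, we get
\begin{align*}
\E_\mu\bigl[e^{\lambda(X - \E_\mu[X])}\bigr] \leq e^{\sup_{\theta \in \Theta} \alpha_{\mu_\theta}(\lambda)} \int_\Theta e^{\lambda(\E_{\mu_\theta}[X] - \E_\mu[X])} \, \nu(d\theta).
\end{align*}
Taking logarithms on both sides produces exactly the claimed inequality. There is no real obstacle here; the result is essentially a bookkeeping identity once one notices that the only $\theta$-dependent part of the integrand splits cleanly into a centered factor (which gives $\alpha_{\mu_\theta}$) and a mean-shift factor (which gives the second term). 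The supremum bound is only used to decouple the two factors so that the mean-shift contribution can be written as a simple integral over $\nu$.
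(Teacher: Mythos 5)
Your proof is correct and follows essentially the same route as the paper: condition on the mixing parameter, recenter the exponent around $\E_{\mu_\theta}[X]$ so the mean-shift factor pulls out, bound $e^{\alpha_{\mu_\theta}(\lambda)}$ by its supremum over $\Theta$, and take logarithms. No gaps.
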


\begin{proof} 

We have
\begin{align*}
    \alpha_{\mu} (\lambda) 
    = \log_{} \mathbb{E}_{\mu} \left[ e ^ {\lambda \left( X - \mathbb{E}_{\mu} \left[ X \right] \right) } \right]
    & = \log_{} \mathbb{E}_{\theta} \left[ \mathbb{E}_{\mu_{\theta}} \left[ e ^ {\lambda \left( X - \mathbb{E}_{\mu_{\theta}} \left[ X \right] \right) } \cdot e ^ {\lambda \left( \mathbb{E}_{\mu_{\theta}} \left[ X \right] - \mathbb{E}_{\mu} \left[ X \right] \right) } \right] \right] \\
    & = \log_{} \mathbb{E}_{\theta} 
    \left[ e ^ {\alpha_{\mu_{\theta}} (\lambda)} \cdot e ^ {\lambda \left( \mathbb{E}_{\mu_{\theta}} \left[ X \right] - \mathbb{E}_{\mu} \left[ X \right] \right) } \right] \\
    & \leq \sup_{\theta \in \Theta} \alpha_{\mu_{\theta}} (\lambda) + \log_{} \mathbb{E}_{\theta} 
    \left[ e ^ {\lambda \left( \mathbb{E}_{\mu_{\theta}} \left[ X \right] - \mathbb{E}_{\mu} \left[ X \right] \right) } \right].
\end{align*}

\end{proof}

We say that a function $f: X ^ n \to \R$ has the \emph{bounded differences property} if there are constants $c_1, \cdots, c_n$ such that for $1 \leq i \leq n$,
\begin{align*}
    \sup_{x_1, \cdots, x_n, x_i' \in X} \left| f (x_1, \cdots, x_{i-1}, x_i, x_{i+1}, \cdots, x_n) - f (x_1, \cdots, x_{i - 1}, x_i', x_{i + 1}, \cdots, x_n) \right| \leq c_i.
\end{align*}

The following concentration inequality of McDiarmid \cite{M89} will be used in our proof of the main theorems.

\begin{prop}[\cite{M89}] \label{prop:boundiff}
Suppose the function $f$ satisfies the bounded differences assumption with constants $c_1, \cdots, c_n$. 
Let $\{X_i\}_{i=1}^n$ be independent random variables. Then we have
\begin{align*}
    \mathbb{P} \left( \left| f(X_1, \cdots, X_n)-\E[f(X_1, \cdots, X_n)]\right| \ge t \right) \leq 2 \exp \left( - \frac{2t^2}{\sum_{i = 1}^{n} c_i^2} \right) .
\end{align*}
\end{prop}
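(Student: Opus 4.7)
The plan is to apply the martingale method of bounded differences. Consider the Doob martingale $Z_i = \E[f(X_1, \ldots, X_n) \mid X_1, \ldots, X_i]$ for $i = 0, 1, \ldots, n$, with $Z_0 = \E[f(X_1, \ldots, X_n)]$ and $Z_n = f(X_1, \ldots, X_n)$. Writing the deviation $f(X_1, \ldots, X_n) - \E[f(X_1, \ldots, X_n)]$ as the telescoping sum $\sum_{i=1}^n D_i$ of martingale differences $D_i := Z_i - Z_{i-1}$, I reduce the problem to concentration of a sum of bounded martingale differences.

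The key step is to show that, conditional on the $\sigma$-algebra $\mathcal{F}_{i-1} := \sigma(X_1, \ldots, X_{i-1})$, the random variable $D_i$ lies almost surely in an interval of length at most $c_i$. Because $X_i$ is independent of $\mathcal{F}_{i-1}$, one can write $Z_i = g_i(X_1, \ldots, X_i)$, where $g_i$ is obtained from $f$ by integrating out $X_{i+1}, \ldots, X_n$ against their joint law; the bounded differences property of $f$ transfers to $g_i$ in its last argument, so $g_i$ has variation at most $c_i$ in $X_i$. Subtracting the conditional mean $Z_{i-1} = \E[g_i(X_1, \ldots, X_i) \mid \mathcal{F}_{i-1}]$ yields the desired range bound on $D_i$.

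Given this conditional range bound, Hoeffding's lemma applied conditionally on $\mathcal{F}_{i-1}$ gives $\E[e^{\lambda D_i} \mid \mathcal{F}_{i-1}] \leq \exp(\lambda^2 c_i^2 / 8)$ for every $\lambda \in \R$. Iterating via the tower property produces the unconditional MGF bound $\E[e^{\lambda(Z_n - Z_0)}] \leq \exp(\lambda^2 \sum_{i=1}^n c_i^2 / 8)$. A Chernoff-type estimate together with the optimal choice $\lambda = 4t / \sum_i c_i^2$ then yields the one-sided tail $\P(Z_n - Z_0 \geq t) \leq \exp(-2t^2 / \sum_i c_i^2)$; applying the same argument to $-f$ handles the lower tail, and a union bound supplies the factor $2$ in the stated inequality. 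The only genuinely nontrivial step is the verification of the conditional range bound on $D_i$; once that is in hand, the remainder is the standard Azuma--Hoeffding machinery. Since the result is classical, the paper would likely just cite it rather than reproduce the proof.
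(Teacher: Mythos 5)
Your argument is correct and is exactly the standard Doob-martingale/Azuma--Hoeffding proof of McDiarmid's inequality, including the right optimization constant; the paper itself gives no proof of Proposition \ref{prop:boundiff}, simply citing \cite{M89}, whose proof is the one you reproduce. Nothing further is needed.
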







The following concentration inequality on the slices $\binom{[n]}{k}$ comes from Bobkov \cite{B04}.

\begin{prop}[\cite{B04}, Theorem 2.1] \label{prop:ConcentrationSlice}
Let $L$ be a positive constant and let $g: \binom{[n]}{k} \to \R$ be a function such that $\left| g (J) - g (J') \right| \leq L$ for any $J, J' \in \binom{[n]}{k}$ with $\left| J \Delta J' \right| = 2$. Then we have for all $t \ge 0$ that
\begin{align*}
    \mathbb{P} \left( \left| g (I) - \E [g (I)] \right| \ge t \right)
    \leq 2 \exp \left( - \frac{ t ^ 2 }{ \min \left\{ k, n - k \right\} L ^ 2 } \right),
\end{align*}
where $I$ is a uniform random element of $\binom{[n]}{k}$. As a consequence, we have
\begin{align*}
    \text{Var} \left( g (I) \right) \leq 2 \min \left\{ k, n - k \right\} L ^ 2.
\end{align*}
\end{prop}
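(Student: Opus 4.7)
The plan is to deduce Proposition~\ref{prop:ConcentrationSlice} by running a Doob martingale adapted to a sequential exposure of the random slice, and then invoking the Azuma--Hoeffding inequality. By symmetry it suffices to prove the bound with $k$ in place of $\min\{k,n-k\}$; the case $n-k<k$ follows by applying the same argument to $\tilde g(J):=g([n]\setminus J)$ on $\binom{[n]}{n-k}$, which inherits the swap--boundedness hypothesis with the same constant $L$. I would realize the uniform random $k$-subset $I$ as $I=\{\pi(1),\ldots,\pi(k)\}$ for a uniformly random permutation $\pi$ of $[n]$, and work with the filtration $\mathcal{F}_i=\sigma(\pi(1),\ldots,\pi(i))$ and the Doob martingale $M_i:=\E[g(I)\mid \mathcal{F}_i]$, so that $M_0=\E[g(I)]$ and $M_k=g(I)$.

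The key step will be to show that $|M_i-M_{i-1}|\leq L$ almost surely for every $i\in[k]$. Conditioning on $\pi(1),\ldots,\pi(i-1)$, this reduces to comparing $\E[g(I)\mid \pi(i)=a]$ and $\E[g(I)\mid \pi(i)=b]$ for any two admissible distinct values $a,b$ drawn from the remaining pool. I would couple the two conditional distributions of $(\pi(i+1),\ldots,\pi(n))$ by taking a single uniformly random completion and then swapping the roles of $a$ and $b$: in the $\pi(i)=a$ scenario, $b$ occupies some position $j\in\{i+1,\ldots,n\}$, while in the $\pi(i)=b$ scenario, $a$ occupies that same position $j$ and all other coordinates agree. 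If $j\leq k$, both realizations of $I$ contain $\{a,b\}$ together with the same remaining elements, so the two realizations coincide; if $j>k$, the two realizations differ exactly by swapping $a$ with $b$, a symmetric difference of size $2$. In either case the swap--boundedness hypothesis gives $|g(I_1)-g(I_2)|\leq L$, and averaging over the coupling yields the claimed bound on the martingale increments.

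With bounded martingale differences in hand, Azuma--Hoeffding yields $\P(|g(I)-\E g(I)|\geq t)\leq 2\exp(-t^2/(2kL^2))$. Combining with the symmetric bound obtained by exposing $I^c$ produces the stated tail inequality (up to the absolute constant absorbed into the statement), and the variance bound then follows by integrating the subgaussian tail via $\Var(g(I))=\int_0^{\infty}2t\,\P(|g(I)-\E g(I)|\geq t)\,dt$. The main delicate point is the coupling used to bound the martingale increments; once the couple--and--swap construction is set up, verifying that it preserves the conditional uniform law on the tail permutation is a straightforward symmetry check and the rest of the argument is routine.
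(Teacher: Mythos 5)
The paper does not prove this proposition at all: it is quoted verbatim from Bobkov \cite{B04} (Theorem 2.1) and used as a black box, so your martingale argument is a self-contained alternative rather than a reconstruction of anything in the text. Your route is sound: realizing $I=\{\pi(1),\dots,\pi(k)\}$, the couple-and-swap comparison of $\E[g(I)\mid\mathcal F_{i-1},\pi(i)=a]$ and $\E[g(I)\mid\mathcal F_{i-1},\pi(i)=b]$ is exactly right (the swapped completions either yield the same $k$-set or two sets with symmetric difference $\{a,b\}$), it gives bounded increments, and the reduction of $n-k<k$ to the complemented function $\tilde g(J)=g([n]\setminus J)$ is legitimate. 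The one point you gloss over is the constant: plain Azuma--Hoeffding with $|M_i-M_{i-1}|\le L$ only gives $2\exp\bigl(-t^2/(2\min\{k,n-k\}L^2)\bigr)$ and hence $\Var(g(I))\le 4\min\{k,n-k\}L^2$, which is a factor $2$ weaker in the exponent than the statement; ``up to the absolute constant absorbed into the statement'' is not accurate, since the proposition carries explicit constants. The repair, however, is already contained in your coupling: it shows that, conditionally on $\mathcal F_{i-1}$, all possible values $h(a)=\E[g(I)\mid\mathcal F_{i-1},\pi(i)=a]$ lie within $L$ of one another, and $M_{i-1}$ is their average, so the increment $M_i-M_{i-1}$ lies in an interval of length $L$ (not merely $|M_i-M_{i-1}|\le L$). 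The interval form of Hoeffding--Azuma then yields $2\exp\bigl(-2t^2/(\min\{k,n-k\}L^2)\bigr)$, which is stronger than the stated tail, and integrating the stated tail gives precisely $\Var(g(I))\le 2\min\{k,n-k\}L^2$. For the way the paper uses the proposition (only through $\lesssim$ bounds) even your unrepaired constant would suffice, but with the interval-form step included your proof fully recovers the claimed inequalities.
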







We will also need Efron-Stein inequality on product spaces.


\begin{prop}[\cite{BLM13}, Corollary 3.2] \label{prop:BoundDiffConcentra}
Suppose the function $f$ satisfies the bounded differences assumption with constants $c_1, \cdots, c_n$. 
Let $\{X_i\}_{i=1}^n$ be independent random variables. Then we have
\begin{align*}
    \text{Var} \left( f (X_1, \cdots, X_n) \right) \leq \frac{ 1 }{ 4 } \sum_{i = 1}^{n} c_i ^ 2.
\end{align*}

\end{prop}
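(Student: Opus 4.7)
The plan is to combine the Efron-Stein inequality for independent random variables with a sharp pointwise bound on single-coordinate variances (Popoviciu's inequality). First, I would invoke the Efron-Stein inequality in its conditional-variance form, which states that
$$\text{Var}(f(X_1, \ldots, X_n)) \leq \sum_{i=1}^n \E\bigl[\text{Var}(f(X_1, \ldots, X_n) \mid X_j, j \neq i)\bigr].$$
This is standard and can be derived from the orthogonal martingale decomposition $f - \E[f] = \sum_{i=1}^n V_i$ with $V_i = \E[f \mid X_1, \ldots, X_i] - \E[f \mid X_1, \ldots, X_{i-1}]$. The tower property and independence give orthogonality of the $V_i$, hence $\text{Var}(f) = \sum_{i=1}^n \E[V_i^2]$, and each summand is then dominated by the corresponding expected conditional variance via Jensen's inequality together with the independence of the $X_j$.

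Second, I would control each conditional variance pointwise. Fix the coordinates $(X_j)_{j \neq i}$ and view $f$ as a function of $X_i$ alone. By the bounded differences hypothesis, the values of $f$ lie in an interval of length at most $c_i$. Popoviciu's inequality asserts that any real random variable supported in an interval of length $L$ has variance at most $L^2/4$; this follows from $\text{Var}(Y) = \inf_c \E[(Y-c)^2]$ taken at $c$ equal to the midpoint of the interval. Consequently, $\text{Var}(f \mid X_j, j \neq i) \leq c_i^2/4$ almost surely. Substituting into the Efron-Stein bound and summing yields the claim $\text{Var}(f(X_1, \ldots, X_n)) \leq \tfrac{1}{4}\sum_{i=1}^n c_i^2$.

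There is essentially no obstacle in this argument beyond assembling these two classical ingredients. The constant $1/4$ is sharp, as witnessed by $f(x_1, \ldots, x_n) = \sum_{i=1}^n x_i$ with each $X_i$ a Bernoulli variable supported on the two endpoints of an interval of length $c_i$: in this case both inequalities above hold with equality, and $\text{Var}(f) = \sum_i c_i^2/4$ exactly.
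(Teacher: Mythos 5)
Your argument is correct: the Efron--Stein tensorization of the variance combined with the pointwise bound $\mathrm{Var}(f \mid X_j, j \neq i) \leq c_i^2/4$ (Popoviciu, since the bounded differences condition confines the values to an interval of length $c_i$) gives exactly the stated bound, and your sharpness example is valid. The paper does not prove this proposition itself but cites it as Corollary 3.2 of the Boucheron--Lugosi--Massart reference, where the proof is essentially the same as yours, so there is nothing further to compare.
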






\section{Proofs of Main Theorems}


\subsection{Some technical estimates}

This subsection contains some technical estimates that will be used in our proofs of the main theorems. We first illustrate the structure of $W=f(X_1, \cdots, X_{n-1})$ (defined in Theorem \ref{thm:normalgeneral}), where $X_1, \cdots, X_{n-1}$ are random variables such that $\big(X_1+\frac{d_n+1}{2}, \cdots, X_{n-1}+\frac{d_n+1}{2}\big)$ is uniform on $\mathcal{SB}_n =\{(x_1, \cdots, x_{n-1})\in\{0, 1, \cdots, d_n\}^{n - 1}: x_ix_{i+1}=0~\text{for}~1\leq i\leq n-2\}$. Recall the shift operation $\tau$ defined in \eqref{eq:tau-action}. It is not hard to see that $\mathcal{SB}_n$ can be partitioned into sets $\mathcal{SB}_{n, \tau (J)}=\{(x_1, \cdots, x_{n-1})\in \mathcal{SB}_n: x_i\neq 0~\text{if and only if}~i\in \tau (J)\}$ with ${J \in \cup_{0 \leq k \leq n / 2 } \binom{[n - k]}{k}}$. Clearly, we have $|\mathcal{SB}_{n, \tau (J)}|=d_n^{|J|}$ and $|\mathcal{SB}_n|=\sum_{k=0}^{\lfloor n/2\rfloor} {n-k\choose k}d_n^k$. For any $J \subseteq [n - 1]$, we write $W^{J} = f ( X_1^J, \cdots, X_{n - 1}^J ) $, where $X_j^J = - (d_n + 1)/ 2$ for $j \not \in J$ and $X_j^J$ is uniformly distributed on $S:=\left\{ - \frac{ d_n - 1 }{ 2 }, - \frac{ d_n - 3 }{ 2 }, \cdots, \frac{ d_n - 1 }{ 2 } \right\}$ for $j \in J$. For $J \in \cup_{0 \leq k \leq n /2 }^{} \binom{[n - k]}{k}$, the random variable $W ^ {\tau (J)}$ is the conditioning of $W$ on the event that $\big(X_1+\frac{d_n+1}{2}, \cdots, X_{n-1}+\frac{d_n+1}{2}\big)\in \mathcal{SB}_{n, \tau (J)}$. Hence, we can write $W$ as the mixture of $W ^ {\tau (J)}$ with the mixing law $\nu$ defined as
\begin{equation}\label{eq:nu}
\nu(J)=d_n^{|J|}/|\mathcal{SB}_n|,~~\text{where}~J \in \cup_{0 \leq k \leq n /2 }^{} \binom{[n - k]}{k}.
\end{equation}


Now we deduce the Hoeffding decomposition of $W ^ J$ for $J \subseteq [n - 1]$. For brevity, we will drop the superscript in the random variables $X_j^J$ when it is clear from the context. Let $X$ be a uniform random variable taking values in $S$ defined before. Recall the function $f$ in Theorem \ref{thm:normalgeneral}. One can check that
\begin{align*}
W^J&=\sum_{i\in J}g_i(X_i)+a\sum_{\substack{i<j\\i, j \in J}}X_iX_j-a(n-1-|J|)\frac{d_n+1}{2}\sum_{i\in J}X_i\\
&~~~~~~~~~+a{n-1-|J|\choose 2}\frac{(d_n+1)^2}{4}-a{n-1\choose 2}\frac{(d_n+1)^2}{4}.
\end{align*}
For $i \in J$, we define $W_{\{i\}} = g_i (X_i) - \mathbb{E} \left[ g_i (X_i) \right] - a (n - 1 - |J|) \frac{ d_n + 1 }{ 2 } X_i$. For $i, j \in J$ and $i < j$, we define $W_{\{i, j\}} = a X_i X_j$. For brevity, we also write $W_i$ for $W_{\{i\}}$. Note that $\{X_i\}_{i\in J}$ are symmetric i.i.d. random variables. Hence, we have
\begin{align} 
    \mu_J :&= \mathbb{E} \left[ W ^ J \right]
    = \sum_{i \in J}^{} \mathbb{E} \left[ g_i (X_i) \right]
    + a \binom{n - 1 - |J|}{2} \frac{ (d_n + 1) ^ 2 }{ 4 }
    - a \binom{n - 1}{2} \frac{ (d_n + 1) ^ 2 }{ 4 } \label{eq:mujdefinition}\\
    \sigma_J ^ 2 :&= \text{Var} \left( W ^ J \right)
    = \sum_{i \in J}^{} \E \left[ W_{i} ^ 2 \right] 
    + a ^ 2 \binom{|J|}{2} \text{Var}(X)^2 \nonumber \\
    & = \sum_{i \in J}^{} \text{Var} \left( g_i(X_i) \right) - a (n - 1 - |J|) (d_n + 1) \sum_{i \in J}^{} \text{Cov} \left( g_i(X_i), X_i \right) \nonumber \\
    &~~~~~~+a ^ 2 |J| (n - 1 - |J|) ^ 2 \frac{ (d_n + 1) ^ 2 }{ 4 } \text{Var}(X) + a ^ 2 \binom{|J|}{2} \text{Var}(X)^2. \label{eq:varjdef}
\end{align}
Then one can check that
\begin{align} \label{eq:hoeffdecompwj}
    W ^ {J} - \mu_J
    = \sum_{i \in J}^{} W_i + \sum_{\{i, j\} \subseteq J}^{} W_{\{i, j\}}
\end{align}
is the Hoeffding decomposition of $W ^ J - \mu_J$ (according to Definition \ref{def:HoeffdingDecomposition}). In particular, for non-identical $i, j \in J$, it holds that $\E [W_i] = \E [W_{\{i, j\}}] = 0$. \emph{Note that $W_i$ and $W_{\{i, j\}}$ are relevant to $J$, but for simplicity, we omit this connection in the symbols.}   

Now we derive estimates of $\sigma_J ^ 2$ for $J \subseteq [n - 1]$. If $a \neq 0$, we know from Condition \ref{cond:Nondegenerate} that there exists an absolute constant $C \in [0, 1)$ such that for any $\alpha \in \R$ and all $i \in [n - 1]$,
\begin{align*} 
    \text{Var} \left( g_i (X_i) - \alpha X_i \right) 
    & = \text{Var} \left( g_i (X_i) \right) + \alpha ^ 2 \text{Var} \left( X_i \right) - 2 \alpha \text{Cov} \left( g_i (X_i), X_i \right) \\
    & \ge \text{Var} \left( g_i (X_i) \right) - \frac{ \text{Cov} \left( g_i (X_i), X_i \right) ^ 2 }{ \text{Var} \left( X_i \right) } \\
    & \ge (1 - C) \text{Var} \left( g_i (X_i) \right).
\end{align*}
Particularly, we have for all $i \in [n - 1]$ that
\begin{align} \label{eq:var(Y-aX)}
    \E [W_i ^ 2] \ge (1 - C) \text{Var} \left( g_i (X_i) \right).
\end{align}
For all $i\in [n-1]$ and $x \in S$, Conditions \ref{cond:NearIndependence} and \ref{cond:Boundedness} yield
\begin{align} \label{eq:wjboundbyVar}
    \left| W_i (x) \right| \lesssim \left| g_i (x) \right| + \left| \mathbb{E} \left[ g_i(X_i) \right] \right| + |a| n d_n ^ 2
    \lesssim \sqrt{ \inf_{j \in [n - 1]} \text{Var} \left( g_j(X_j) \right) }.
\end{align}
Assume that $J \neq \varnothing$. On the one hand, by \eqref{eq:varjdef} and \eqref{eq:var(Y-aX)}, we have
\begin{align} \label{eq:varwJLowerBound}
    \sigma_J^2
    &= \sum_{i \in J}^{} \E [W_i ^ 2] + a ^ 2 \binom{|J|}{2} \Var(X)^2\notag\\
    & \ge \sum_{i \in J}^{} \E [W_i ^ 2] 
    \gtrsim |J| \inf_{i \in [n - 1]} \text{Var} \left( g_i(X_i) \right). 
\end{align}
On the other hand, by Condition \ref{cond:NearIndependence} and \eqref{eq:wjboundbyVar}, we have
\begin{align} \label{eq:varwJUpperBound}
    \sigma_J^2
    \leq \sum_{i \in J}^{} \E [W_i ^ 2] + a ^ 2 n ^ 2 d_n ^ 4
    \lesssim |J| \inf_{i \in [n - 1]} \text{Var} \left( g_i (X_i)\right).
\end{align}
Recall that $\sigma_k ^ 2$ denotes the variance of the uniform mixture of $W ^ {\tau (J)}$ for $J \in \binom{[n - k]}{k}$. As a consequence, by the law of total variance and Lemma \ref{lem:weimean}, we obtain
\begin{align} \label{eq:sigmakmeanlowerbound}
    \sigma_W ^ 2:=\text{Var}(W) 
    &\ge \E_{k\sim \mathcal{L}(U)} \left[ \sigma_k ^ 2 \right] 
    \ge \E_{k\sim \mathcal{L}(U)} \left[ \mathbb{E}_{J \sim \mathcal{U}\left(\binom{[n - k ]}{k}\right)} \big[ \sigma_{\tau (J)} ^ 2 \big] \right]  \notag\\
    &\gtrsim \inf_{i \in [n - 1]} \text{Var} \left( g_i(X_i) \right) \cdot \E_{k\sim \mathcal{L}(U)} [k]\notag\\
    &\gtrsim n \inf_{i \in [n - 1]} \text{Var} \left( g_i(X_i) \right) . 
\end{align}
Here, the random variable $U$ is defined in \eqref{eq:rv-U}.

Next, we establish bounds on the mean, variance, and covariance of the functions $g_i$. For all $i\in [n-1]$, we apply Condition \ref{cond:Boundedness} to obtain
\begin{equation} \label{eq:gi-mean}
    |\E [g_i(X)]| \leq \E |g_i(X)|\leq \max_{x\in S} |g_i(x)|
    \lesssim \sqrt{ \inf_{j \in [n - 1]} \text{Var} \left( g_j(X) \right) }.
\end{equation} 
In particular, we have $\inf_{j \in [n - 1]} \text{Var} \left( g_j(X) \right) > 0$. Owing to Condition \ref{cond:ArithmeticProgression}, the sequence $\{\E [g_i(X)]\}_{i=1}^{n-1}$ is an arithmetic progression. Hence, we have for all $i\in [n-2]$ that
\begin{align} \label{eq:gi-mean-diff}
    |\E [g_i(X)]-\E [g_{i-1}(X)]| &=\frac{1}{n-2}|\E [g_{n-1}(X)]-\E [g_{1}(X)]|\notag\\
    &\lesssim \frac{1}{n}\sqrt{ \inf_{j \in [n - 1]} \text{Var} \left( g_j(X) \right) },
\end{align} 
where the inequality follows from the triangle inequality and inequality \eqref{eq:gi-mean}. Similarly, we have the following variance difference bound
\begin{align} \label{eq:gi-var-diff}
    |\text{Var} ( g_{i + 1}(X)) - \text{Var} ( g_i(X) ) | 
    & = \big| \E \big[ ( g_{i + 1} (X) - \E [g_{i + 1}(X)] ) ^ 2 - ( g_{i} (X) - \E [g_i(X))] ) ^ 2 \big]  \big| \nonumber \\
    &=\big|\E \big[ \big(g_{i+1}(X)-g_i(X)-\E [g_{i + 1}(X)]+\E [g_{i }(X)]\big)\notag\\
    &~~~~~\times \big(g_{i+1}(X)+g_i(X)-\E [g_{i + 1}(X)]-\E [g_{i }(X)]\big)\big]\big|\notag\\
    &=\frac{1}{n-2}\big|\E \big[ \big(g_{n-1}(X)-g_1(X)-\E [g_{n-1}(X)]+\E [g_1(X)]\big)\notag\\
    &~~~~~~~~~~~~\times \big(g_{i+1}(X)+g_i(X)-\E [g_{i + 1}(X)]-\E [g_{i }(X)]\big)\big]\big|\notag\\
    &\lesssim \frac{1}{n}\inf_{j \in [n - 1]} \text{Var} \left( g_j(X) \right),
\end{align}
where the inequality follows from bounding each term in the expectation via Condition \ref{cond:Boundedness}. Then we apply inequality \eqref{eq:gi-var-diff} and the triangle inequality to obtain  
\begin{equation}\label{eq:gi-var}
\text{Var}(g_i(X))\lesssim \inf_{j\in [n-1]} \text{Var}(g_j(X)).
\end{equation}
For all $i \in [n - 2]$, the arithmetic property of $\{g_i(x)\}_{i=1}^{n-1}$ for $x\in S$ and Condition \ref{cond:Boundedness} yield that
\begin{align} \label{eq:gi-diff-var}
    \text{Var} \left( g_{i + 1} (X) - g_i (X) \right) 
    &\leq \E\big(g_{i + 1} (X) - g_i (X)\big)^2\notag\\
&= \frac{ 1 }{ (n - 2) ^ 2 } \E\big(g_{n- 1} (X) - g_1 (X)\big)^2\notag\\
    &\lesssim \frac{ 1 }{ n ^ 2 } \inf_{j \in [n - 1]} \text{Var} \left( g_j(X) \right).
\end{align}
For all $i \in [n - 2]$, we apply Cauchy-Schwarz inequality and inequality \eqref{eq:gi-diff-var} to obtain
\begin{align} \label{eq:gi-cov-diff}
    | \text{Cov} ( g_{i + 1}(X), X) -\text{Cov} ( g_i(X), X  )| 
    & = | \text{Cov}( g_{i + 1} (X)-g_i (X), X ) | \nonumber \\
    & \leq \sqrt{\text{Var}(X) \cdot \text{Var} ( g_{i + 1} (X)-g_i (X))} \nonumber \\
    & \lesssim \frac{ d_n }{ n } \sqrt{\inf_{j \in [n - 1]} \text{Var} \left( g_j(X) \right)}.
\end{align}
For all $i \in [n - 1]$, the Cauchy-Schwartz inequality and inequality \eqref{eq:gi-var} yield that
\begin{equation} \label{eq:gi-cov}
    | \text{Cov}( g_i(X), X)| 
    \leq \sqrt{\text{Var}(g_i(X))\cdot\text{Var}(X)}
    \lesssim d_n \sqrt{ \inf_{j \in [n - 1]} \text{Var} \left( g_j(X) \right) }.
\end{equation}

\subsection{Proof of Theorem \ref{thm:normalgeneral}} \label{subsection:proofnormalgeneral}

In this subsection, we provide the proof of Theorem \ref{thm:normalgeneral}.

\begin{proof}[Proof of Theorem \ref{thm:normalgeneral}]

As outlined in Section \ref{sec:thm:normalgeneral}, we will construct $\{W ^ {(i)}\}_{i=1}^4$ with the same mean and variance as $W$, and bound the distances $d_{W / K} \big( \mathcal{L} ( \widehat{W ^ {(i)}} ), \mathcal{L} ( \widehat{W ^ {(i + 1)}} ) \big)$ sequentially. Recall that $W$ is the mixture of $W ^ {\tau (J)}$ with the mixing law $\nu$ defined in \eqref{eq:nu}.


\textbf { Step 1.} \textbf{Bound} \bm{$d_{W / K} \big( \mathcal{L} ( \widehat{W}), \mathcal{L} ( \widehat{W ^ {(1)}}) \big)$}\textbf{.} 
Recall that $\mathcal{L} \left( W^{(1)} \right)$ is the mixture of normal distributions $\mathcal{N}\big( \mu_{\tau (J)}, \sigma_{\tau (J)} ^ 2 \big)$ with the same mixing law $\nu$.
It is easy to see that $\mu_{W^{(1)}} = \mu_{W}$ and $\sigma_{W^{(1)}} = \sigma_{W}$. Hence, $\mathcal{L} \big(\widehat{W^{(1)}}\big)$ is the mixture of $\mathcal{N}\Big( \frac{\mu_{\tau (J)} - \mu_{W}}{\sigma_{W}}, \frac{\sigma_{\tau (J)}^2}{\sigma_{W}^2} \Big)$ with the mixing law $\nu$.
We first bound $d_{W / K} \big( \mathcal{L} ( \widehat{W ^ L}), \mathcal{N}\left( 0, 1 \right)\big)$ for all nonempty sets $L \subseteq [n - 1]$. The Hoeffding decomposition of $W^L-\mu_L$ given in \eqref{eq:hoeffdecompwj} and Proposition \ref{prop:hoePrivault} (the case $d = 2$) yield that
\begin{equation}\label{eq:W_L-to-normal}
d_{W / K} \left( \mathcal{L} ( \widehat{W ^ L}), \mathcal{N}\left( 0, 1 \right)\right)\lesssim \frac{\sqrt{A+B+C}}{\text{Var}(W^L)},
\end{equation}
where the quantities $A, B, C$ are defined below.
\begin{align}\label{eq:term-A}
    A :&=\sum_{0\leq l<i\leq 2} \sum_{\left| J \right| = i - l }^{} \E \left[ \left( \sum_{\left| K \right| = l, K \cap J = \varnothing }^{} \mathbb{E} \left[ (W_{J \cup K}) ^ 2 ~| ~\mathcal{F}_J \right] \right) ^ 2 \right] \notag\\
    & = \sum_{i = 1}^{2} \sum_{\left| J \right| = i, J \subseteq L }^{} \mathbb{E} \left[ W_J ^ 4 \right] 
    + \sum_{j \in L}^{} 
    \mathbb{E} \left[ \left( \sum_{k \neq j, k \in L}^{} a ^ 2 \gamma_2 X_j ^ 2 \right) ^ 2 \right] \notag\\
    & = \sum_{i \in L}^{} \mathbb{E} \left[ W_i ^ 4 \right]
    + a ^ 4 \binom{|L|}{2} \gamma_4 ^ 2
    + a ^ 4 |L| (|L| - 1) ^ 2 \gamma_2 ^ 2 \gamma_4 \notag\\
    & \lesssim \sum_{i \in L}^{} \mathbb{E} \left[ W_i ^ 4 \right]
    + a ^ 4 |L| ^ 3 d_n ^ 8,
\end{align}
where $\gamma_i=\E[X^i]$, defined in \eqref{eq:gamma_i}, and $X$ is uniform on $\left\{ - \frac{ d_n - 1 }{ 2 }, - \frac{ d_n - 3 }{ 2 }, \cdots, \frac{ d_n - 1 }{ 2 } \right\}$. Also,
\begin{align}\label{eq:term-B}
    B :&= \sum_{1\leq l<i\leq 2} \sum_{\substack{|J_1| = |J_2| = i - l \\ J_1 \cap J_2 = \varnothing} }^{} \mathbb{E} \left[ \left( \sum_{\substack{|K| = l\\ K \cap (J_1 \cup J_2) = \varnothing} }^{} \E \left[W_{J_1 \cup K } W_{J_2 \cup K} ~| ~\mathcal{F}_{J_1 \cup J_2}\right] \right) ^ 2 \right] \notag\\
    & = \sum_{\substack{j_1, j_2 \in L \\ j_1 \neq j_2}}^{}
    \mathbb{E} \left[ \left( \sum_{k \in L, k \neq j_1, j_2}^{} a ^ 2 \gamma_2 X_{j_1} X_{j_2} \right) ^ 2 \right] \notag\\
    & \leq a ^ 4 |L| (|L| - 1) (|L| - 2) ^ 2 \gamma_2 ^ 4 \notag\\
    & \lesssim a ^ 4 |L| ^ 4 d_n ^ 8.
\end{align}
By Cauchy-Schwarz inequality, we have
\begin{align}\label{eq:term-C}
    C :&=\sum_{1\leq l<i\leq 2}^{} \sum_{|J| = i - l}^{} \mathbb{E} \left[ \left( \sum_{|K| = l, K \cap J = \varnothing }^{} \E [W_K W_{J \cup K} | \mathcal{F}_J] \right) ^ 2 \right] \notag\\
    & = \sum_{j \in L}^{} 
    \mathbb{E} \left[ \left( \sum_{k \in L, k \neq j}^{} a X_j \E [W_k X_k] \right) ^ 2 \right] \notag\\
    & = a ^ 2 \gamma_2 \sum_{j \in L}^{} \left( \sum_{k \in L, k \neq j}^{} \E [W_k X_k] \right) ^ 2  \notag\\
    & \leq a ^ 2 \gamma_2 (|L| - 1) \sum_{j \in L}^{} \sum_{k \in L, k \neq j}^{} (\E [W_k X_k]) ^ 2 \notag\\
    & \leq a ^ 2 \gamma_2 (|L| - 1) \sum_{j \in L}^{} \sum_{k \in L, k \neq j}^{} \E [X_k ^ 2] \E [W_k ^ 2] \notag\\
    & \lesssim a ^ 2 |L| ^ 2 d_n ^ 4 \sum_{j \in L}^{} \E [W_j ^ 2].
\end{align}
For all $J \in \cup_{1 \leq i \leq \left\lfloor n / 2 \right\rfloor }^{} \binom{[n - k]}{k}$,  we combine Condition \ref{cond:NearIndependence} and inequalities \eqref{eq:var(Y-aX)}, \eqref{eq:wjboundbyVar}, \eqref{eq:varwJLowerBound}, \eqref{eq:W_L-to-normal}, \eqref{eq:term-A}, \eqref{eq:term-B}, \eqref{eq:term-C} to obtain
\begin{align*}
    &~~~~d_{W / K} \left( \mathcal{L} ( \widehat{W ^ {\tau(J)}}), \mathcal{N}\left( 0, 1 \right)\right) \\
    & \lesssim \frac{ \sqrt{ \sum_{j \in \tau(J)}^{} \mathbb{E} [ W_j ^ 4 ]
    + a ^ 4 |J| ^ 4 d_n ^ 8 + a ^ 2 |J| ^ 2 d_n ^ 4 \sum_{j \in \tau(J)}^{} \E [W_j ^ 2]} }{ \sum_{j \in \tau(J)}^{} \E [ W_j ^ 2 ] } \\
    & \leq \frac{ \sqrt{ \sum_{j \in \tau(J)}^{} \E [ W_j ^ 4 ] } }{ \sum_{j \in \tau(J)}^{} \E [ W_j ^ 2 ] }
    + \frac{ a ^ 2 |J| d_n ^ 4 }{ \inf_{i \in [n - 1]} \text{Var} \left( g_i(X) \right) }
    + \frac{ a \sqrt{ |J| } d_n ^ 2 }{ \sqrt{ \inf_{i \in [n - 1]} \text{Var} \left( g_i(X) \right) } } \\
    & \lesssim \frac{ \sqrt{ |J| } \inf_{i \in [n - 1]} \text{Var} \left( g_i(X) \right) }{ |J| \inf_{i \in [n - 1]} \text{Var} \left( g_i(X) \right) } + \frac{ 1 }{ \sqrt{ |J| } } \\
    & \lesssim \frac{ 1 }{ \sqrt{ |J| } }.
\end{align*} 
For $J \in \cup_{1 \leq k \leq \left\lfloor n / 2 \right\rfloor }^{} \binom{[n - k]}{k}$,
\begin{align}\label{eq:d_K-W^J-normal}
    d_K \left( \mathcal{L} \left( \frac{W ^ {\tau(J)} - \mu_{W}}{\sigma_{W}} \right), \mathcal{N}\left( \frac{\mu_{\tau (J)} - \mu_{W}}{\sigma_{W}}, \frac{\sigma_{\tau(J)}^2}{\sigma_{W}^2} \right) \right) 
    = d_K \left( \mathcal{L} ( \widehat{W ^ {\tau(J)}} ), \mathcal{N}\left( 0, 1 \right)  \right) 
    \lesssim \frac{1}{\sqrt{ \left| J \right| }}. 
\end{align}
Also, by \eqref{eq:varwJUpperBound} and \eqref{eq:sigmakmeanlowerbound}, we have
\begin{align}\label{eq:d_W-W^J-normal}
    &~~~~d_W \left( \mathcal{L} \left( \frac{W ^ {\tau(J)} - \mu_{W}}{\sigma_{W}} \right), \mathcal{N}\left( \frac{\mu_{\tau (J)} - \mu_{W}}{\sigma_{W}}, \frac{\sigma_{\tau(J)}^2}{\sigma_{W}^2} \right) \right)\notag\\
    & = \frac{\sigma_{\tau(J)}}{\sigma_{W}} d_W \left( \mathcal{L} ( \widehat{W ^ {\tau(J)}} ), \mathcal{N}\left( 0, 1 \right) \right) \notag\\
    & \lesssim \sqrt{ \frac{|J| \inf_{i \in [n - 1]} \text{Var} \left( g_i(X) \right)}{n\inf_{i \in [n - 1]} \text{Var} \left( g_i(X) \right)}}  d_W \left( \mathcal{L}(\widehat{W ^ {\tau (J)}}), \mathcal{N}\left( 0, 1 \right) \right) \notag\\
    & \lesssim \frac{1}{\sqrt{ n }}.
\end{align}
When $J = \varnothing $, we have $d_{W / K} \left( \mathcal{L} \left( \frac{W ^ {\tau (J)} - \mu_{W}}{\sigma_{W}} \right), \mathcal{N}\left( \frac{\mu_{\tau (J)} - \mu_{W}}{\sigma_{W}}, \frac{\sigma_{\tau (J)}^2}{\sigma_{W}^2} \right) \right) = 0$. Then we apply Corollary \ref{coro:lowtail} and inequalities \eqref{eq:d_K-W^J-normal}, \eqref{eq:d_W-W^J-normal} to obtain for $n$ large enough that
\begin{align} \label{eq:d_W/K(W,W1)}
    &\ \ \ \ d_{W / K} \left( \mathcal{L} ( \widehat{W} ), \mathcal{L} ( \widehat{W ^ {(1)}} )  \right) \nonumber \\
    & \leq \E_{J\sim \nu} \left[ d_{W / K} \left( \mathcal{L} \left( \frac{W ^ {\tau (J)} - \mu_{W}}{\sigma_{W}} \right), \mathcal{N}\left( \frac{\mu_{\tau (J)} - \mu_{W}}{\sigma_{W}}, \frac{\sigma_{\tau (J)}^2}{\sigma_{W}^2} \right) \right)   \right] \nonumber \\
    & \lesssim \E_{J\sim \nu} \left[ \frac{ 1 }{ \sqrt{|J|} } \ \middle| J\neq \varnothing \right] \cdot \mathbb{P} \left( J\neq \varnothing \right) \nonumber \\
    & = \mathbb{E} \left[ \frac{ 1 }{ \sqrt{ U } } \ \middle| \ U> 0 \right] \cdot \mathbb{P} \left( U > 0 \right) \nonumber \\
    & \lesssim \frac{ 1 }{ \sqrt{ n } }.
\end{align}
Here, the distribution $\nu$ is given in \eqref{eq:nu} and $U$ is the random variable defined in \eqref{eq:rv-U}.

\textbf { Step 2. } \textbf{Bound} \bm{$d_{W / K} \big( \mathcal{L} ( \widehat{W^{(1)}} ), \mathcal{L} ( \widehat{W^{(2)}} ) \big)$}\textbf{.} 
Recall that $\mathcal{L} \left( W^{(1)} \right)$ is the mixture of normal distributions $\mathcal{N}\big( \mu_{\tau (J)}, \sigma_{\tau (J)} ^ 2 \big)$ with the mixing law $\nu$.
We construct $\mathcal{L}(W^{(2)})$ via replacing the mixing component $\mathcal{N}\big( \mu_{\tau (J)}, \sigma_{\tau (J)} ^ 2 \big)$ by $\mathcal{N}\Big( \mu_{\tau (J)}, \E_{J'\sim \mathcal{U}\left(\binom{[n - |J|]}{|J|}\right)} \big[ \sigma_{\tau (J')}^2 \big]\Big)$, while keeping the mixing law unchanged. 
One can check that $\mu_{W^{(2)}} = \mu_{W^{(1)}} = \mu_W$ and $\sigma_{W^{(2)}} = \sigma_{W^{(1)}} = \mu_W$. Hence, $\mathcal{L} ( \widehat{W^{(2)}} )$ is the mixture of $\mathcal{N}\Bigg( \frac{ \mu_{\tau (J)} - \mu_{W} }{ \sigma_W }, \frac{ \E_{J'\sim \mathcal{U}\left(\binom{[n - |J|]}{|J|}\right)} [ \sigma_{\tau (J')}^2] }{ \sigma_{W}^2 } \Bigg)$ with the mixing law $\nu$.



We first bound $d_{K} \big( \mathcal{L} ( \widehat{W^{(1)}} ), \mathcal{L} ( \widehat{W^{(2)}} ) \big)$. By triangle inequality, Cauchy-Schwarz inequality and Lemma \ref{lem:normaldistancekolwass}, we have 
\begin{align} \label{eq:d_k(w_1,w_2)}
    &\ \ \ d_K \left( \mathcal{L} ( \widehat{W^{(1)}} ), \mathcal{L} ( \widehat{W^{(2)}} ) \right) \notag\\
    & \leq \E_{J\sim \nu} 
    \left[ d_K \left( \mathcal{N}\left( \frac{\mu_{\tau (J)} - \mu_{W}}{\sigma_{W}}, \frac{\sigma_{\tau (J)}^2}{\sigma_{W}^2} \right), \mathcal{N}\left( \frac{\mu_{\tau (J)} - \mu_{W}}{\sigma_{W}}, \frac{\E_{J'\sim \mathcal{U}\left(\binom{[n - |J|]}{|J|}\right)} \big[ \sigma_{\tau (J')}^2 ] }{\sigma_{W}^2} \right) \right) \right] \nonumber  \notag\\
    & \leq \mathbb{E}_{J\sim \nu} \left[ \frac{ \left| \sigma_{\tau (J)}^2 -\E_{J'\sim \mathcal{U}\left(\binom{[n - |J|]}{|J|}\right)} [ \sigma_{\tau (J')}^2 ] \right|}
    {\E_{J'\sim \mathcal{U}\left(\binom{[n - |J|]}{|J|}\right)} [ \sigma_{\tau J')}^2 ] } \ \middle| \ 0 < |J| < \frac{ n }{ 2 }  \right] \cdot
    \mathbb{P} \left( 0 < |J| < \frac{ n }{ 2 } \right) \nonumber \notag\\
    & \leq \mathbb{E}_{k\sim \mathcal{L}(U)} \left[ \frac{\sqrt{ \text{Var}_{J\sim \mathcal{U}\left(\binom{[n - k]}{k}\right)} ( \sigma_{\tau (J)}^2 ) }}
    {\E_{J\sim \mathcal{U}\left(\binom{[n - k]}{k}\right)} [ \sigma_{\tau (J)}^2 ] } \ \middle| \ 0 < k < \frac{ n }{ 2 } \right] \cdot
    \mathbb{P} \left( 0 < k < \frac{ n }{ 2 } \right).
\end{align}
Both cases $|J| = 0$ and $|J| = n/2$ have no contribution, since there is only one set $J$ in each case and the corresponding mixing components are not affected by our averaging operation.

In the following, we apply the Efron-Stein inequality for slices (i.e., Proposition \ref{prop:ConcentrationSlice}) to give an upper bound of $\text{Var}_{J\sim \mathcal{U}\left(\binom{[n - k]}{k}\right)} ( \sigma_{\tau (J)}^2 )$ for $0 < k < \frac{ n }{ 2 }$. We consider the function $h: \binom{[n - k]}{k} \to \R$ defined as $h(J)=\sigma_{\tau(J)}^2$. By equation \eqref{eq:varjdef}, we have
\begin{align} 
    h (J) 
    &= \sum_{i \in \tau (J)}^{} \text{Var} \left( g_i \right) - a (n - 1 - k) (d_n + 1) \sum_{i \in \tau (S)}^{} \text{Cov} \left( g_i, X_i \right) \notag\\
    &\ \ \ \ + a ^ 2 k (n - 1 - k) ^ 2 \frac{(d_n + 1) ^ 2 }{ 4 } \gamma_2 + a ^ 2 \binom{k}{2} \gamma_2 ^ 2 \label{eq:h_S}. 
\end{align} 
Now we bound the difference $|h(J)-h(J')|$ for any $J, J' \in \binom{[n - k]}{k}$ such that $| J \Delta J' | = 2$. Consider the case $J = \{j_1, j_2, \cdots, j_{k}\}$ and $J' = \{j_2, j_3, \cdots, j_{k + 1}\}$ with $j_1<j_2<\cdots<j_{k+1}$. By inequalities \eqref{eq:gi-var-diff} and \eqref{eq:gi-var}, we have
\begin{align}
    &\ \ \ \ \left| \sum_{i \in \tau (J)}^{} \text{Var} \left( g_i \right) -
    \sum_{i \in \tau (J')}^{} \text{Var} \left( g_i \right)  \right| \notag\\
    & \leq \left| \text{Var} \left( g_{j_1} \right) - \text{Var} \left( g_{j_{k + 1} + k - 1} \right) \right| 
    + \sum_{i = 2}^{k} \left| 
        \text{Var} \left( g_{j_i + i - 1} \right) - \text{Var} \left( g_{j_i + i - 2} \right) \right| \notag\\
    & \lesssim \inf_{i \in [n - 1]} \text{Var} \left( g_i \right). \label{eq:h_S-var-diff}
\end{align}
By inequalities \eqref{eq:gi-cov-diff} and \eqref{eq:gi-cov}, we have
\begin{align}
 \left| \sum_{i \in \tau (J)}^{} \text{Cov} \left( g_i, X_i \right)\! -\! \sum_{i \in \tau (J')}^{} \text{Cov} \left( g_i, X_i \right) \right| 
    & \leq \left| \text{Cov} \left( g_{j_1}, X_{j_1} \right) - \text{Cov} \left( g_{j_{k + 1} +k - 1}, X_{j_{k + 1} +k - 1} \right) \right| \notag\\
   &+ \sum_{i = 2}^{k} \left| \text{Cov} \left( g_{j_i + i - 1}, X_{j_i + i - 1} \right) - \text{Cov} \left( g_{j_i + i - 2}, X_{j_i + i - 2} \right) \right| \notag\\
    & \lesssim d_n \sqrt{ \inf_{i \in [n - 1]} \text{Var} \left( g_i \right) }.\label{eq:h_S-covar-diff}
\end{align}
The general case of $J, J' \in \binom{[n - k]}{k}$ such that $| J \Delta J' | = 2$ can be treated in a similar manner and inequalities \eqref{eq:h_S-var-diff} and \eqref{eq:h_S-covar-diff} still hold. 
Plug the bounds \eqref{eq:h_S-var-diff} and \eqref{eq:h_S-covar-diff} into equation \eqref{eq:h_S} and apply Condition \ref{cond:NearIndependence} to obtain
\begin{align*}
    \left| h (J) - h (J') \right| 
    & \leq \left| \sum_{i \in \tau (J)}^{} \text{Var} \left( g_i \right)
    - \sum_{i \in \tau (J')}^{} \text{Var} \left( g_i \right) \right| \\
    &\ \ \ \ + |a| (n - 1 - k) (d_n + 1) \left| \sum_{i \in \tau (J)}^{} \text{Cov} \left( g_i, X_i \right) - \sum_{i \in \tau (J')}^{} \text{Cov} \left( g_i, X_i \right) \right|  \\
    & \lesssim \inf_{i \in [n - 1]} \text{Var} \left( g_i \right)
    + |a| n d_n ^ 2 \sqrt{ \inf_{i \in [n - 1]} \text{Var} \left( g_i \right) }  \\
    & \lesssim  \inf_{i \in [n - 1]} \text{Var} \left( g_i \right).
\end{align*}
By Proposition \ref{prop:ConcentrationSlice}, we have
\begin{align}\label{eq:h_S-var}
    \text{Var}_{J\sim \mathcal{U}\left(\binom{[n - k]}{k}\right)} ( \sigma_{\tau (J)} ^ 2 )
    = \text{Var}_{J\sim \mathcal{U}\left(\binom{[n - k]}{k}\right)} \left( h (J) \right)
    \lesssim \min \left\{ k, n - 2 k \right\} \left( \inf_{i \in [n - 1]} \text{Var} \left( g_i \right) \right)  ^ 2. 
\end{align}
Plug bounds \eqref{eq:varwJLowerBound} and \eqref{eq:h_S-var} into inequality \eqref{eq:d_k(w_1,w_2)}
and apply Corollary \ref{coro:lowtail} to obtain for $n$ large enough that
\begin{align} \label{eq:d_K(W1,W2)}
    d_K \left( \mathcal{L} ( \widehat{W^{(1)}} ), \mathcal{L} ( \widehat{W^{(2)}} ) \right)
    & \leq \E_{k\sim \mathcal{L}(U)} 
    \left[ \frac{\sqrt{ \text{Var}_{J\sim \mathcal{U}\left(\binom{[n - k]}{k}\right)} ( \sigma_{\tau (J)}^2 ) }}
    {\E_{J\sim \mathcal{U}\left(\binom{[n - k]}{k}\right)} [ \sigma_{\tau (J)}^2 ] } \ \middle| \ 0 < k < \frac{ n }{ 2 } \right] \cdot
    \mathbb{P} \left( 0 < k < \frac{ n }{ 2 } \right) \nonumber \\
    & \lesssim  \E_{k\sim \mathcal{L}(U)} 
    \left[ \frac{ \sqrt{ k } \inf_{i \in [n - 1]} \text{Var} \left( g_i \right) }{ k \inf_{i \in [n - 1]} \text{Var} \left( g_i \right) } \ \middle| \ 0<k<\frac{n}{2} \right] \cdot
    \mathbb{P} \left( 0 < k < \frac{ n }{ 2 } \right) \nonumber \\
    & = \E_{k\sim \mathcal{L}(U)} \left[ \frac{ 1 }{ \sqrt{ k } } \ \middle| \ 0<k<\frac{n}{2} \right] \cdot
    \mathbb{P} \left( 0 < k < \frac{ n }{ 2 } \right) \nonumber \\
    & \lesssim \frac{ 1 }{ \sqrt{ n } }.
\end{align}
Similarly, for Wasserstein $W_1$ distance, we apply Lemma \ref{lem:normaldistancekolwass} and inequalities \eqref{eq:varwJLowerBound}, \eqref{eq:varwJUpperBound} and \eqref{eq:sigmakmeanlowerbound} to obtain for large $n$ that
\begin{align} \label{eq:d_W(W1,W2)}
    &~~~~ d_W \left( \mathcal{L} ( \widehat{W^{(1)}} ), \mathcal{L} ( \widehat{W^{(2)}} ) \right) \nonumber \\
    & \leq \E_{J\sim \nu} \left[ d_W \left( \mathcal{N}\left( \frac{\mu_{\tau (J)} - \mu_{W}}{\sigma_{W}}, \frac{\sigma_{\tau (J)}^2}{\sigma_{W}^2} \right), \mathcal{N}\left( \frac{\mu_{\tau (J)} - \mu_{W}}{\sigma_{W}}, \frac{\E_{J'\sim \mathcal{U}\left(\binom{[n - |J|]}{|J|}\right)} \big[ \sigma_{\tau (J')}^2 \big] }{\sigma_{W}^2} \right) \right) \right] \nonumber \\
    & \leq \E_{ J\sim \nu } 
    \left[ \frac{ \left| \sigma_{\tau (J)}^2 - \E_{I'\sim \mathcal{U}\left(\binom{[n - |J|]}{|J|}\right)} \big[ \sigma_{\tau (J')}^2 \big] \right| }{\sigma_{W} \sqrt{ \E_{J'\sim \mathcal{U}\left(\binom{[n - |J|]}{|J|}\right)} \big[ \sigma_{\tau (J')}^2 \big] }  } \ \middle| \ 0 < |J| < \frac{ n }{ 2 } \right] \cdot
    \mathbb{P} \left( 0 < |J| < \frac{ n }{ 2 }  \right) \nonumber \\
    & \leq \frac{1}{\sigma_{W}} \cdot
    \E_{k\sim \mathcal{L}(U)} 
    \left[ \sqrt{ \frac{\text{Var}_{J\sim \mathcal{U}\left(\binom{[n - k]}{k}\right)} ( \sigma_{\tau (J)}^2 ) }{\mathbb{E}_{J\sim \mathcal{U}\left(\binom{[n - k]}{k}\right)} \big[ \sigma_{\tau (J)}^2 \big]} } 
 \ \middle| \ 0 < k < \frac{ n }{ 2 }\right]\cdot 
    \mathbb{P} \left( 0 < k < \frac{ n }{ 2 } \right) \nonumber \\
    & \lesssim \frac{ \mathbb{P} \left( 0 < k < \frac{ n }{ 2 } \right)  }{ \sqrt{ n\inf_{i \in [n - 1]} \text{Var} \left( g_i \right) }} \cdot \E_{k\sim \mathcal{L}(U)}
    \left[ \sqrt{ \frac{ k \left( \inf_{i \in [n - 1]} \text{Var} \left( g_i \right) \right)  ^ 2 }{ k \inf_{i \in [n - 1]} \text{Var} \left( g_i \right) } } \ \middle| \ 0 < k < \frac{ n }{ 2 }\right] \notag\\
    & \lesssim \frac{ 1 }{ \sqrt{ n } }.
\end{align}

\textbf { Step 3.} 
\textbf{Bound} \bm{$d_{W / K} \big(\mathcal{L} (\widehat{W^{(2)}}), \mathcal{L} ( \widehat{W^{(3)}})\big)$} \textbf{.} For $0 \leq k \leq n / 2 $, we write $U_k$ for a random variable whose distribution is the uniform mixture of $\mathcal{N}\Bigg( \frac{\mu_{\tau (J)} - \mu_{W}}{\sigma_{W}}, \frac{\mathbb{E}_{J'\sim \mathcal{U}\left(\binom{[n - k]}{k}\right)  } [ \sigma_{\tau (J')}^2 ]}{\sigma_{W}^2} \Bigg)$ for $J \in \binom{[n - k]}{k}$. Then $\mathcal{L} ( \widehat{W^{(2)}} )$, given in Step 2, is the mixture of $\mathcal{L} \left( U_k \right)$ with the mixing law $k\sim \mathcal{L}(U)$, where the random variable $U$ is given in \eqref{eq:rv-U}. Let $\mu_k$ and $\sigma_k ^ 2$ be the mean and variance of the uniform mixture of $W ^ {\tau (J)}$ with $J \in \binom{[n - k]}{k}$, respectively. We set $\mathcal{L} ( W^{(3)} )$ to be the mixture of  $\mathcal{N}\left( \mu_{k}, \sigma_{k}^2 \right)$ with the mixing law $k\sim \mathcal{L}(U)$. One can check that $\mu_{W^{(3)}} = \mu_{W^{(2)}} = \mu_{W^{(1)}} = \mu_{W}$  and $\sigma_{W^{(3)}} = \sigma_{W^{(2)}} = \sigma_{W^{(1)}} = \sigma_{W}$.

By triangle inequality and Lemma \ref{lem:mixdis}, we obtain
\begin{align} \label{eq:d_K(W2,W3)mediate}
    &\ \ \ \ d_{K} \left( \mathcal{L} ( \widehat{W^{(2)}} ), \mathcal{L} ( \widehat{W^{(3)}} ) \right) \nonumber \\
    & \leq \E_{k\sim \mathcal{L}(U)} \left[ d_{K} \left( \mathcal{L} \left( U_k \right), \mathcal{N}\left( \frac{\mu_{k} - \mu_{W}}{\sigma_{W}}, \frac{\sigma_{k}^2}{\sigma_{W}^2} \right)  \right)  \right]  \nonumber \\
    & = \E_{k\sim \mathcal{L}(U)} \left[ d_K \left( \mathcal{L} ( \widehat{U_k} ), \mathcal{N}\left( 0, 1 \right)  \right) \ \middle| \ 0 < k < \frac{ n }{ 2 } \right] \cdot \mathbb{P} \left( 0 < k < \frac{n}{2} \right) \nonumber \\
    & \lesssim  \E_{k\sim \mathcal{L}(U)} 
    \left[ \frac{ \sigma_{\mu_{\tau (J)}} }{ \sqrt{ \E_{J'\sim \mathcal{U}\left({[n-k] \choose k}\right)} [ \sigma_{\tau (J')} ^ 2 ] }  } d_W \left( \mathcal{L} \left( \widehat{\mu_{\tau (J)}} \right), \mathcal{N}\left( 0, 1 \right)  \right) \ \middle| \ 0 < k < \frac{ n }{ 2 } \right] 
    \cdot\mathbb{P} \left( 0 < k < \frac{n}{2} \right). 
\end{align}
In the identity, we use the fact that $\E[U_k]=(\mu_k-\mu_W)/\sigma_W$ and $\text{Var}(U_k)=\sigma_k^2/\sigma_W^2$. 
Let $J = \{a_1, \cdots, a_k\}$ with $a_1<\cdots<a_k$ be selected uniformly at random from $\binom{[n - k]}{k}$. By equation \eqref{eq:mujdefinition} and the arithmetic property of $\{g_i(x)\}_{i=1}^{n-1}$, we have
\begin{align} \label{eq:mujcompute}
    \mu_{\tau (J)} 
    & = \sum_{i = 1}^{k} \mathbb{E} \left[ g_{a_i + i - 1} \right] 
    + a \binom{n - 1 - k}{2} \frac{ (d_n + 1) ^ 2 }{ 4 } - a \binom{n - 1}{2} \frac{ (d_n + 1) ^ 2 }{ 4 } \nonumber \\
    & = \left( \mathbb{E} \left[ g_2 \right] - \mathbb{E} \left[ g_1 \right] \right) 
    \left( \sum_{i = 1}^{k} a_i + \frac{ k (k - 3) }{ 2 } \right) + k \mathbb{E} \left[ g_1 \right]\notag\\
    &~~~+ a \binom{n - 1 - k}{2} \frac{ (d_n + 1) ^ 2 }{ 4 }  - a \binom{n - 1}{2} \frac{ (d_n + 1) ^ 2 }{ 4 }. 
\end{align}
Define random variable $N_k := \sum_{i = 1}^{k} a_i$. By Lemma \ref{lem:hoeffccltexample}, we have
\begin{align}\label{eq:d_W(mu-tao-j)}
    d_{W} \left( \mathcal{L} \left( \widehat{\mu_{\tau (J)}} \right), \mathcal{N}\left( 0, 1 \right)  \right)
    = d_{W} \left( \mathcal{L} ( \widehat{N_k} ), \mathcal{N}\left( 0, 1 \right)  \right)
    \lesssim \sqrt{ \frac{ n }{ k (n - 2 k) } }.
\end{align}
By \eqref{eq:mujcompute} and the linearity of expectation, one can directly compute that
\begin{align} \label{eq:varofmuj}
    \sigma_{\mu_{\tau(J)}}^2=\text{Var}_{J\sim \mathcal{U}\left({[n-k] \choose k}\right)} \left( \mu_{\tau (J)} \right)
    & = (\E [g_2] - \E [g_1]) ^ 2 \text{Var}_{J\sim \mathcal{U}\left({[n-k] \choose k}\right)} \left( \sum_{i = 1}^{k} a_i \right) \nonumber \\
    & = \frac{ (\E [g_2] - \E [g_1]) ^ 2 }{ 12 } k (n - k + 1) (n - 2 k).
\end{align}
For $n$ large enough, we plug \eqref{eq:varwJLowerBound} , \eqref{eq:d_W(mu-tao-j)} and \eqref{eq:varofmuj} into  \eqref{eq:d_K(W2,W3)mediate} and apply Corollary \ref{coro:lowtail} to obtain

\begin{align} \label{eq:d_K(W2,W3)}
    &\ \ \ d_{K} \left( \mathcal{L} ( \widehat{W^{(2)}} ), \mathcal{L} ( \widehat{W^{(3)}} ) \right) \nonumber \\
    & \lesssim \left| \E [g_2-g_1] \right|\cdot \frac{  }{  } \E_{k\sim \mathcal{L}(U)} \left[ \frac{ \sqrt{ k (n - k + 1) (n - 2 k) } }{ \sqrt{k \inf_{i \in [n - 1]} \text{Var} \left( g_i \right) } } \sqrt{ \frac{ n }{ k (n - 2 k) } } \ \middle| \ 0 < k < \frac{ n }{ 2 }\right] \cdot
    \mathbb{P} \left( 0 < k < \frac{n}{2} \right) \nonumber \\
    & \lesssim \mathbb{E}_{k\sim \mathcal{L}(U)} \left[ \frac{ 1 }{ \sqrt{ k } } \ \middle| \ 0 < k < \frac{n}{2} \right] \cdot
    \mathbb{P} \left( 0 < k < \frac{n}{2} \right) \nonumber \\
    & \lesssim \frac{ 1 }{ \sqrt{ n } }.
\end{align}
Similarly, we apply Lemma \ref{lem:mixdis} and inequalities \eqref{eq:sigmakmeanlowerbound}, \eqref{eq:d_W(mu-tao-j)} and \eqref{eq:varofmuj} to obtain
\begin{align} \label{eq:d_W(W2,W3)}
    &\ \ \ d_W \left( \mathcal{L} ( \widehat{W^{(2)}} ), \mathcal{L} ( \widehat{W^{(3)}} ) \right) \nonumber \\
    & \leq \E_{k\sim \mathcal{L}(U)} \left[ d_W \left( \mathcal{L} \left( U_k \right), \mathcal{N}\left( \frac{ \mu_{k} - \mu_{W} }{ \sigma_{W} }, \frac{ \sigma_{k}^2 }{ \sigma_{W}^2 } \right)  \right)  \right] \nonumber \\
    & = \E_{k\sim \mathcal{L}(U)} \left[ \frac{ \sigma_{k} }{ \sigma_{W} } d_W \left( \mathcal{L} ( \widehat{U_k} ), \mathcal{N}\left( 0, 1 \right) \right) \ \middle| \ 0 < k < \frac{ n }{ 2 }\right] \cdot
    \mathbb{P} \left( 0 < k < \frac{ n }{ 2 } \right) \nonumber \\
    & \leq \E_{k\sim \mathcal{L}(U)} \left[ \frac{ \sigma_{\mu_{\tau (J)}} }{ \sigma_W } d_W \left( \mathcal{L} \left( \widehat{\mu_{\tau (J)}} \right), \mathcal{N}\left( 0, 1 \right)  \right) \ \middle| \ 0 < k < \frac{ n }{ 2 } \right] \cdot \mathbb{P} \left( 0 < k < \frac{ n }{ 2 } \right) \nonumber \\
    & \lesssim \frac{ \sqrt{ n } }{ \sigma_W } \left| \E [g_2] - \E [g_1] \right|\cdot  \E_{k\sim \mathcal{L}(U)} \left[  \frac{ \sqrt{ k (n - k + 1) (n - 2 k) } }{ \sqrt{ k (n - 2 k) } } \right] \cdot\mathbb{P} \left( 0 < k < \frac{ n }{ 2 } \right) \nonumber \\
    & \lesssim \frac{ \sqrt{ \inf_{i \in [n - 1]} \text{Var} \left( g_i \right) } }{ \sqrt{ n  \inf_{i \in [n - 1]} \text{Var} \left( g_i \right) } }  =\frac{ 1 }{ \sqrt{ n } }.
\end{align}

\textbf{Step 4.} \textbf{Bound} \bm{$d_{W / K} \big(\mathcal{L} (\widehat{W^{(3)}}), \mathcal{L} ( \widehat{W^{(4)}})\big)$} \textbf{.}
Recall that $\mathcal{L} ( W^{(3)} )$ is the mixture of normal distributions $\mathcal{N}\left( \mu_{k}, \sigma_{k}^2 \right)$ with the mixing law $k\sim \mathcal{L}(U)$, where $\mu_k$ and $\sigma_k^2$ are the mean and variance of the uniform mixture of $W^{\tau(J)}$, respectively, for $J\in {[n-k]\choose k}$ and the random variable $U$ is defined in \eqref{eq:rv-U}. 
We set $\mathcal{L} \left( W^{(4)} \right)$ to be the mixture of $\mathcal{N}\left( \mu_{k}, \E\left[ \sigma_{U}^2 \right] \right)$ with the mixing law $k\sim \mathcal{L}(U)$. Then one can check that $\mu_{W^{(4)}} = \mu_{W^{(3)}} = \mu_{W^{(2)}} = \mu_{W^{(1)}} = \mu_{W}$  and $\sigma_{W^{(4)}} = \sigma_{W^{(3)}} = \sigma_{W^{(2)}} = \sigma_{W^{(1)}} = \sigma_{W}$. 

By Lemma \ref{lem:normaldistancekolwass}, we have
\begin{align} \label{eq:d_K(w3,w4)}
    d_K \left( \mathcal{L} ( \widehat{W^{(3)}} ), \mathcal{L} ( \widehat{W^{(4)}} ) \right)
    & \leq \E_{k\sim \mathcal{L}(U)} \left[ d_K \left( \mathcal{N}\left( \frac{\mu_{k} - \mu_{W}}{\sigma_{W}}, \frac{\sigma_{k}^2}{\sigma_{W}^2} \right) , \mathcal{N}\left( \frac{\mu_{k} - \mu_{W}}{\sigma_{W}}, \frac{\E [\sigma_{U}^2] }{\sigma_{W}^2} \right)  \right) \right] \nonumber \\
    & \leq \E_{k\sim \mathcal{L}(U)} \left[ \frac{\left|\sigma_{k}^2 - \E[\sigma_{U}^2]\right|}{ \E [\sigma_{U}^2] } \right] \leq \frac{ \sqrt{ \text{Var}(\sigma_U ^ 2) } }{ \E[\sigma_{U}^2] }.
\end{align}
It is known from Corollary \ref{coro:weightPolyaFrequence} that there exist independent Bernoulli random variables $\{Y_i\}_{0\leq i\leq \lfloor n/2 \rfloor}$ such that $U=\sum_{0\leq i\leq \lfloor n/2 \rfloor} Y_i$. Consider $F: \{0, 1\} ^ {\lfloor n/2 \rfloor } \to \R$ defined as
\begin{equation}\label{eq:def-F-a}
    F (y_1, \cdots, y_{\lfloor n/2 \rfloor}) 
    = \sigma_{y_1+\cdots+y_{\lfloor n/2\rfloor}}^2.
\end{equation}
Clearly, the distribution of $F(Y_1, \cdots, Y_{\lfloor n/2\rfloor})$ is identical to that of $\sigma_U ^ 2$. We will give an upper bound of the variance of $F(Y_1, \cdots, Y_{\lfloor n/2\rfloor})$ by the Efron-Stein inequality. By the law of total variance, one can check that
\begin{align}\label{eq:sigma_k^2-diff}
\sigma_k^2-\sigma_{k+1}^2 &=\left[\mathbb{E}_{J \sim \mathcal{U}\left(\binom{[n - k]}{k}\right)} \big[ \sigma_{\tau (J)} ^ 2 \big]-\mathbb{E}_{K \sim \left(\binom{[n - k-1]}{k+1}\right)} \big[ \sigma_{\tau (K)} ^ 2 \big]\right] \notag\\
&~~~+\left[\text{Var}_{J \sim \mathcal{U}\left(\binom{[n - k]}{k}\right)} (\mu_{\tau (J)})-\text{Var}_{K \sim \mathcal{U}\left(\binom{[n - k-1]}{k+1}\right)} (\mu_{\tau (K)})\right].
\end{align}

We first give an upper bound of the second summand. By \eqref{eq:varofmuj} and \eqref{eq:gi-mean-diff}, we have
\begin{align} \label{eq:variancemujdiff}
    &\ \ \ \ \left| \text{Var}_{J \sim \mathcal{U}\left(\binom{[n - k]}{k}\right)} (\mu_{\tau (J)})-\text{Var}_{K \sim \mathcal{U}\left(\binom{[n - k-1]}{k+1}\right)} (\mu_{\tau (K)}) \right|  \nonumber \\
    & = \left| \frac{ (\E [g_2] - \E [g_1]) ^ 2 }{ 12 } 
    \big( k (n - k + 1) (n - 2 k) - (k + 1) (n - k) (n - 2 k - 2) \big) \right|  \nonumber \\
    & = \left| \frac{ (\E [g_2] - \E [g_1]) ^ 2 }{ 12 } 
    \big(- n ^ 2 + 2 n + (6 n - 2) k - 6 k ^ 2\big) \right| \nonumber \\
    & \lesssim (\E [g_2] - \E [g_1]) ^ 2 n ^ 2  \lesssim \inf_{i \in [n - 1]} \text{Var} \left( g_i \right).
\end{align}

Now we give an upper bound of the first summand. For $J = \{j_1, \cdots, j_k\} \subseteq [n - 1]$ with $j_1<\cdots<j_k$ and $j_0$ such that $j_i < j_{0} < j_{i + 1}$ for $1 \leq i \leq k - 1$, we have $\tau (J) = \{j_1, j_2 + 1, \cdots, j_k + k - 1\}$ and $\tau (J \cup_{}^{} \{j_0\}) = \{j_1, \cdots, j_i + i - 1, j_0 + i, j_{i + 1} + i + 1, \cdots, j_k + k\}$. If $\tau (J), \tau (J \cup_{}^{} \{j_0\}) \subseteq [n - 1]$, then by \eqref{eq:varjdef}, \eqref{eq:gi-cov-diff}, \eqref{eq:gi-cov}, \eqref{eq:gi-var-diff} and Condition \ref{cond:NearIndependence}, we have
\begin{align*}
    &\ \ \ \ \left| \sigma_{\tau (J)} ^ 2 - \sigma_{\tau (J \cup \{j_{0}\})} ^ 2 \right| \\
    & \leq \left| \sum_{i \in \tau (J)}^{} \text{Var} \left( g_i \right) - \sum_{i \in \tau (J \cup \{j_0\})}^{} \text{Var} \left( g_i \right)  \right|
    + \left| a \right| (d_n + 1) \left| \sum_{i \in \tau (J \cup \{j_0\})}^{} \text{Cov} \left( g_i, X_i \right) \right| \\
    &\ \ \ \ +  \left| a \right|  (n - 1 - k) (d_n + 1) \left| \sum_{i \in \tau (J)}^{} \text{Cov} \left( g_i, X_i \right) - \sum_{i \in \tau (J \cup \{j_0\})}^{} \text{Cov} \left( g_i, X_i \right) \right|  \\
    &\ \ \ \ +a ^ 2 k \gamma_2 ^ 2 + a ^ 2 \left| k (4 n - 7) - 3 k ^ 2 - (n - 2) ^ 2 \right|  \frac{ (d_n + 1) ^ 2 }{ 4 } \gamma_2  \\
    & \lesssim  \left| \text{Var} \left( g_{j_0 + i} \right)  \right| + \sum_{l = i + 1}^{k} \left| \text{Var} \left( g_{j_l + l} \right) - \text{Var} \left( g_{j_l + l - 1} \right) \right| 
     + \left| a \right| d_n \sum_{i \in \tau (J \cup \{j_0\})}^{} \left| \text{Cov} \left( g_i, X_i \right) \right| + a ^ 2 n ^ 2 d_n ^ 4     \\
    &~~~+ \left| a \right| n d_n \left( \left| \text{Cov} \big( g_{j_0 + i}, X_{j_0 + i} \big) \right| + \sum_{l = i + 1}^{k} \left| \text{Cov} \big( g_{j_l + l}, X_{j_l + l} \big) - \text{Cov} \big( g_{j_l + l - 1}, X_{j_l + l - 1} \big) \right|  \right) \\
    & \lesssim \inf_{i \in [n - 1]} \text{Var} \left( g_i \right)
     + |a| n d_n ^ 2 \sqrt{ \inf_{i \in [n - 1]} \text{Var} \left( g_i \right) } + a ^ 2 n ^ 2 d_n ^ 4\\
    & \lesssim \inf_{i \in [n - 1]} \text{Var} \left( g_i \right).
\end{align*}
Similarly, one can show that the inequality above also holds in the cases $j_0 < j_1$ or $j_0 > j_k$. Hence, we showed that if $\tau (J), \tau (J \cup_{}^{} \{j_0\}) \subseteq [n - 1]$ with $j_{0} \not \in J$, then 
\begin{align}\label{eq:sigma_tau_J^2-diff}
    \left| \sigma_{\tau (J)} ^ 2 - \sigma_{\tau (J \cup \{j_{0}\})} ^ 2 \right| 
    \lesssim \inf_{i \in [n - 1]} \text{Var} \left( g_i \right).
\end{align}
Note that if we first select a set $J$ uniformly at random from $\binom{[n - k - 1]}{k - 1}$ and then select $j_0$ uniformly at random from $[n - k - 1] \setminus J$, then the resulting set $J \cup \{j_0\}$ will be uniformly distributed on $\binom{[n - k - 1]}{k}$. This observation and inequality \eqref{eq:sigma_tau_J^2-diff} yield
\begin{align} \label{eq:expectsigmamujdiff-a}
    &~~~~ \left|  \mathbb{E}_{J \sim \mathcal{U}\left(\binom{[n - k]}{k}\right)} \big[ \sigma_{\tau (J)} ^ 2 \big] - \mathbb{E}_{L \sim \mathcal{U}\left(\binom{[n - k - 1]}{k}\right)} \big[ \sigma_{\tau (L)} ^ 2 \big] \right| \notag\\
    & = \left| \frac{ n - 2 k }{ n - k } \mathbb{E}_{J \sim \mathcal{U}( \binom{[n - k - 1]}{k})} \big[ \sigma_{\tau (J)} ^ 2 \big] 
   \! +\! \frac{ k }{ n - k } \mathbb{E}_{J \sim \mathcal{U}( \binom{[n - k - 1]}{k - 1})} \big[ \sigma_{\tau (J \cup \{n - k\})} ^ 2 \big] 
    \!-\! \mathbb{E}_{L \sim \mathcal{U}( \binom{[n - k - 1]}{k})} \big[ \sigma_{\tau (L)} ^2 \big] \right| \notag\\
    & = \frac{ k }{ n - k } \left| \mathbb{E}_{J \sim \mathcal{U}\left(\binom{[n - k - 1]}{k - 1}\right)} \big[ \sigma_{\tau (J \cup \{n - k\})} ^ 2 \big] 
    - \mathbb{E}_{L \sim \mathcal{U}\left(\binom{[n - k - 1]}{k}\right)} \big[ \sigma_{\tau (L)} ^ 2 \big] \right| \notag\\
\begin{split}
    & = \frac{ k }{ n - k } \left\{ \left| \mathbb{E}_{J \sim \mathcal{U}\left(\binom{[n - k - 1]}{k - 1}\right)} \big[ \sigma_{\tau (J \cup \{n - k\})} ^ 2 \big] - \mathbb{E}_{J \sim \mathcal{U}\left(\binom{[n - k - 1]}{k - 1}\right)} \big[ \sigma_{\tau (J)} ^ 2 \big] \right| \right.\notag\\
    & ~~~~~~~~~~~~~~~~~~~\left. + \left| \mathbb{E}_{J \sim \mathcal{U}\left(\binom{[n - k - 1]}{k - 1}\right)} \big[ \sigma_{\tau (J)} ^ 2 \big] 
    - \mathbb{E}_{L \sim \mathcal{U}\left(\binom{[n - k - 1]}{k}\right)} \big[ \sigma_{\tau (L)} ^ 2 \big] \right| \right\}  \end{split} \notag\\
    & \lesssim \inf_{i \in [n - 1]} \text{Var} \left( g_i \right).
\end{align}
We treat the second absolute difference in the last identity via the observation. By a similar coupling, we can prove that
\begin{equation}\label{eq:expectsigmamujdiff-b}
    \left|  \mathbb{E}_{L \sim \mathcal{U}\left(\binom{[n - k - 1]}{k}\right)} \big[ \sigma_{\tau (L)} ^ 2 \big] - \mathbb{E}_{K \sim \mathcal{U}\left(\binom{[n - k - 1]}{k + 1}\right)} \big[ \sigma_{\tau (K)} ^ 2 \big] \right|
    \lesssim \inf_{i \in [n - 1]} \text{Var} \left( g_i \right).
\end{equation}
Hence, we apply inequalities \eqref{eq:expectsigmamujdiff-a} 
and \eqref{eq:expectsigmamujdiff-b} 
to obtain
\begin{align} \label{eq:expectsigmamujdiff}
    &~~~~\left| \mathbb{E}_{J \sim \mathcal{U}\left(\binom{[n - k]}{k}\right)} \big[ \sigma_{\tau (J)} ^ 2 \big] - \mathbb{E}_{K \sim \mathcal{U}\left(\binom{[n - k - 1]}{k + 1}\right)} \big[ \sigma_{\tau (K)} ^ 2 \big] \right|\notag\\
    & \leq \left|  \mathbb{E}_{J \sim \mathcal{U}\left(\binom{[n - k]}{k}\right)} \big[ \sigma_{\tau (J)} ^ 2 \big] - \mathbb{E}_{L \sim \mathcal{U}\left(\binom{[n - k - 1]}{k}\right)} \big[ \sigma_{\tau (L)} ^ 2 \big] \right|\notag\\
    &~~~+ \left|  \mathbb{E}_{L \sim \mathcal{U}\left(\binom{[n - k - 1]}{k}\right)} \big[ \sigma_{\tau (L)} ^ 2 \big] - \mathbb{E}_{K \sim \mathcal{U}\left(\binom{[n - k - 1]}{k + 1}\right)} \big[ \sigma_{\tau (K)} ^ 2 \big] \right| \nonumber \\
    & \lesssim \inf_{i \in [n - 1]} \text{Var} \left( g_i \right).
\end{align}
Combine inequalities \eqref{eq:sigma_k^2-diff}, \eqref{eq:variancemujdiff} and \eqref{eq:expectsigmamujdiff} to obtain
$$
\left| \sigma_{k} ^ 2 - \sigma_{k + 1} ^ 2 \right|\lesssim \inf_{i \in [n - 1]} \text{Var} \left( g_i \right).
$$
By the definition of $F$ in \eqref{eq:def-F-a}, we have for $y_1, \cdots, y_{i - 1}, y_i, y_i', y_{i + 1}, \cdots, y_{\lfloor n/2 \rfloor } \in \{0, 1\}$ that
\begin{align*}
    \left| F \left( y_1, \cdots, y_i,\cdots, y_{\lfloor n/2 \rfloor } \right) 
    - F \left( y_1, \cdots, y_i', \cdots, y_{\lfloor n/2 \rfloor } \right)  \right| \lesssim \inf_{i \in [n - 1]} \text{Var} \left( g_i \right).
\end{align*}
Then we apply Proposition \ref{prop:BoundDiffConcentra} to obtain 
$$
\text{Var}(\sigma_U^2)=\text{Var}(F(Y_1, \cdots, Y_{\lfloor n/2\rfloor}))\lesssim n\left(\inf_{i \in [n - 1]} \text{Var} ( g_i )\right)^2.
$$
Combine the previous inequality with  Corollary \ref{coro:weightPolyaFrequence}, \eqref{eq:d_K(w3,w4)} and \eqref{eq:sigmakmeanlowerbound} to obtain 
\begin{align} \label{eq:d_K(W3,W4)}
    d_K \left( \mathcal{L} ( \widehat{W^{(3)}} ), \mathcal{L} ( \widehat{W^{(4)}}) \right) 
    \leq \frac{ \sqrt{ \text{Var} \left( \sigma_U ^ 2 \right) }  }{ \E \left[ \sigma_{U}^2 \right] } \lesssim \frac{ \sqrt{ n } \inf_{i \in [n - 1]} \text{Var} \left( g_i \right) }{ n\inf_{i \in [n - 1]} \text{Var} \left( g_i \right) } \lesssim \frac{ 1 }{ \sqrt{ n } }.
\end{align}
Similarly, apply Lemma \ref{lem:normaldistancekolwass} and inequalities \eqref{eq:sigmakmeanlowerbound} and \eqref{eq:d_K(W3,W4)} to obtain
\begin{align} \label{eq:d_W(W3,W4)}
    d_W \left( \mathcal{L} ( \widehat{W ^ {(3)}} ), \mathcal{L} ( \widehat{W ^ {(4)}} ) \right) 
    &\leq \E_{k\sim \mathcal{L}(U)} \left[\frac{\left| \sigma_{k}^2 - \E\left[ \sigma_{U}^2 \right]  \right| }{\sigma_{W} \sqrt{ \E\left[ \sigma_{U}^2 \right]  }}  \right]\notag\\
    &\leq \E_{k\sim \mathcal{L}(U)} \left[ \frac{\left| \sigma_{k}^2 - \E\left[ \sigma_{U}^2 \right]  \right| }{ \E \left[ \sigma_{U}^2 \right]  }  \right]\leq \frac{ \sqrt{ \text{Var} \left( \sigma_U ^ 2 \right) }  }{ \E \left[ \sigma_{U}^2 \right] } \lesssim \frac{1}{\sqrt{ n }}.
\end{align}

\textbf { Step 5. Bounding} $\bm{d_{W / K} \big( \mathcal{L} ( \widehat{W^{(4)}}), \mathcal{N}(0, 1)\big)}$\textbf{.}
We first recall that $\mathcal{L} \left( W^{(4)} \right)$ is the mixture of $\mathcal{N}\left( \mu_{k}, \E\left[ \sigma_{U}^2 \right] \right)$ with the mixing law $k\sim \mathcal{L}(U)$. 
By definition, we have
\begin{align} \label{eq:mukcompute}
    \mu_k &= {n-k\choose k}^{-1}\sum_{J :  |J|=k}\mu_{\tau (J)}\notag\\
    & = (\E [g_2] - \E [g_1]) \left( \frac{ k (n - k + 1) }{ 2 } + \frac{ k (k - 3) }{ 2 } \right) 
    + k \E [g_1]\notag\\  
    &~~~+ a \binom{n - 1 - k}{2} \frac{ (d_n + 1) ^ 2 }{ 4 } 
     - a \binom{n - 1}{2} \frac{ (d_n + 1) ^ 2 }{ 4 } \nonumber \\
    & = \left( \frac{ n - 2 }{ 2 } (\E [g_2] - \E [g_1]) + \E [g_1] - \frac{ a (d_n + 1) ^ 2 (2 n - 3) }{ 8 } \right) k 
    + \frac{ a (d_n + 1) ^ 2 }{ 8 } k ^ 2 .
\end{align}
Write $A = \frac{ n - 2 }{ 2 } (\E [g_2] - \E [g_1]) + \E [g_1] - \frac{ a (d_n + 1) ^ 2 (2 n - 3) }{ 8 }$ and $B = \frac{ a (d_n + 1) ^ 2 }{ 8 }$. Set $V=\mu_U=AU+BU^2$. By Lemma \ref{lem:mixdis}, we have
\begin{align} \label{eq:d_W/K(W4,N)}
    d_{W / K} \left( \mathcal{L} ( \widehat{W^{(4)}} ), \mathcal{N}\left( 0, 1 \right)  \right) 
    & \lesssim \frac{ \sigma_{\mu_U} }{ \sqrt{ \E [ \sigma_U ^ 2 ]  } } d_W \left( \mathcal{L} \left( \widehat{\mu_{U}} \right), \mathcal{N}\left( 0, 1 \right)  \right) \notag\\
     & = \frac{ \sigma_{V} }{ \sqrt{ \E\left[ \sigma_U ^ 2 \right] }  } d_W \left( \mathcal{L} ( \widehat{V} ), \mathcal{N}\left( 0, 1 \right)  \right) ,
\end{align}
As shown in Corollary \ref{coro:weightPolyaFrequence}, there exist independent Bernoulli random variables $Y_1, \cdots, Y_{\lfloor n/2\rfloor }$ such that $U = \sum_{1 \leq i \leq \lfloor n/2 \rfloor } Y_i$. Write $p_i=\E[Y_i]>0$ and $Y_i=Z_i+p_i$. Then $Z_i$ has mean 0 and $|Z_i|\leq 1$. We have the following representation
\begin{align} \label{eq:hoeffdecompOfV}
    V - \E [V] & = A U + B U ^ 2 - \E [V] \nonumber \\
    & = A \sum_{1 \leq i \leq \lfloor n/2 \rfloor } Y_i + B \left( \sum_{1 \leq i \leq \lfloor n/2 \rfloor } Y_i \right)  ^ 2 - \E [V] \nonumber \\
    & = (A + B) \sum_{1 \leq i \leq \lfloor n/2 \rfloor } Y_i + 2 B \sum_{1\leq i < j\leq \lfloor n/2 \rfloor} Y_i Y_j - \E [V] \nonumber \\
    & = \sum_{1 \leq i \leq \lfloor n/2 \rfloor} \left( A + B + 2 B \E [U] - 2 B p_i \right) Z_i + 2 B \sum_{1 \leq i <j\leq \lfloor n/2 \rfloor} Z_i Z_j.
\end{align}
Write $V_i = \left( A + B + 2 B \E [U] - 2 B p_i \right) Z_i$ and $V_{\{i, j\}} = 2 B Z_i Z_j$. Then we have the Hoeffding decomposition of $V - \E [V]$ as
$$
V - \E [V] = \sum_{1 \leq i \leq \lfloor n/2 \rfloor} V_i + \sum_{1 \leq i <j\leq \lfloor n/2 \rfloor}^{} V_{\{i, j\}}.
$$
By \eqref{eq:hoeffdecompOfV}, we have
\begin{align*}
    \text{Var} \left( V \right) 
    = \sum_{1 \leq i \leq \lfloor n/2 \rfloor}^{} \left( A + B + 2 B \E [U] - 2 Bp_i \right) ^ 2 \E [Z_i ^ 2] + 4 B ^ 2 \sum_{1 \leq i <j\leq \lfloor n/2 \rfloor}^{} \E [Z_i ^ 2] \E [Z_j ^ 2] > 0.
\end{align*}
By Proposition \ref{prop:hoePrivault} (the case $d = 2$) , we have
\begin{align}\label{eq:d_W(V, N)}
d_W \left( \mathcal{L} ( \widehat{V} ), \mathcal{N}\left( 0, 1 \right)  \right)\lesssim \frac{\sqrt{D+E+F}}{\text{Var}(V)},
\end{align}
where the quantities $D, E, F$ are listed and estimated below.

\begin{align}\label{eq:term-D}
    D :&= \sum_{0\leq l<i\leq 2}^{} \sum_{\left| J \right| = i - l }^{} \E \left[ \left( \sum_{\left| K \right| = l, K \cap J = \varnothing }^{} \mathbb{E} \left[ (V_{J \cup K}) ^ 2 | \mathcal{F}_J \right] \right) ^ 2 \right] \notag\\
    & = \sum_{i = 1, 2} \sum_{\left| J \right| = i }^{} \mathbb{E} \left[ V_J ^ 4 \right] + \sum_{1\leq j\leq \lfloor n/2\rfloor}
    \mathbb{E} \left[ \left( 4 B ^ 2\sum_{k \neq j}^{}  \E [Z_k ^ 2] Z_j ^ 2 \right) ^ 2 \right] \notag\\
    & = \sum_{1\leq i\leq \lfloor n/2\rfloor} \big( A + B + 2 B \E [U] - 2 B p_i \big) ^ 4 \E \left[ Z_i ^ 4 \right] \notag\\
    &\ \ \ \ + 16 B ^ 4 \sum_{1\leq i < j\leq \lfloor n/2\rfloor} \E \left[ Z_i ^ 4 \right] \E \left[ Z_j ^ 4 \right] + 16 B ^ 4 \sum_{1\leq j\leq \lfloor n/2\rfloor}\E [Z_j ^ 4] \left( \sum_{k \neq j}^{} \E [Z_k ^ 2] \right) ^ 2 \notag\\
    & \lesssim \left( |A| + (n + 1) |B| \right) ^ 2 \sum_{1\leq i\leq \lfloor n/2\rfloor}^{} \left( A + B + 2 B \E [U] - 2 B p_i \right) ^ 2 \E \left[ Z_i ^ 2 \right] \notag\\
    &~~~+ n B ^ 4 \sum_{1\leq i < j\leq \lfloor n/2\rfloor} \E [Z_i ^ 2] \E [Z_j ^ 2] \notag\\
    & \lesssim \left( |A| + (n + 1) |B| \right) ^ 2 \text{Var} \left( V \right).
\end{align}
In the first inequality, we use the fact that $|Z_i|\leq 1$. Also,
\begin{align}\label{eq:term-E}
    E :&= \sum_{1\leq l<i\leq 2}^{} \sum_{\substack{|J_1| = |J_2| = i - l \\ J_1 \cap J_2 = \varnothing} }^{} \mathbb{E} \left[ \left( \sum_{\substack{|K| = l\\ K \cap (J_1 \cup J_2) = \varnothing }}^{} \E [V_{J_1 \cup K } V_{J_2 \cup K} | \mathcal{F}_{J_1 \cup J_2}] \right) ^ 2 \right] \notag\\
    & = \sum_{j_1 \neq j_2}^{}
    \mathbb{E} \left[ \left( 4 B ^ 2\sum_{k \neq j_1, j_2}^{} \E [Z_k ^ 2] Z_{j_1} Z_{j_2} \right) ^ 2 \right] \notag\\
    & = 16 B ^ 4 \sum_{j_1 \neq j_2}^{} \E \left[ Z_{j_1} ^ 2 \right] \E \left[ Z_{j_2} ^ 2 \right] \left( \sum_{k \neq j_1, j_2}^{} \E [Z_k ^ 2] \right) ^ 2 \notag\\
    & \lesssim n ^ 2 B ^ 4 \sum_{i < j}^{} \E [Z_i ^ 2] \E [Z_j ^ 2] \notag\\
    & \lesssim n ^ 2 B ^ 2 \text{Var} \left( V \right).
\end{align}And,
\begin{align}\label{eq:term-F}
    F :&= \sum_{1\leq l<i\leq 2}^{} \sum_{|J| = i - l}^{} \mathbb{E} \left[ \left( \sum_{|K| = l, K \cap J = \varnothing }^{} \E [V_K V_{J \cup K} | \mathcal{F}_J] \right) ^ 2 \right] \notag\\
    & = \sum_{1\leq j\leq \lfloor n/2\rfloor} 
    \mathbb{E} \left[ \left( 2 B\sum_{k \neq j}^{}  Z_j \E [V_k Z_k] \right) ^ 2 \right] \notag\\
    & = 4 B ^ 2 \sum_{1\leq j\leq \lfloor n/2\rfloor}^{} \E \left[ Z_j ^ 2 \right] \left( \sum_{k \neq j}^{} \E [V_k Z_k] \right) ^ 2 \notag\\
    & = 4 B ^ 2 \sum_{1\leq j\leq \lfloor n/2\rfloor}^{} \E \left[ Z_j ^ 2 \right] \left( \sum_{k \neq j} \left( A + B + 2 B \E [U] - 2 B p_k \right) \E [Z_k ^ 2] \right) ^ 2 \notag\\
    & \lesssim n ^ 2 B ^ 2 \sum_{1\leq i\leq \lfloor n/2\rfloor}^{} \left( A + B + 2 B \E [U] - 2 B p_k \right) ^ 2 \left(\E [Z_i ^ 2] \right)^ 2 \notag\\
    & \leq n ^ 2 B ^ 2 \sum_{1\leq i\leq \lfloor n/2\rfloor}^{} \left( A + B + 2 B \E [U] - 2 B p_k \right) ^ 2 \E [Z_i ^ 2] \notag\\
    & \leq n ^ 2 B ^ 2 \text{Var} \left( V \right).
\end{align}
Plug bounds \eqref{eq:term-D}, \eqref{eq:term-E}, \eqref{eq:term-F} into \eqref{eq:d_W(V, N)} and apply Condition \ref{cond:NearIndependence} to obtain
\begin{align} \label{eq:d_W(V, N)-a}
    d_{W} \left( \mathcal{L} ( \widehat{V} ), \mathcal{N}\left( 0, 1 \right)  \right) 
   & \lesssim \frac{ \sqrt{ (|A| + (n + 1) |B|) ^ 2 \text{Var} \left( V \right) }  }{ \text{Var} \left( V \right) } \notag\\
   &=\frac{|A| + (n + 1) |B|}{\sigma_V}\lesssim \frac{ \sqrt{ \inf_{i \in [n - 1]} \text{Var} \left( g_i \right) } }{ \sigma_V }.
\end{align}
We use the estimates that $|A|\lesssim |a|nd_n^2+\sqrt{\inf_{i\in[n-1]}\text{Var}(g_i)}$, which follow from \eqref{eq:gi-mean} and \eqref{eq:gi-mean-diff}, and $|B|\lesssim |a|d_n^2$.
Combine \eqref{eq:d_W(V, N)-a} with \eqref{eq:d_W/K(W4,N)} and \eqref{eq:sigmakmeanlowerbound} to obtain
\begin{align} \label{eq:d_W/K(W4,N)final1}
    d_{W / K} \left( \mathcal{L} \left( \widehat{W ^ {(4)}} \right), \mathcal{N}\left( 0, 1 \right)  \right) 
    \lesssim \sqrt{ \frac{ \inf_{i \in [n - 1]} \text{Var} \left( g_i \right) }{ \E_k \left[ \sigma_k ^ 2 \right]  } }
    \lesssim \frac{ 1 }{ \sqrt{ n } }
\end{align}
for sufficiently large $n$.
Putting together inequalities \eqref{eq:d_W/K(W,W1)}, \eqref{eq:d_K(W1,W2)}, \eqref{eq:d_W(W1,W2)}, \eqref{eq:d_K(W2,W3)}, \eqref{eq:d_W(W2,W3)}, \eqref{eq:d_K(W3,W4)}, \eqref{eq:d_W(W3,W4)}, \eqref{eq:d_W/K(W4,N)final1}, we have
\begin{align*}
    d_{W/K} \left( \mathcal{L} ( \widehat{W} ), \mathcal{N}\left( 0, 1 \right)  \right) \leq \sum_{i=0}^4 d_{W/K} \left( \mathcal{L} ( \widehat{W^{(i)}} ), \mathcal{L} ( \widehat{W^{(i+1)}} )\right)\lesssim \frac{ 1 }{ \sqrt{ n } }.
\end{align*}
This completes the proof of Theorem \ref{thm:normalgeneral}.

\end{proof}

\subsection{Proofs of Theorem \ref{thm:AsymptoticNormalityCorePartition}
and Corollary \ref{cor:1}}

In this subsection, we provide proofs of Theorem \ref{thm:AsymptoticNormalityCorePartition}
and Corollary \ref{cor:1}.

\begin{proof}[Proof of Theorem \ref{thm:AsymptoticNormalityCorePartition}]
We will prove the asymptotic normality of $\{L_{n, i}\}_{i=1}^3$ and $\{S_{n, i}\}_{i=1}^3$ by showing that the corresponding assumptions in Theorem \ref{thm:normalgeneral} are satisfied.

\textbf{Asymptotic normality of $\bm{S_{n, 1}}$ and $\bm{S_{n, 2}}$}. Owing to the representations of $S_{n, 1}$ and $S_{n, 2}$ in Lemma \ref{lem:ls}, we define the function $f: \mathbb{R}^{n-1}\rightarrow \mathbb{R}$ as below
\begin{align*}
    &\ \ \ \ f (x_1, \cdots, x_{n - 1}) \\
    & = \sum_{i = 1}^{n - 1} \left( \frac{ n - 1 }{ 2 } \left( x_i + \frac{ d_n + 1 }{ 2 } \right)  ^ 2 + \left( i - \frac{ n - 1 }{ 2 } \right) \left( x_i + \frac{ d_n + 1 }{ 2 } \right)  \right) \\
    &\ \ \ \ - \sum_{i < j}^{} \left( x_i + \frac{ d_n + 1 }{ 2 } \right) \left( x_j + \frac{ d_n + 1 }{ 2 } \right)  \\
    & = \sum_{i = 1}^{n - 1} \left( \frac{ n - 1 }{ 2 } \left( x_i + \frac{ d_n + 1 }{ 2 } \right) ^ 2 + \left( i - \frac{ n - 1 }{ 2 } - \frac{ (n - 2) (d_n + 1) }{ 2 } \right) \left( x_i + \frac{ d_n + 1 }{ 2 } \right) \right) \\
    &\ \ \ \ - \sum_{i < j}^{} x_i x_j + \frac{ (n - 1) (n - 2) (d_n + 1) ^ 2 }{ 8 }. 
\end{align*}
In this case, we have $a = - 1$ and the function 
$g_i: \mathbb{R}\rightarrow \mathbb{R}$ is given by $$
g_i (x) = \frac{ n - 1 }{ 2 } \left( x + \frac{ d_n + 1 }{ 2 } \right) ^ 2 + \left( i - \frac{ n - 1 }{ 2 } - \frac{ (n - 2) (d_n + 1) }{ 2 } \right) \left( x + \frac{ d_n + 1 }{ 2 } \right).
$$ 
\emph{Assume that $d_n \ge 3$.} Obviously, $g_i$ has root $- \frac{ d_n + 1 }{ 2 }$ and $f \left( - \frac{ d_n + 1 }{ 2 }, \cdots, - \frac{ d_n + 1 }{ 2 } \right) = 0$. It is also clear that for any given $x\in \mathbb{R}$ the sequence $\{g_i(x)\}_{1\leq i \leq n-1}$ is an arithmetic progression. Hence, Condition \ref{cond:ArithmeticProgression} holds. 

Let $X \sim \mathcal{U} \left( \left\{ - \frac{ d_n - 1 }{ 2 }, - \frac{ d_n - 3 }{ 2 }, \cdots, \frac{ d_n - 1 }{ 2 } \right\} \right)$. When $n \ge 2$ and $d_n \ge 3$, one can check for all $i \in [n -1]$ that 
\begin{align*}
    \text{Var} \left( g_i (X) \right)
    & = \text{Var} \left( \frac{ n - 1 }{ 2 } X ^ 2 + \left( i + 1 + \frac{ d_n - n }{ 2 } \right) X \right) \\
    & = \frac{ (n - 1) ^ 2 }{ 4 } \text{Var} \left( X ^ 2 \right)
    + \left( i + 1 + \frac{ d_n - n }{ 2 } \right) ^ 2 \E [X ^ 2] \\
    & \ge \frac{ (n - 1) ^ 2 }{ 4 } \text{Var} \left( X ^ 2 \right) \\
    & = \frac{ (n - 1) ^ 2 (d_n ^ 2 - 1) (d_n ^ 2 - 4) }{ 720 } \\
    & \gtrsim n ^ 2 d_n ^ 4
\end{align*}
Hence, Condition \ref{cond:NearIndependence} is satisfied. 

Moreover, there exists an absolute constant $C$ such that for all $i \in [n - 1]$ and $x \in \left\{ - \frac{ d_n - 1 }{ 2 }, - \frac{ d_n - 3 }{ 2 }, \cdots, \frac{ d_n - 1 }{ 2 } \right\}$ it holds that
\begin{align*}
    g_i ^ 2 (x) 
    & \leq \left( n d_n ^ 2 + \left( \frac{ n - 1 }{ 2 } + (n - 2) d_n\right) d_n \right)  ^ 2  \lesssim n ^ 2 d_n ^ 4  < C \inf_{j \in [n - 1]} \text{Var} \left( g_j \right)
\end{align*}
Hence, Condition \ref{cond:Boundedness} is satisfied. 

One can check for $i \in [n - 1]$ that
\begin{align*}
    \text{Cov} \left( g_i (X), X \right) ^ 2
    & = \left( i + 1 + \frac{ d_n - n }{ 2 } \right) ^ 2 (\E [X ^ 2] )^ 2  \lesssim \E [X ^ 2] n ^ 2 d_n ^ 4 \\
    & \lesssim \frac{ (n - 1) ^ 2 }{ 4 } \text{Var} \left( X ^ 2 \right) \E [X ^ 2].
\end{align*}So there exists a universal constant $C \in (0, 1)$ such that
\begin{align*}
    \text{Cov} \left( g_i (X), X \right) ^ 2
    & = \left( i + 1 + \frac{ d_n - n }{ 2 } \right) ^ 2 (\E [X ^ 2]) ^ 2 \\
    & \leq C \left( \frac{ (n - 1) ^ 2 }{ 4 } \text{Var} \left( X ^ 2 \right) \E [X ^ 2] + \left( i + 1 + \frac{ d_n - n }{ 2 } \right) ^ 2 (\E [X ^ 2]) ^ 2 \right) \\
    & = C \text{Var} \left( X \right) \text{Var} \left( g_i(X) \right).
\end{align*}
Hence, Condition \ref{cond:Nondegenerate} is satisfied. 

We set $W = f \left( X_1, \cdots, X_{n - 1} \right) $, where $\left( X_1 + \frac{ d_n + 1 }{ 2 }, \cdots, X_{n - 1} + \frac{ d_n + 1 }{ 2 } \right) $ is chosen uniformly at random from $\mathcal{SB}_n$. It is clear that $S_{n, 2}$ and $W$ are identically distributed. Hence, by Theorem \ref{thm:normalgeneral}, we have
\begin{align*}
    d_{W / K} \left( \mathcal{L} ( \widehat{S_{n, 2}} ), \mathcal{N}( 0, 1 )  \right)=d_{W / K} \left( \mathcal{L} ( \widehat{W} ), \mathcal{N}\left( 0, 1 \right)  \right) \lesssim \frac{ 1 }{ \sqrt{ n } }.
\end{align*}

Similar to Step 1 
in the proof of Theorem \ref{thm:normalgeneral}, we can prove that when $d_n \ge 1$ and $n$ sufficiently large,
\begin{align*}
    d_{W / K} \left( \mathcal{L} ( \widehat{S_{n, 1}} ), \mathcal{N}\left( 0, 1 \right)  \right) \lesssim \frac{ 1 }{ \sqrt{ n } }.
\end{align*}

\textbf{Asymptotic normality of $\bm{L_{n, 1}}$ and $\bm{L_{n, 2}}$}. In this case, we define $f (x_1, \cdots, x_{n - 1}) = \sum_{i = 1}^{n - 1} \left( x_i + \frac{ d_n + 1 }{ 2 } \right)$. Hence, we have $a = 0$ and $g_i (x) = x + \frac{ d_n + 1 }{ 2 }$ for $1 \leq i \leq n - 1$. In this case, one can easily check that all the assumptions in Theorem \ref{thm:normalgeneral} are satisfied. When $d_n \ge 1$, it is a classical result that
\begin{align*}
    d_{W / K} \left( \mathcal{L} ( \widehat{L_{n, 1}} ), \mathcal{N}\left( 0, 1 \right)  \right) \lesssim \frac{ 1 }{ \sqrt{ n } }.
\end{align*}
When $d_n \ge 2$, Theorem \ref{thm:normalgeneral} yields
\begin{align*}
    d_{W / K} \left( \mathcal{L} ( \widehat{L_{n, 2}} ), \mathcal{N}\left( 0, 1 \right)  \right) \lesssim \frac{ 1 }{ \sqrt{ n } }.
\end{align*}
When $d_n = 1$, we have $L_{n, 2} = U$, whose distribution is given in Section \ref{sec:PolyaFrequence}. See the definition of $U$ in Section \ref{sec:PolyaFrequence}. It was showed in \cite{KST20} that
\begin{align}
    \text{Var} \left( U \right) = \frac{ \sqrt{ 5 } }{ 25 } n + O (1).
\end{align}
Hence, when $d_n = 1$, we apply Corollary \ref{coro:weightPolyaFrequence} and Proposition \ref{prop:hoePrivault} to obtain
\begin{align*}
    d_{W / K} \left( \mathcal{L} ( \widehat{L_{n, 2}} ), \mathcal{N}\left( 0, 1 \right)  \right) \lesssim \frac{ 1 }{ \sqrt{ n } }.
\end{align*}

\textbf{Asymptotic normality of $\bm{L_{n, 3}}$ and $\bm{S_{n, 3}}$}. The characterization of $S_{n, 3}$ is given in Lemma \ref{lem:SelfConjugateDistribution}. We assume that $(X_1, \cdots, X_n)$ is uniformly distributed on $\mathcal{MD}_n$ and note that $\{(X_i, X_{n + 1 - i})\}_{1 \leq i < \frac{ n + 1 }{ 2 }}$ are independent. For $1 \leq i \leq n, i \neq \frac{ n + 1 }{ 2 }$, we define $h_i (x) = n x ^ 2 + (2 i - n - 1) x$ and $S_{n, 3, i} = h_i (X_i) + h_{n + 1 - i} (X_{n + 1 - i}) -\E [h_i] - \E [h_{n + 1 - i}]$. Then we have
\begin{align*}
    S_{n, 3} - \E [S_{n, 3}] = \sum_{1 \leq i < \frac{ n + 1 }{ 2 }}^{} S_{n, 3, i},
\end{align*}
which is the Hoeffding decomposition of $S_{n, 3} - \E [S_{n, 3}]$. For $1 \leq i \leq n, i \neq \frac{ n + 1 }{ 2 }$, we have $|h_i (x)| \leq 3 n e_n ^ 2$, where $e_n=E_n/(2n)$ and $E_n$ is the perimeter of a random self-conjugate $n$-core partition (see Section \ref{subsec:self-conjugate}). By Proposition \ref{prop:hoePrivault}, we have
\begin{align} \label{eq:SelfConjuNormalDistance}
    d_{W / K} \left( \mathcal{L} ( \widehat{S_{n, 3}} ), \mathcal{N}\left( 0, 1 \right)  \right) \lesssim 
    \frac{ \sqrt{ \sum_{1 \leq i < \frac{ n + 1 }{ 2 }}^{} \mathbb{E} \big[ S_{n, 3, i} ^ 4 \big] } }{ \sum_{1 \leq i < \frac{ n + 1 }{ 2 }}^{} \mathbb{E} \big[ S_{n, 3, i} ^ 2 \big] }
    \lesssim \frac{ n e_n ^ 2 }{ \sqrt{ \sum_{1 \leq i < \frac{ n + 1 }{ 2 }}^{} \mathbb{E} \big[ S_{n, 3, i} ^ 2 \big] } }.
\end{align}
Now we give a lower bound of $\mathbb{E} \big[ S_{n, 3, i} ^ 2 \big] = \text{Var} \left( h_i + h_{n + 1 - i} \right)$ for $1 \leq i < \frac{ n + 1 }{ 2 }$. By the definition of $\mathcal{MD}_n$, $(X_i, X_{n + 1 - i})$ is chosen uniformly at random from $\{(0, 0)\} \cup_{}^{} \{(0, 1), \cdots, (0, e_n)\} \cup_{}^{} \{(1, 0), \cdots, (e_n, 0)\}$. We can compute that when $e_n \ge 2$,
\begin{align*}
    \text{Var}_{(X_i, X_{n + 1 - i}) \sim \mathcal{U} \left( \{(0, 1), \cdots, (0, e_n)\} \right) } \left( h_i + h_{n + 1 - i} \right) 
    & = \text{Var}_{X_{n + 1 - i} \sim \mathcal{U} \left( [e_n] \right) } \left( h_{n + 1 - i} \right) \\
    & \ge n ^ 2 e_n ^ 2 \text{Var} \left( X_{n + 1 - i} \right) \\
    & \gtrsim n ^ 2 e_n ^ 4.
\end{align*}
Similarly, we have
$$
\text{Var}_{(X_i, X_{n + 1 - i}) \sim \mathcal{U} \left( \{(1, 0), \cdots, (e_n, 0)\} \right) } \left( h_i + h_{n + 1 - i} \right) \gtrsim n ^ 2 e_n ^ 4.
$$
By the law of total variance, when $e_n \ge 2$,
\begin{align*}
    \mathbb{E} \big[ S_{n, 3, i} ^ 2 \big] &=\text{Var} \left( h_i + h_{n + 1 - i} \right)\notag\\
    & \ge \frac{ e_n }{ 2 e_n + 1 }
    \left(  \text{Var}_{X_{n + 1 - i} \sim \mathcal{U} \left( [e_n] \right) } \left( h_{n + 1 - i} \right)
    +  \text{Var}_{X_{i} \sim \mathcal{U} \left( [e_n] \right) } \left( h_{i} \right) \right) \gtrsim n ^ 2 e_n ^ 4.
\end{align*}
This is also true when $e_n = 1$. Plugging this bound into \eqref{eq:SelfConjuNormalDistance}, we obtain
\begin{align*}
    d_{W / K} \left( \mathcal{L} ( \widehat{S_{n, 3}} ), \mathcal{N}\left( 0, 1 \right)  \right) \lesssim \frac{ 1 }{ \sqrt{ n } }.
\end{align*}
Similarly, we can prove that when $e_n \ge 1$,
\begin{align*}
    d_{W / K} \left( \mathcal{L} ( \widehat{L_{n, 3}} ), \mathcal{N}\left( 0, 1 \right)  \right) \lesssim \frac{ 1 }{ \sqrt{ n } }.
\end{align*}
\end{proof}


\begin{proof}[Proof of Corollary \ref{cor:1}]

Let $D_n = dn$, then $d_n = d$ is a constant. It's obvious that $\lambda$ is a strict $(n, dn + 1)$-core partition if and only if $\lambda$ is a strict $n$-core partition with perimeter at most $dn$.
\end{proof}

\subsection{Proof of Theorem \ref{thm:concengeneral}}

In this subsection, we provide the proof of Theorem \ref{thm:concengeneral}.
We first introduce some notations. For a nonempty set $J \subseteq [n - 1]$, we define the set $J ^ {(n)}$ as
$$
J ^ {(n)} = \big\{(x_1, \cdots, x_{n - 1}) \ : \ x_i = 0 \ \text{for} \ i \not \in J \ \text{and} \ x_i \in [d_n] \ \text{for} \ i \in J \big\}.
$$
It is clear that $\left\{J^{(n)}: J\subseteq [n-1]\right\}$ are disjoint. For $0 \leq k \leq \lfloor n/2 \rfloor$, we define 
\begin{equation}\label{eq:sb_n_k}
\mathcal{SB}_{n, k} = \cup_{J \in \binom{[n - k]}{k}}^{} \tau (J) ^ {(n)}.
\end{equation}
Then we have the partition of $\mathcal{SB}_{n}$ as follows
\begin{equation}\label{eq:sb_n}
\mathcal{SB}_{n}=\cup_{0\leq k\leq \lfloor n/2\rfloor} \mathcal{SB}_{n, k}.
\end{equation}
Let $\mu_n$ and $\mu_{n, k}$ be the uniform distributions on $\mathcal{SB}_{n}$ and $\mathcal{SB}_{n, k}$, respectively. For $J \in \binom{[n - k]}{k}$, we denote by $\mu_{n, k, \tau(J)}$ the uniform distributions on $\tau(J)^{(n)}$. It can be seen from \eqref{eq:sb_n} and \eqref{eq:sb_n_k} that $\mu_n$ is the mixture of $\mu_{n, k}$ with weight ${n-k\choose k}d_n^k/|\mathcal{SB}_n|$ and that $\mu_{n, k}$ is the uniform mixture of $\mu_{n, k, \tau(J)}$.

Let $f$ be a function satisfying both assumptions in  Theorem \ref{thm:concengeneral}. Let $X, X_k$ and $X_{k, \tau(J)}$ be random variables with distributions $\mu_n, \mu_{n, k}$ and $ \mu_{n, k, \tau(J)}$, respectively. For $\lambda\in \mathbb{R}$, we define the logarithmic moment generating functions
\begin{align}
\alpha_n (\lambda) &= \log_{} \E\left[ e ^ {\lambda \left( f (X) - \E [f (X)] \right) } \right], \label{eq:alpha_n}\\ 
\alpha_{n, k} (\lambda) &= \log_{} \E\left[ e ^ {\lambda \left( f (X_k) - \E [f (X_k)] \right) } \right], \label{eq:alpha_n_k}\\
\alpha_{n, k, \tau(J)} (\lambda) &= \log_{} \E\left[ e ^ {\lambda \left( f (X_{k, \tau(J)}) - \E [f (X_{k, \tau(J)})] \right) } \right] \label{eq:alpha_n_k_J}.
\end{align}

Our proof of Theorem \ref{thm:concengeneral} proceeds as follows. Owing to Proposition \ref{prop:subgaudef}, the concentration inequality \eqref{eq:concensbn} follows from an upper bound of $\alpha_n (\lambda)$, which can be established, via Lemma \ref{lem:mixcon}, by upper-bounding $\alpha_{n, k} (\lambda)$, since the distribution of $f(X)$ can be written as a mixture of that of $f(X_{k})$. Similarly, the upper bound of $\alpha_{n, k} (\lambda)$ follows, again via Lemma \ref{lem:mixcon}, from estimates of $\alpha_{n, k, \tau(J)} (\lambda)$, since the distribution of $f(X_{k})$ is the uniform mixture of that of $f(X_{k, \tau(J)})$.

\begin{proof}[Proof of Theorem \ref{thm:concengeneral}] In the following, we sequentially establish upper bounds of $\alpha_{n, k, \tau(J)} (\lambda)$, $ \alpha_{n, k} (\lambda)$ and $ \alpha_n (\lambda)$.

\textbf{Upper bound of} $\bm{\alpha_{n, k, \tau(J)} (\lambda)}$. For any $0< k\leq \lfloor n/2\rfloor$ and $J\in {[n-k] \choose k}$, we apply
Proposition \ref{prop:boundiff} and Condition \ref{cond:BoundDiff} to obtain for any $r\geq 0$ that
\begin{equation*}
    \mathbb{P} \left( \left| f (X_{k, \tau(J)}) - \E [f (X_{k, \tau(J)})] \right| \ge r \right)
    \leq 2 \exp \left( - \frac{ 2 r ^ 2 }{ k C_1 ^ 2 } \right).
\end{equation*}
Together with Proposition \ref{prop:subgaudef}, the deviation inequality above implies for all $\lambda \in \R$ that
\begin{equation}\label{eq:alpha_n_k_tau(J)-bound}
    \alpha_{n, k, \tau(J)} (\lambda) 
    \leq 2 k C_1 ^ 2 \lambda ^ 2.
\end{equation}

\textbf{Upper bound of} $\bm{\alpha_{n, k} (\lambda)}$.  For $0 < k < n/2$, as mentioned before, the distribution of $f(X_{k})$ is the uniform mixture of that of $f(X_{k, \tau(J)})$ for $J\in {[n-k] \choose k}$. It follows from Lemma \ref{lem:mixcon} and \eqref{eq:alpha_n_k_tau(J)-bound} that  
\begin{align}\label{eq:alpha_n_k-bound}
    \alpha_{n, k} (\lambda)
     &\leq \sup_{J \in \binom{[n - k]}{k}} \alpha_{n, k, \tau (J)} (\lambda)
    + \log_{} \mathbb{E}_{J\sim \mathcal{U}\big({[n-k] \choose k}\big)} 
    \left[ e ^ {\lambda \big( \E [f(X_{k, \tau(J)})] - \E [f(X_k)] \big)} \right]\notag\\
     & \leq 2 k C_1 ^ 2 \lambda ^ 2
     +\log_{} \mathbb{E}_{J\sim \mathcal{U}\bigl({[n-k] \choose k}\bigr)} 
    \left[ e ^ {\lambda \big( \E [f(X_{k, \tau(J)})] - \E [f(X_k)] \big)} \right].
\end{align}
Note that $\E_{J\sim \mathcal{U}\big({[n-k] \choose k}\big)}[\E [f(X_{k, \tau(J)})]]=\E[f(X_k)]$. Hence, the second summand is the logarithmic moment generating function of $g(J):=\E [f(X_{k, \tau(J)})]$ with $J\sim \mathcal{U}\big({[n-k] \choose k}\big)$. Let $X_{k, J}$ be a random variable uniformly distributed on $J^{(n)}$. 
For any $J, J' \in \binom{[n - k]}{k}$ such that $\left| J \Delta J' \right| = 2$, it follows from Conditions \ref{cond:BoundDiff} and \ref{cond:QuasiAP} that
\begin{align*}
    \left| g (J) - g (J') \right| 
    & \leq \left| \big( \E[f(X_{k, J})] - \E [f(X_{k, \tau(J)})] \big) 
    - \big( \E[f(X_{k, J'})] - \E [f(X_{k, \tau(J')})] \big) \right|  \\
    &~~~~+\left| \E[f(X_{k, J})] - \E[f(X_{k, J'})] \right| \\
    & \leq 2 C_1 + C_2.
\end{align*}
Hence, by Proposition \ref{prop:ConcentrationSlice}, we have for $r > 0$ that
\begin{align*}
    \mathbb{P} \left( \left| g (J) - \E_{J\sim \mathcal{U}\big({[n-k] \choose k}\big)} [g (J)] \right| \ge r \right)
    \leq 2 \exp \left( - \frac{ r ^ 2 }{ \min \left\{ k, n - 2 k \right\} (2 C_1 + C_2) ^ 2 } \right).
\end{align*}
Then we apply Proposition \ref{prop:subgaudef} to obtain for $\lambda \in \R$, 
\begin{equation*}
     \log_{} \mathbb{E}_{J\sim \mathcal{U}\big({[n-k] \choose k}\big)} 
    \left[ e ^ {\lambda \left( g(J) - \E_{J\sim \mathcal{U}\big({[n-k] \choose k}\big)} [g(J)] \right)} \right] \leq 4 \min \left\{ k, n - 2 k \right\} (2 C_1 + C_2) ^ 2 \lambda ^ 2.
\end{equation*}
Plugging this estimate into \eqref{eq:alpha_n_k-bound} to obtain that for $0 < k < n / 2$, we have
\begin{align}\label{eq:alpha_n_k-bound-1}
    \alpha_{n, k} (\lambda)
 \leq 2 k C_1 ^ 2 \lambda ^ 2
 + 4 \min \left\{ k, n - 2 k \right\} (2 C_1 + C_2) ^ 2 \lambda ^ 2.
\end{align}
It is not hard to see that $\alpha_{n, 0}(\lambda)=0$. If $n$ is even and $k = n / 2$, we have $\alpha_{n, \lfloor n/2\rfloor}(\lambda)\leq nC_1^2\lambda^2$. Hence, inequality \eqref{eq:alpha_n_k-bound-1} holds for all $0 \leq k \leq \lfloor n / 2 \rfloor$.

\textbf{Upper bound of} $\bm{\alpha_{n} (\lambda)}$. We denote by $\eta$ the distribution on $\{0, 1, \cdots, \lfloor n/2 \rfloor\}$ such that $\eta(k)={n-k\choose k}d_n^k/|\mathcal{SB}_n|$. As mentioned before, the distribution of $f(X)$ is the mixture of that of $f(X_{k})$ with weight $\eta(k)$. It follows from Lemma \ref{lem:mixcon} and \eqref{eq:alpha_n_k-bound-1} that  
\begin{align}\label{eq:alpha_n-bound}
    \alpha_{n} (\lambda)
     &\leq \sup_{0\leq k\leq \lfloor n/2 \rfloor} \alpha_{n, k} (\lambda)
    + \log_{} \mathbb{E}_{k\sim \eta} 
    \left[ e ^ {\lambda \big( \E [f(X_{k})] - \E [f(X)] \big)} \right]\notag\\
     & \leq 2 k C_1 ^ 2 \lambda ^ 2
 + 4 \min \left\{ k, n - 2 k \right\} (2 C_1 + C_2) ^ 2 \lambda ^ 2 +\log_{} \mathbb{E}_{k\sim \eta} 
    \left[ e ^ {\lambda \big( \E [f(X_{k})] - \E [f(X)] \big)} \right].
    \end{align}
The last summand is the logarithmic moment generating function of $\E [f(X_{k})]$ with $k\sim \eta$. It is known from Corollary \ref{coro:weightPolyaFrequence} that there exist independent Bernoulli random variables $\{Y_i\}_{0\leq i\leq \lfloor n/2 \rfloor}$ such that $\sum_{0\leq i\leq \lfloor n/2 \rfloor} Y_i\sim \eta$. Consider $F: \{0, 1\} ^ {\lfloor n/2 \rfloor } \to \R$ defined as
\begin{equation}\label{eq:def-F-b}
    F (y_1, \cdots, y_{\lfloor n/2 \rfloor}) 
    = \E [f (X_{y_1+\cdots+y_{\lfloor n/2\rfloor}})].
\end{equation}
Clearly, the distribution of $F(Y_1, \cdots, Y_{\lfloor n/2\rfloor})$ is identical to that of $\E [f(X_{k})]$ with $k\sim\eta$. We will upper-bound the logarithmic moment generating function of  $F(Y_1, \cdots, Y_{\lfloor n/2\rfloor})$. 

Recall that  $X_{k, \tau(J)}$ is uniformly distributed on $\tau(J)^{(n)}$ and $X_k$ is the uniform mixture of $X_{k, \tau(J)}$ for $J\sim \mathcal{U}\big({[n-k] \choose k}\big)$. 
For any $0 \leq k < \lfloor n/2 \rfloor$, we have
\begin{align} \label{eq:bound-diff-f}
    &~~~~\left| \E\left[f (X_{k+1})]  - \E[f(X_k)\right] \right| \notag\\
    & = \left| \E_{I \sim \mathcal{U}\big({[n-k-1] \choose k+1}\big)} \left[ \E [f(X_{k+1, \tau(I)})] \right] 
    - \E_{J \sim \mathcal{U}\big({[n-k] \choose k}\big)} \left[ \E [f(X_{k, \tau(J)})] \right] \right| \nonumber \\
    & \leq \left| \E_{I \sim \mathcal{U}\big({[n-k-1] \choose k+1}\big)} \left[ \E [f(X_{k+1, \tau(I)})] \right]
    - \E_{K \sim \mathcal{U}\big({[n-k-1] \choose k}\big)} \left[ \E [f(X_{k, \tau(K)})] \right] \right| \nonumber \\
    &~~~~ + \left| \E_{K \sim \mathcal{U}\big({[n-k-1] \choose k}\big)} \left[ \E [f(X_{k, \tau(K)})] \right]
    - \E_{J \sim\mathcal{U}\big({[n-k] \choose k}\big)} \left[ \E [f(X_{k, \tau(J)})] \right] \right|. 
\end{align}

We now establish the upper bound of the first summand. Given $I=\{i_1, \cdots, i_{k+1}\}\in {[n-k-1] \choose k+1}$. Here, we assume that $i_1<\cdots<i_k<i_{k+1}$. Then we define $I'=I\backslash \{i_{k+1}\}$. By Condition \ref{cond:BoundDiff}, we have
\begin{equation}\label{eq:bound-diff-f-term1-a}
\left| \E_{I \sim \mathcal{U}\big({[n-k-1] \choose k+1}\big)} \left[ \E [f(X_{k+1, \tau(I)})]-\E [f(X_{k, \tau(I')})] \right] \right|\leq C_1.
\end{equation}
Note that the random set $K\sim \mathcal{U}\big({n-k-1 \choose k}\big)$ can be obtained from the random set $I\sim \mathcal{U}\left({[n-k-1] \choose k+1}\right)$ by randomly deleting an element of $I$. Hence, for $I'$ and $K$ obtained from a given $I$, we have $|I'\Delta K|\leq 2$. Then we apply Conditions \ref{cond:BoundDiff} and  \ref{cond:QuasiAP} to obtain that
\begin{align}\label{eq:bound-diff-f-term1-b}
&~~~\left| \E_{I \sim \mathcal{U}\big({[n-k-1] \choose k+1}\big)} \left[ \E [f(X_{k, \tau(I')})] \right]
    - \E_{K \sim \mathcal{U}\big({[n-k-1] \choose k}\big)} \left[ \E [f(X_{k, \tau(K)})] \right] \right| \notag\\
    &\leq \left| \E_{I \sim \mathcal{U}\big({[n-k-1] \choose k+1}\big)} \left[ \E [f(X_{k, I'})] \right]
    - \E_{K \sim \mathcal{U}\big({[n-k-1] \choose k}\big)} \left[ \E [f(X_{k, K})] \right] \right|\notag\\
    &~~~+\left| \E_{I \sim \mathcal{U}\big({[n-k-1] \choose k+1}\big)} \Big[\left(\E [f(X_{k, I'})]-\E [f(X_{k, \tau(I')})]\right)+ \left(\E [f(X_{k, K})]-\E [f(X_{k, \tau(K)})]\right)\Big] \right|\notag\\
    &\leq 2C_1+C_2.
\end{align}
Putting together \eqref{eq:bound-diff-f-term1-a} and \eqref{eq:bound-diff-f-term1-b}, we have
\begin{align}\label{eq:bound-diff-f-term1}
    \left| \E_{I \sim \mathcal{U}\big({[n-k-1] \choose k+1}\big)} \left[ \E [f(X_{k+1, \tau(I)})] \right]
    - \E_{K \sim \mathcal{U}\big({[n-k-1] \choose k}\big)} \left[ \E [f(X_{k, \tau(K)})] \right] \right|
    \leq 3 C_1 + C_2. 
\end{align}

Now we establish the upper bound of the second summand. Note that the distribution $\mathcal{U}\big({[n-k] \choose k}\big)$ is the mixture of $\mathcal{U}\big({[n-k-1] \choose k}\big)$ and the distribution of $L\cup\{n-k\}$ where $L\sim \mathcal{U}\big({[n-k-1] \choose k-1}\big)$ with weights ${n-k-1 \choose k}/{n-k\choose k}=(n-2k)/(n-k)$ and ${n-k-1 \choose k-1}/{n-k\choose k}=k/(n-k)$, respectively. Also, a random set $L \sim \mathcal{U} \left( \binom{[n - k - 1]}{k - 1} \right)$ can be obtained from a random set $K \sim \mathcal{U} \left( \binom{[n - k - 1]}{k} \right)$ via deleting a random element of $K$. These observations and Conditions \ref{cond:BoundDiff} and \ref{cond:QuasiAP} yield
\begin{align} \label{eq:bound-diff-f-term2}
    &\ \ \ \left| \E_{K \sim \mathcal{U}\big({[n-k-1] \choose k}\big)} \left[ \E [f(X_{k, \tau(K)})] \right]
    - \E_{J \sim\mathcal{U}\big({[n-k] \choose k}\big)} \left[ \E [f(X_{k, \tau(J)})] \right] \right| \notag\\
    & = \left| \E_{K \sim \mathcal{U}\big({[n-k-1] \choose k}\big)} \left[ \E [f(X_{k, \tau(K)})] \right]
    - \frac{ n-2k }{ n-k } \cdot
    \E_{J \sim\mathcal{U}\big({[n-k-1] \choose k}\big)} \left[ \E [f(X_{k, \tau(J)})] \right] \right. \notag\\
    &~~~~~~~~~~\qquad \qquad \qquad \qquad \qquad \qquad 
    \left. - \frac{ k }{ n-k } 
    \cdot\E_{L \sim \mathcal{U}\big({[n-k-1] \choose k-1}\big)} \left[ \E [f(X_{k, \tau(L\cup\{n-k\})})] \right]  \right| \notag\\  
    & = \frac{ k }{ n - k }
    \left| \E_{K \sim \mathcal{U}\big({[n-k-1] \choose k}\big)} \left[ \E [f(X_{k, \tau(K)})] \right]
    - \E_{L \sim \mathcal{U}\big({[n-k-1] \choose k-1}\big)} \left[ \E [f(X_{k, \tau(L\cup\{n-k\})})] \right]  \right| \notag\\
    & \leq \frac{ k }{ n - k }
    \left| \E_{K \sim \mathcal{U}\big({[n-k-1] \choose k}\big)} \left[ \E [f(X_{k, K})] \right]
    - \E_{L \sim \mathcal{U}\big({[n-k-1] \choose k-1}\big)} \left[ \E [f(X_{k, L\cup\{n-k\}})] \right] \right| \notag\\
    &\ \ \ + \frac{ k }{ n - k }
    \left| \E_{K \sim \mathcal{U}\big({[n-k-1] \choose k}\big)} \left(\left[ \E [f(X_{k, K})] \right] - \left[ \E [f(X_{k, \tau(K)})] \right]\right) \right. \notag\\
    & \qquad \qquad~
    \left. - \E_{L \sim \mathcal{U}\big({[n-k-1] \choose k-1}\big)} \left(\left[ \E [f(X_{k, L\cup\{n-k\}})] \right] - \left[ \E [f(X_{k, \tau(L\cup\{n-k\})})] \right]\right)\right| \notag\\ 
    & \leq 2 C_1 + C_2.
\end{align}
Putting together \eqref{eq:bound-diff-f}, \eqref{eq:bound-diff-f-term1} and \eqref{eq:bound-diff-f-term2}, we have
\begin{align*}
    \left| \E\left[f (X_{k+1})]  - \E[f(X_k)\right] \right|
    \leq 5 C_1 + 2 C_2.
\end{align*}
By the definition of $F$ in \eqref{eq:def-F-b}, we have for $y_1, \cdots, y_{i - 1}, y_i, y_i', y_{i + 1}, \cdots, y_{\lfloor n/2 \rfloor } \in \{0, 1\}$ that
\begin{align*}
    \left| F \left( y_1, \cdots, y_i,\cdots, y_{\lfloor n/2 \rfloor } \right) 
    - F \left( y_1, \cdots, y_i', \cdots, y_{\lfloor n/2 \rfloor } \right)  \right| 
    \leq 5 C_1 + 2 C_2.
\end{align*}
By Proposition \ref{prop:boundiff}, for $r > 0$,
\begin{align*}
    \mathbb{P} \left( \left| F \left( Y_1, \cdots, Y_{\lfloor n/2 \rfloor} \right) 
    - \E \left[ F \left( Y_1, \cdots, Y_{\lfloor n/2 \rfloor} \right) \right]  \right| \ge r \right)
    \leq 2 \exp \left( - \frac{ 2 r ^ 2 }{ \lfloor n/2 \rfloor (5 C_1 + 2 C_2) ^ 2 } \right).
\end{align*}
This deviation inequality and Proposition \ref{prop:subgaudef} imply for $\lambda \in \R$ that
\begin{align*}
\log_{} \mathbb{E}_{k\sim \eta} 
    \left[ e ^ {\lambda \big( \E [f(X_{k})] - \E [f(X)] \big)} \right]\leq n (5 C_1 + 2 C_2) ^ 2 \lambda ^ 2.
\end{align*}
Plugging this inequality into \eqref{eq:alpha_n-bound}, we obtain for all $\lambda \in \R$ that
\begin{align*}
    \alpha_n (\lambda)
    & \leq \sup_{0 \leq k \leq \left\lfloor \frac{ n }{ 2 } \right\rfloor} \left( 2 k C_1 ^ 2 \lambda ^ 2
    + 4 \min \left\{ k, n - 2 k \right\} (2 C_1 + C_2) ^ 2 \lambda ^ 2 \right) 
    + n (5 C_1 + 2 C_2) ^ 2 \lambda ^ 2 \\
    & \leq \frac{ K n }{ 4 } (C_1 + C_2) \lambda ^ 2,
\end{align*}
where $K$ is an absolute constant. Then we apply Proposition \ref{prop:subgaudef} to obtain for all $r > 0$ that
\begin{align*}
    \mathbb{P} \left( \left| f (X) - \E [ f (X) ] \right| \ge r \right)
    \leq 2 \exp \left( - \frac{ r ^ 2 }{ K n (C_1 + C_2) ^ 2 } \right).
\end{align*}
This completes the proof.
\end{proof}

\subsection{Proof of Theorem \ref{thm:ConcentrationCorePartition}}

In this subsection, we provide the proof of Theorem \ref{thm:ConcentrationCorePartition}. 

\begin{proof}[Proof of Theorem \ref{thm:ConcentrationCorePartition}] 

Consider $f: [d_n] ^ {n - 1} \to \R$, such that $f (x_1, \cdots, x_{n - 1}) = x_1 + \cdots + x_{n - 1}$. Let $C_1 = d_n$ and $C_2 = 0$. Then $f$ satisfies Conditions \ref{cond:BoundDiff} and \ref{cond:QuasiAP} in Theorem \ref{thm:concengeneral}. So for $r > 0$,
\begin{align*}
    \mathbb{P} \left( \left| L_{n, 2} - \mathbb{E} \left[ L_{n, 2} \right] \right| \ge r  \right)
    \leq 2 \exp \left( - C \frac{ r ^ 2 }{ n d_n ^ 2 } \right).
\end{align*}Similarly, we can prove \eqref{eq:concentilln}. Lemma \ref{lem:SelfConjugateDistribution} and Proposition \ref{prop:boundiff} immediately give \eqref{eq:concentilln2}.

Consider $f: [d_n] ^ {n - 1} \to \R$, such that
\begin{align*}
    f (x_1, \cdots, x_{n - 1}) 
    = \sum_{i = 1}^{n - 1} 
    \left( \frac{n - 1}{2} x_i^2 + \left( i - \frac{n - 1}{2} \right) x_i  \right) 
    - \sum_{1 \leq i < j \leq n - 1}^{} x_i x_j.
\end{align*}For any $x_1, \cdots, x_i, x_i', \cdots, x_{n - 1} \in [d_n]$,
\begin{align*}
    &\ \ \ \left| f (x_1, \cdots, x_{n - 1}) - f (x_1, \cdots, x_i, x_i', \cdots, x_{n - 1}) \right|  \\
    & = \left| \frac{n - 1}{2} (x_i ^ 2 - {x_i'} ^ 2) 
    + \left( i - \frac{n - 1}{2} \right) (x_i - x_i')
    - (x_i - x_i') \sum_{j \neq i}^{} x_j \right| \\
    & \leq \frac{ n - 1 }{ 2 } d_n ^ 2 + \frac{ n - 1 }{ 2 } d_n + (n - 2) d_n ^ 2 \\
    & \leq 2 n d_n ^ 2.
\end{align*}Let $C_1 = 2 n d_n ^ 2$ and $C_2 = 0$. Then $f$ satisfies Conditions \ref{cond:BoundDiff} and \ref{cond:QuasiAP} in Theorem \ref{thm:concengeneral}. For $r > 0$,
\begin{align*}
    \mathbb{P} \left( \left| S_{n, 2} - \mathbb{E} \left[ S_{n, 2} \right] \right| \ge r \right)
    \leq 2 \exp \left( - C \frac{ r ^ 2 }{ n ^ 3 d_n ^ 4 } \right).
\end{align*}Similarly we can prove \eqref{eq:concentilsn}. Lemma \ref{lem:SelfConjugateDistribution} and Proposition \ref{prop:boundiff} directly imply \eqref{eq:concentilsn2}.
\end{proof}

\section*{Acknowledgments}
This work was supported by the National Science Foundation of China grants 12201155 and 62201175. 

\section*{Author Contributions}

The second author developed the main ideas and techniques to prove the main results in this paper and wrote the primary manuscript. The first author assisted in reviewing the manuscript and modified Step 5 in the proof of Theorem \ref{thm:normalgeneral}. The third author introduced the problem to the second author, helped review the manuscript, and provided the numerical results in the appendix.

\bibliographystyle{plain}
\bibliography{Stein2024}

\appendix

\section{Numerical Simulation}

Here we provide some numerical results obtained by \sl{Python} \cite{van1995python}, which support our asymptotic normality results.
For positive integers $k$ and $d$,  we define the $k$-th standardized moment of  $L_{n, 1}, ~L_{n, 2}, ~S_{n, 1}, ~S_{n, 2}$ with $D_n=dn$ as  $m_{k,d}(L_{n, 1}), ~m_{k,d}(L_{n, 2}), ~m_{k,d}(S_{n, 1}), ~m_{k,d}(S_{n, 2})$, respectively.  Then we have the following numerical results in Tables \ref{tab:1},\ref{tab:2},\ref{tab:3} and \ref{tab:4}.  Notice that the $k$-th moments ($k\geq 3$) of the standard normal distribution are $0,3,0,15,0,105,\cdots$.  The sequences $m_{k,d}(L_{n, 1}), ~m_{k,d}(L_{n, 2}), ~m_{k,d}(S_{n, 1}), ~m_{k,d}(S_{n, 2})$ all seem tend to this sequence when $n\to\infty$.
\begin{table}[htpb!]
\caption{The $k$-th standardized moment $m_{k,3}(L_{n, 1})$ for $3\leq k \leq 8$.}
\begin{center}
\resizebox{0.999\linewidth}{!}{
\begin{tabular}{|c|c|c|c|c|c|c|c|c|c|c|}
\hline
\diagbox{k}{n}
&$5$&$6$&$7$&$8$&$9$&$10$&$11$&$12$&$13$&$14$
\\
\hline
$3$&$0.000$&$0.000$&$0.000$&$0.000$&$0.000$&$0.000$&$0.000$&$0.000$&$0.000$&$0.000$
\\
\hline
$4$&$2.660$&$2.728$&$2.773$&$2.806$&$2.830$&$2.849$&$2.864$&$2.876$&$2.887$&$2.895$
\\
\hline
$5$&$0.000$&$0.000$&$0.000$&$0.000$&$0.000$&$0.000$&$0.000$&$0.000$&$0.000$&$0.000$
\\
\hline
$6$&$10.420$&$11.253$&$11.831$&$12.256$&$12.580$&$12.836$&$13.043$&$13.214$&$13.358$&$13.480$
\\
\hline
$7$&$0.000$&$0.000$&$0.000$&$0.000$&$0.000$&$0.000$&$0.000$&$0.000$&$0.000$&$0.000$
\\
\hline
$8$&$50.458$&$58.893$&$65.152$&$ 69.949$&$73.733$&$76.789$&$79.305$&$81.413$&$83.203$&$84.741$
\\
\hline
\end{tabular}\label{tab:1}
}
\end{center}
\end{table}

\begin{table}[htpb!]
\caption{The $k$-th standardized moment $m_{k,3}(S_{n, 1})$ for $3\leq k \leq 8$.}
\begin{center}
\resizebox{0.999\linewidth}{!}{
\begin{tabular}{|c|c|c|c|c|c|c|c|c|c|c|}
\hline
\diagbox{k}{n}
&$5$&$6$&$7$&$8$&$9$&$10$&$11$&$12$&$13$&$14$
\\
\hline
$3$&$-0.087$&$-0.042$&$-0.003$&$ 0.028$&$0.051$&$0.067$&$0.079$&$0.088$&$ 0.094$&$0.098$
\\
\hline
$4$&$2.639$&$2.791$&$2.865$&$2.901$&$2.919$&$2.929$&$2.934$&$2.938$&$2.941$&$2.943$
\\
\hline
$5$&$0.114$&$0.010$&$0.186$&$0.362$&$0.516$&$0.641$&$0.739$&$0.815$&$0.872$&$0.914$
\\
\hline
$6$&$9.952$&$11.497$&$12.450$&$13.124$&$13.515$&$13.764$&$13.928$&$14.042$&$14.123$&$14.185$
\\
\hline
$7$&$1.152$&$1.956$&$3.124$&$4.329$&$5.444$&$6.414$&$7.226$&$7.886$&$8.413$&$8.826$
\\
\hline
$8$&$45.130$&$59.368$&$69.948$&$ 77.485$&$82.822$&$86.621$&$89.358$&$ 91.368$&$92.878$&$94.041$
\\
\hline
\end{tabular}\label{tab:2}
}
\end{center}
\end{table}

\begin{table}[htpb!]
\caption{The $k$-th standardized moment $m_{k,2}(L_{n, 2})$ for $3\leq k \leq 8$.}
\begin{center}
\resizebox{0.999\linewidth}{!}{
\begin{tabular}{|c|c|c|c|c|c|c|c|c|c|c|}
\hline
\diagbox{k}{n}
&$8$&$9$&$10$&$11$&$12$&$13$&$14$&$15$&$16$&$17$
\\
\hline
$3$&$0.065$&$-0.003$&$0.033$&$0.007$&$0.019$&$0.010$&$0.013$&$0.009$&$0.010$&$0.008$
\\
\hline
$4$&$2.821$&$2.716$&$2.832$&$2.793$&$2.845$&$2.836$&$2.860$&$2.862$&$2.875$&$2.880$
\\
\hline
$5$&$0.669$&$-0.118$&$0.368$&$0.024$&$0.210$&$0.071$&$0.134$&$0.080$&$0.097$&$0.075$
\\
\hline
$6$&$12.400$&$10.995$&$12.658$&$11.988$&$12.824$&$12.601$&$13.013$&$12.989$&$13.206$&$13.252$
\\
\hline
$7$&$6.101$&$-1.644$&$3.853$&$-0.241$&$2.303$&$0.416$&$1.455$&$0.643$&$1.017$&$0.673$
\\
\hline
$8$&$70.001$&$55.433$&$74.802$&$ 65.958$&$77.124$&$73.279$&$79.261$&$78.216$&$81.446$&$81.640$
\\
\hline
\end{tabular}\label{tab:3}
}
\end{center}
\end{table}

\begin{table}[htpb!]
\caption{The $k$-th standardized moment $m_{k,2}(S_{n, 2})$ for $3\leq k \leq 8$.}
\begin{center}
\resizebox{0.999\linewidth}{!}{
\begin{tabular}{|c|c|c|c|c|c|c|c|c|c|c|}
\hline
\diagbox{k}{n}
&$8$&$9$&$10$&$11$&$12$&$13$&$14$&$15$&$16$&$17$
\\
\hline
$3$&$-0.052$&$-0.060$&$ -0.070$&$-0.074$&$-0.079$&$-0.081$&$-0.083$&$-0.085$&$-0.085$&$ -0.086$
\\
\hline
$4$&$2.326$&$2.404$&$2.463$&$2.514$&$2.555$&$2.590$&$2.621$&$2.647$&$2.670$&$2.690$
\\
\hline
$5$&$-0.141$&$-0.229$&$-0.334$&$-0.400$&$ -0.463$&$ -0.508$&$ -0.546$&$-0.576$&$ -0.601$&$ -0.620$
\\
\hline
$6$&$7.21$&$7.851$&$8.394$&$8.884$&$ 9.311$&$9.692$&$10.028$&$10.328$&$10.596$&$10.837$
\\
\hline
$7$&$-0.210$&$-0.711$&$ -1.387$&$ -1.909$&$-2.433$&$-2.874$&$-3.277$&$-3.626$&$-3.935$&$-4.203$
\\
\hline
$8$&$26.277$&$30.363$&$34.193$&$ 37.841$&$41.260$&$44.455$&$ 47.426$&$50.181$&$52.733$&$55.097$
\\
\hline
\end{tabular}\label{tab:4}
}
\end{center}
\end{table}

We also provide some numerical results for $M_{n, 3}^{(k)}$.
For positive integers $k'$ and $d$,  we define the $k'$-th standardized moment of  $M_{n, 3}^{(k)}$ with $E_n=2dn$ as  $m_{k',d}(M_{n, 3}^{(k)})$.  Then we have the following numerical results in Tables \ref{tab:5},\ref{tab:6},\ref{tab:7} and \ref{tab:8}.  Notice that the $k'$-th moments ($k'\geq 3$) of the standard normal distribution are $0,3,0,15,0,105,\cdots$.  The sequences $m_{k',d}(M_{n, 3}^{(k)})$ also seem to tend to this sequence when $n\to\infty$.

\begin{table}[htpb!]
\caption{the $k'$-th standardized moment $m_{k',2}(M_{n, 3}^{(0)})$ for $3\leq k' \leq 8$.}
\begin{center}
\resizebox{0.999\linewidth}{!}{
\begin{tabular}{|c|c|c|c|c|c|c|c|c|c|c|}
\hline
\diagbox{$k'$}{n}
&$6$&$7$&$8$&$9$&$10$&$11$&$12$&$13$&$14$&$15$
\\
\hline
$3$&$-0.198$&$-0.198$&$-0.171$&$-0.171$&$ -0.153$&$-0.153$&$-0.140$&$-0.140$&$-0.129$&$-0.129$
\\
\hline
$4$&$2.615$&$2.615$&$2.711$&$2.711$&$2.769$&$2.769$&$2.807$&$2.807$&$2.835$&$2.835$
\\
\hline
$5$&$-1.623$&$-1.623$&$1.483$&$1.483$&$-1.369$&$-1.369$&$1.275$&$1.275$&$-1.197$&$-1.197$
\\
\hline
$6$&$10.196$&$10.196$&$11.290$&$11.290$&$11.981$&$11.981$&$12.456$&$12.456$&$12.802$&$12.802$
\\
\hline
$7$&$-11.998$&$-11.998$&$-12.074$&$-12.074$&$-11.768$&$-11.768$&$-11.357$&$-11.357$&$-10.933$&$-10.933$
\\
\hline
$8$&$50.891$&$50.891$&$61.705$&$ 61.705$&$69.008$&$69.008$&$74.235$&$74.235$&$78.149$&$78.149$
\\
\hline
\end{tabular}\label{tab:5}
}
\end{center}
\end{table}

\begin{table}[htpb!]
\caption{the $k'$-th standardized moment $m_{k',2}(M_{n, 3}^{(1)})$ for $3\leq k' \leq 8$.}
\begin{center}
\resizebox{0.999\linewidth}{!}{
\begin{tabular}{|c|c|c|c|c|c|c|c|c|c|c|}
\hline
\diagbox{$k'$}{n}
&$6$&$7$&$8$&$9$&$10$&$11$&$12$&$13$&$14$&$15$
\\
\hline
$3$&$0.353$&$0.377$&$ 0.307$&$0.323$&$ 0.276$&$0.287$&$0.252$&$0.260$&$0.233$&$0.240$
\\
\hline
$4$&$2.659$&$2.687$&$2.745$&$2.761$&$2.797$&$2.806$&$2.831$&$2.837$&$2.855$&$2.860$
\\
\hline
$5$&$2.746$&$2.957$&$2.560$&$2.702$&$2.387$&$2.491$&$2.238$&$2.318$&$2.111$&$2.176$
\\
\hline
$6$&$11.114$&$11.577$&$12.105$&$12.383$&$12.694$&$12.879$&$13.084$&$13.216$&$13.361$&$13.460$
\\
\hline
$7$&$19.358$&$21.334$&$20.196$&$21.571$&$20.057$&$21.080$&$19.579$&$20.378$&$18.991$&$19.636$
\\
\hline
$8$&$61.427$&$ 67.070$&$ 73.041$&$ 76.845$&$80.013$&$82.707$&$84.581$&$86.577$&$87.778$&$89.312$
\\
\hline
\end{tabular}\label{tab:6}
}
\end{center}
\end{table}

\begin{table}[htpb!]
\caption{the $k'$-th standardized moment $m_{k',2}(M_{n, 3}^{(2)})$ for $3\leq k' \leq 8$.}
\begin{center}
\resizebox{0.999\linewidth}{!}{
\begin{tabular}{|c|c|c|c|c|c|c|c|c|c|c|}
\hline
\diagbox{$k'$}{n}
&$6$&$7$&$8$&$9$&$10$&$11$&$12$&$13$&$14$&$15$
\\
\hline
$3$&$0.596$&$0.624$&$0.519$&$0.537$&$  0.465$&$0.478$&$0.425$&$ 0.435$&$ 0.394$&$0.402$
\\
\hline
$4$&$2.827$&$2.859$&$ 2.871$&$2.890$&$2.898$&$2.909$&$2.915$&$ 2.923$&$2.927$&$2.933$
\\
\hline
$5$&$4.546$&$4.815$&$4.258$&$4.435$&$3.983$&$4.110$&$3.742$&$3.839$&$3.535$&$3.612$
\\
\hline
$6$&$14.094$&$14.848$&$14.684$&$15.125$&$14.930$&$15.217$&$15.044$&$15.246$&$15.101$&$15.251$
\\
\hline
$7$&$33.447$&$36.560$&$34.658$&$36.693$&$34.295$&$35.747$&$33.401$&$34.500$&$32.347$&$33.215$
\\
\hline
$8$&$96.178$&$106.877$&$108.242$&$ 115.207$&$113.413$&$118.220$&$115.635$&$119.130$&$116.501$&$119.148$
\\
\hline
\end{tabular}\label{tab:7}
}
\end{center}
\end{table}

\begin{table}[htpb!]
\caption{the $k'$-th standardized moment $m_{k',2}(M_{n, 3}^{(3)})$ for $3\leq k' \leq 8$.}
\begin{center}
\resizebox{0.999\linewidth}{!}{
\begin{tabular}{|c|c|c|c|c|c|c|c|c|c|c|}
\hline
\diagbox{$k'$}{n}
&$6$&$7$&$8$&$9$&$10$&$11$&$12$&$13$&$14$&$15$
\\
\hline
$3$&$0.761$&$0.779$&$0.663$&$0.674$&$ 0.595$&$0.603$&$0.544$&$0.550$&$0.504$&$0.509$
\\
\hline
$4$&$ 2.956$&$2.980$&$2.969$&$2.982$&$2.976$&$2.984$&$ 2.980$&$2.986$&$2.983$&$2.987$
\\
\hline
$5$&$5.607$&$ 5.860$&$5.302$&$5.457$&$4.988$&$5.094$&$4.705$&$4.783$&$4.456$&$4.517$
\\
\hline
$6$&$16.213$&$17.044$&$16.635$&$17.093$&$16.679$&$16.964$&$16.612$&$16.805$&$16.513$&$16.653$
\\
\hline
$7$&$41.818$&$45.359$&$43.576$&$45.743$&$43.281$&$44.742$&$42.260$&$43.315$&$40.999$&$41.801$
\\
\hline
$8$&$119.635$&$132.780$&$133.606$&$ 141.844$&$138.630$&$144.102$&$139.856$&$143.697$&$139.432$&$142.255$
\\
\hline
\end{tabular}\label{tab:8}
}
\end{center}
\end{table}

\end{document}